\def\amsbb{\use@mathgroup \M@U \symAMSb}
\newtheorem{theorem}{Theorem}[section]
\newtheorem{lemma}[theorem]{Lemma}
\theoremstyle{definition}
\newtheorem{definition}[theorem]{Definition}
\newtheorem{remark}[theorem]{Remark}
\numberwithin{equation}{section}
\newtheorem*{theorem*}{Theorem}
{{\sc Proof of Lemma~\ref{tri1}.}}%
{{\qed} \\}
{{\sc Proof of Theorem~\ref{regularity}.}}%
{{\qed} \\}
{{\sc Proof of Theorem~\ref{main}.}}%
{{\qed} \\}
\newenvironment{proofof(i)}%
    {{\sc Proof of $(i)$.}}%
  {{\qed} \\}  
  \newenvironment{proofof(iv)}%
    {{\sc Proof of $(iv)$.}}%
  {{\qed} \\}  
\newcommand{\ms}{{\bf H}}
\newcommand{\bms}{\overline{\bf H}}
\newcommand{\R}{\mathbb R}
\newcommand{\N}{\mathbb N}
\newcommand{\C}{\mathbb C}
\title{Regularity of Harmonic Maps into Teichmüller Space}
\author{Yitong Sun}
\begin{document}
\maketitle

\begin{abstract}
We prove a regularity theorem for harmonic maps into Teichmüller space. More specifically, if $u$ is a harmonic map from a Riemannian domain to the metric completion of Teichmüller space with respect to the Weil-Petersson metric, and the image of $u$ intersects a stratum of the augmented Teichmüller space, then $u$ is entirely contained in this stratum. This extends Wolpert's result on the geodesic convexity of the augmented Teichmüller space to higher dimensions and generalizes the regularity result of Daskalopoulos and Mese by showing that the singular set of $u$ is empty. 
\end{abstract}

\section{Introduction}

Teichm\"uller space has been a subject of intense interest to many mathematicians since its introduction in the 1940s. Complex analytic foundations were laid by L.~Ahlfors, L.~Bers, H.~Royden, and S.~Earle, among others. Later, W.~Thurston revolutionized the field by connecting Teichm\"uller space with hyperbolic geometry. The introduction of the Weil--Petersson metric endowed Teichm\"uller space with rich geometric and analytic structures. The Weil--Petersson metric on Teichmüller space provides a deep connection between the complex analytic structure of moduli spaces and the hyperbolic geometry of surfaces. They were extensively studied by A.~Weil, S.~Wolpert, H.~Masur, W.~Harvey, Y.~Minsky, C.~McMullen, J.~Brock, F.~Gardiner, L.~Keen, and many others. The use of harmonic map theory to study its global structure has led to deep results in compactification theory and rigidity, as seen in the works of M.~Wolf, Y.~Minsky, S.~Yamada, R.~Wentworth, G.~Daskalopoulos, C.~Mese, and others.

We focus on harmonic maps into the Weil--Petersson metric completion of Teichm\"uller space $\overline{\mathcal{T}}$ (cf. Daskalopoulos -- Mese \cite{1}). Wolpert \cite{5} showed that Teichmüller space endowed with Weil--Petersson metric completion is geodesically convex. Teichmüller space $\mathcal{T},$ which parametrizes complex structures on an oriented surface of genus $g$ with $p$ marked points, becomes an incomplete and non-positive curvature (NPC) space when equipped with the Weil--Petersson (WP) metric. Its metric completion $(\overline{\mathcal T},d_{wp})$ -- which includes nodal surfaces where simple closed curves are pinched -- is an NPC metric space satisfying CAT(0) property. The augmented Teichmüller space $\overline{\mathcal T}$ is a stratified space where each lower-dimensional open stratum $\mathcal{T}'$ parametrizes surfaces derived from the original oriented surface with a number of nodes. Each $\mathcal{T}'$ is a product of lower-dimensional Teichmüller spaces. Given two points in a stratum $\mathcal T'$, the geodesic connecting them is contained in $\mathcal T'$. In other words, if a geodesic curve $\gamma$ intersects a stratum $\mathcal T'$ of $\overline{\mathcal T}$, then $\gamma \subset \mathcal T'$. 

We generalize Wolpert's result by proving the same result for harmonic maps. Specifically, we establish that if a harmonic map from a Riemannian domain into $\overline{\mathcal{T}}$ intersects a lower-dimensional open stratum, then its entire image lies within that stratum. The main result of this paper is the following statement.
\begin{theorem}\label{mainresult}
Let $\Omega$ be a Riemannian domain, $\overline{\mathcal{T}}$ be the metric completion of $\mathcal{T}$ with respect to the Weil--Petersson metric, $u : \Omega \to \overline{\mathcal{T}}$ be a harmonic map, and $\mathcal{T}'$ be a stratum of $\overline{\mathcal{T}}$. If $u(\Omega) \cap \mathcal{T}' \neq \emptyset$, then $u(\Omega) \subset \mathcal{T}'$.
\end{theorem}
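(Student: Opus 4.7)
The plan is to extend Wolpert's argument for geodesic convexity, which rests on the strict convexity of $\sqrt{\ell_\alpha}$ along Weil--Petersson geodesics and the singular behaviour of its Hessian near $\{\ell_\alpha=0\}$, to the harmonic map setting via a Bochner-type identity. Let $\sigma$ denote the set of simple closed curves pinched in $\mathcal{T}'$, so that the closed stratum is $\overline{\mathcal{T}'}=\bigcap_{\alpha\in\sigma}\{\ell_\alpha=0\}\cap \overline{\mathcal{T}}$.

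The first step is to prove $u(\Omega)\subset\overline{\mathcal{T}'}$. Fix $\alpha\in\sigma$ and set $h=\sqrt{\ell_\alpha}\circ u$; this is continuous, non-negative, and subharmonic on $\Omega$, because Wolpert's convexity of $\sqrt{\ell_\alpha}$ on $\overline{\mathcal{T}}$ combines with harmonicity of $u$ into a CAT(0) target. Bare subharmonicity with an interior zero does not, in general, force identical vanishing (consider $x\mapsto x^2$), so one needs more. Using Wolpert's precise asymptotic expansion of the Hessian of $\sqrt{\ell_\alpha}$ near the pinching locus, a Bochner-type computation on the smooth locus of $u$ yields a differential inequality to which a strong minimum principle applies: $h(x_0)=0$ forces $h\equiv 0$ in a neighbourhood. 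Combining this with the Daskalopoulos--Mese regularity theorem (the singular set of $u$ has Hausdorff codimension $\ge 2$), a removable-singularity / Sobolev capacity argument across that singular set, and the connectedness of $\Omega$, the vanishing of $h$ at $x_0\in u^{-1}(\mathcal{T}')$ propagates to all of $\Omega$. Applying this to every $\alpha\in\sigma$ yields $u(\Omega)\subset\overline{\mathcal{T}'}$.

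The second step upgrades $u(\Omega)\subset\overline{\mathcal{T}'}$ to $u(\Omega)\subset\mathcal{T}'$. Since $\overline{\mathcal{T}'}$ is itself the Weil--Petersson metric completion of a product of lower-dimensional Teichmüller spaces, $u$ viewed as a map $\Omega\to\overline{\mathcal{T}'}$ is again a harmonic map into such a completion, and the same machinery applies. Any deeper stratum $\mathcal{T}''\subsetneq\overline{\mathcal{T}'}$ is cut out by the vanishing of additional length functions $\ell_\beta$ for curves $\beta\notin\sigma$; since $\ell_\beta(u(x_0))>0$, the contrapositive of Step~1 prevents $u$ from reaching $\mathcal{T}''$, and we conclude $u(\Omega)\subset\mathcal{T}'$.

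The principal obstacle is the rigorous establishment of Step~1's strong minimum principle in the presence of the singular loci of both $u$ and the target. One must carefully extract Wolpert's Hessian asymptotics for $\sqrt{\ell_\alpha}$ in a form that survives pullback by a map only known to be smooth off a codimension-two set, and then verify that this small singular set cannot mediate a transition between $\{h>0\}$ and $\{h=0\}$---essentially a null-capacity statement that is the technical heart of the proof.
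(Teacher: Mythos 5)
Your route through $h=\sqrt{\ell_\alpha}\circ u$ is genuinely different from the paper's, but it founders on exactly the step you flag as the crux: no strong minimum principle is available for $h$. Wolpert's convexity of $\sqrt{\ell_\alpha}$ together with harmonicity of $u$ into an NPC target does give weak subharmonicity, $\Delta h\geq 0$, but that is the wrong sign for a minimum principle, and the Hessian asymptotics of $\sqrt{\ell_\alpha}$ do not repair this. In the model coordinates $(\rho,\phi)$ of Section~\ref{sec:modelspace}, where $\sqrt{\ell_\alpha}$ is comparable to $\rho$ and the metric is $d\rho^2+\rho^6d\phi^2$, the harmonic map equation reads $u^{\rho}\,\Delta u^{\rho}=3(u^{\rho})^6|\nabla u^{\phi}|^2$ (cf.\ the proof of Lemma~\ref{lem:pwfnc0}), so the only a priori control coming from the interior Lipschitz bound $(u^{\rho})^6|\nabla u^{\phi}|^2\leq |du|^2\leq C$ is $\Delta h\leq C/h$. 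This inequality degenerates precisely on $\{h=0\}$ and is compatible with profiles $h(x)\sim|x-x_0|^{\alpha}$ for any $\alpha>0$ --- which is exactly the order-$\alpha$ behaviour of a tangent map that must be excluded. A minimum principle of the kind you invoke would require $\Delta h\leq Ch$ or at least an Osgood-type condition, neither of which holds; so ``$h(x_0)=0$ forces $h\equiv 0$ nearby'' is the theorem to be proved, not a consequence of a Bochner identity. (The codimension-two singular set and the capacity issue you raise are real but secondary; they do not touch this obstruction.) The same unproved minimum principle is silently reused in your Step~2 for the deeper length functions $\ell_\beta$, so the gap propagates there as well.

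This obstruction is the reason the paper proceeds entirely differently: assuming a singular point exists, it blows up there, shows the tangent map is a nonconstant homogeneous map that is \emph{piecewise a function} into $\overline{\bf H}_A$ (Lemmas~\ref{lem:pwfnc0}--\ref{|A|}), approximates the blow-up maps uniformly by the configurations $\Lambda_{\sigma_i}$ built from symmetric geodesics and horizontal rays (Lemma~\ref{linearapproxblowupsym0}), and then runs the geometric iteration with the convex sets ${\bf H}[\rho_0/2]$ to obtain $d_{\overline{\bf H}}(u_k(0),{\bf H}[\rho_0/2])<r/2^{k+2}$, contradicting $d_{\overline{\bf H}}(P_0,{\bf H}[\rho_0/2])=\rho_0/2$. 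For Theorem~\ref{mainresult} itself the paper simply takes $\mathcal{T}'$ to be the highest-dimensional stratum met by the image and reruns this machinery with the decomposition $u=(V,v)$ and the asymptotically harmonic approximations of Sections~\ref{sec:blowup1}--\ref{sec:blowup2}. If you want to salvage a convexity-based argument, you would need to supply a quantitative substitute for the minimum principle --- which is, in effect, what the iteration scheme of Sections~\ref{sec:blowup0}--\ref{sec:blowup} provides.
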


A key step of the proof of Theorem \ref{mainresult} is the following theorem:
\begin{theorem}\label{nosing}
    Let $u: \Omega \to \overline{\mathcal{T}}$ be a harmonic map from a Riemannian domain to the metric completion of $\mathcal{T}$ endowed with Weil--Petersson metric. If $u$ intersects $\mathcal{T}$ at some point, then $u$ has no singular points and is, in fact, smooth harmonic map into $\mathcal{T}.$
\end{theorem}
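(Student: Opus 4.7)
The plan is to show, on a connected Riemannian domain $\Omega$ (to which we may reduce), that the regular set $R := u^{-1}(\mathcal{T}) \subset \Omega$ is both open and closed; combined with the hypothesis $R \neq \emptyset$, this forces $R = \Omega$. Once $u(\Omega) \subset \mathcal{T}$ is known, $u$ is a harmonic map from $\Omega$ into the smooth Riemannian manifold $(\mathcal{T}, g_{wp})$, so standard elliptic bootstrapping on the harmonic map system gives $u \in C^\infty(\Omega, \mathcal{T})$ and the singular set is automatically empty. Openness of $R$ is routine: $u$ is continuous by Korevaar--Schoen theory, and $\mathcal{T}$ is open in $\overline{\mathcal{T}}$. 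The substance of the theorem is closedness.

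For closedness, suppose toward a contradiction that some $x_0 \in \overline{R} \setminus R$ satisfies $u(x_0) \in \mathcal{T}'$, a lower-dimensional stratum obtained by pinching a nonempty collection of disjoint simple closed curves $\alpha_1, \dots, \alpha_k$. The approach I would take combines Wolpert's geometric analysis of the boundary of $\overline{\mathcal{T}}$ with a blow-up analysis in the tradition of Gromov--Schoen and Daskalopoulos--Mese. Wolpert's asymptotic expansion of the Weil--Petersson metric near $\mathcal{T}'$ (schematically, a Riemannian product of the metric on $\mathcal{T}'$ with model pinching cones $d\rho_j^2 + \rho_j^6 \, d\theta_j^2$, where $\rho_j \propto \sqrt{\ell_{\alpha_j}}$) identifies the tangent cone $T_{u(x_0)} \overline{\mathcal{T}}$ as a product of the Euclidean factor $T_{u(x_0)} \mathcal{T}'$ with $k$ model cones carrying concentrated negative curvature at their apices. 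Using monotonicity of the Almgren order for NPC-valued harmonic maps, one extracts a tangent map $u_\star$ at $x_0$ with values in this tangent cone. The rigidity step is to show that $u_\star$ is forced to take values in the stratum factor $T_{u(x_0)} \mathcal{T}' \times \{\text{apices}\}$, using the strict Wolpert convexity of each $\sqrt{\ell_{\alpha_j}}$ together with the curvature asymptotics at the apices. This would imply $u$ maps a neighborhood of $x_0$ tangentially into $\mathcal{T}'$, contradicting $x_0 \in \overline{R}$.

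The principal obstacle is precisely this rigidity step. A tempting shortcut via length functions --- $f_\alpha := \sqrt{\ell_\alpha} \circ u$ is nonnegative, weakly subharmonic on $\Omega$ (since $\sqrt{\ell_\alpha}$ is Lipschitz geodesically convex on $\overline{\mathcal{T}}$, and Korevaar--Schoen pulls back convex Lipschitz functions to subharmonic ones under harmonic maps into NPC targets), and vanishes at $x_0$ --- is insufficient to conclude $f_\alpha \equiv 0$, since subharmonic functions may have isolated interior zeros, as in $f(x) = |x|^2$ on $\mathbb{R}^n$. Upgrading to genuine rigidity at $x_0$ requires exploiting Wolpert's strict, quantitative convexity of $\sqrt{\ell_\alpha}$ together with the Bochner identity for $u$, to preclude any nontrivial transverse variation of $u$ at $x_0$. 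This is the technical crux; once carried out, closedness of $R$ and the full theorem follow.
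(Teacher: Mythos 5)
Your framework (blow-up at a putative boundary point of the regular set, the product structure of the tangent cone as $T_{u(x_0)}\mathcal{T}'$ times $k$ model pinching cones, and the observation that mere subharmonicity of $\sqrt{\ell_\alpha}\circ u$ is insufficient) matches the paper's setup, and you are right that the crux is a rigidity statement for the transverse components. But the proposal has a genuine gap in two respects. First, the ``rigidity step'' you defer is not a finishing touch --- it is essentially the entire content of the theorem, and the tools you name for it (strict Wolpert convexity plus ``the Bochner identity for $u$'') cannot deliver it: the Bochner/Eells--Sampson formula is unavailable at points mapping to $\partial\mathcal{T}$, which is precisely the regularity one is trying to establish, so the argument as sketched is circular. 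The paper's substitute is a quantitative iteration scheme: it approximates the blow-up maps by configurations $\Lambda_{\sigma_i}$ built from symmetric geodesics $\Gamma_\rho$ in the model space, shows by harmonic replacement (nearest-point projection onto the convex sets ${\bf H}[\rho_0/2]$ and subharmonicity of $d^2(u,h)$) that $\sup_{B_R}d_{\overline{\bf H}}(v_k,{\bf H}[\rho_0/2])<r/2^{k+2}$ for all $k$, and contradicts this with the fixed distance $d_{\overline{\bf H}}(v_k(0),{\bf H}[\rho_0/2])=\rho_0/2$. Nothing in your outline produces such a geometric decay, and no soft convexity argument is known to replace it.

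Second, even granting your rigidity step, the deduction is logically off. Showing that the tangent map $u_\star$ lies in the stratum factor does not ``imply $u$ maps a neighborhood of $x_0$ into $\mathcal{T}'$'' and does not contradict $x_0\in\overline{R}$; it only says the transverse part of the blow-up limit is constant, i.e.\ the transverse components vanish to order higher than $\mathrm{Ord}^u(x_0)$. That is exactly the paper's Case II, which is resolved not by contradiction at that stage but by rescaling with the alternative factor $\lambda^v$ to force a non-constant transverse limit and then rerunning the full iteration. The paper's actual contradiction runs in the opposite direction from yours: it proves the transverse components \emph{cannot} vanish at $x_0$, so $x_0$ was never a singular point. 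As written, your proposal would at best reprove the Daskalopoulos--Mese statement that tangent maps at singular points are degenerate in the transverse direction, which is strictly weaker than emptiness of the singular set.
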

Theorem \ref{nosing} completes the circle of ideas initiated by Daskalopoulos and Mese by simplifying the original argument so that it applies to the higher order points. In particular, to prove the holomorphic rigidity of Teichm\"uller space, Daskalopoulos and Mese \cite{1} showed that harmonic maps into $\overline{\mathcal{T}}$ are sufficiently regular to permit the application of Siu’s Bochner technique. They proved that a harmonic map from an $n$-dimensional smooth Riemannain domain to $\overline{\mathcal{T}}$ doesn't have order 1 {\it singular points}, which are points mapped to the boundary of $\mathcal{T},$ and its {\it singular set} has dimension $\leq n-2.$ Theorem \ref{nosing}, establishing a regularity theory, paves the way for applying harmonic techniques. 



\subsection{Outline of this paper}
In section 2, we briefly describe the basic concepts related to this paper. We define the {\it model space} $\bf H$ and its metric completion $\overline{\bf H} := {\bf H} \cup \{P_0\}$ in section \ref{sec:modelspace}, where we identify the boundary of $\bf H$ by the single $P_0.$ We also introduce {\it symmetric geodesics} in $\bf H$ and {\it a metric space $\overline{\bf H}_A$} in sections \ref{2.2} and \ref{2.3}. Symmetric geodesic is an important tool in the following sections to approximate the image of a pullback limit of the sequence of blow-up maps (cf. Lemma \ref{linearapproxblowupsym0}). We explain the local coordinates near the boundary of the augmented Teichmüller space $\overline{\mathcal{T}}$ concisely in the end of the section. Augmented Teichmüller space $\overline{\mathcal{T}}$ of dimension $k$ is a stratified space and each boundary point is contained in a $j$-dimensional stratum for some $j <k.$ The neighborhood near the boundary point $P \in \partial \mathcal{T}$ is asymptotically isometric to a product space of a $j$-dimensional smooth open stratum $\mathcal{T}'$ and $\overline{\bf H}^{k-j}=\overline{\bf H} \times ... \times \overline{\bf H}.$

Section 3 focuses on harmonic maps into the model space. In this section, we prove that non-constant harmonic maps into $\overline{\bf H}$ are smooth, i.e. avoid the boundary (cf. Theorem \ref{result0}). Since $\overline{\bf H}$ captures singular features near $\partial \mathcal{T},$ all the key ideas for the main theorem appear in this section. To prove Theorem \ref{result0}, we construct a tangent map for our harmonic map $u:\Omega \to \overline{\bf H}$ and use its structure to get the result.
In particular, applying the modification factor $\lambda^u,$ we construct a sequence $\{u_k\}$ of non-constant \emph{harmonic blow-up maps} converging locally uniformly to a tangent map $u_*$ in a pullback sense. In this setting, $u_*$ is a homogeneous harmonic map into the metric space $\overline{\bf H}_A.$ The structure of $u_*$ implies necessary distance estimates (cf. Lemmas \ref{fits0} -- \ref{s30}), which are the key step in the proof of Theorem \ref{result0}.

In section \ref{sec:blowup}, we aim to prove Theorem \ref{nosing}. Under the local coordinates of $\overline{\mathcal{T}}$ near $\partial \mathcal{T},$ we assume on the contrary that the singular set of $u$ is non-empty and pick a singular point $x_0$ such that $u(x_0) \in \partial \mathcal{T}$ is contained in a $j$-dimensional stratum $\mathcal{T}'$. Analogously to \cite{1}, we decompose the harmonic map $u$ near the singular point $x_0$ as $u=(V,v)$ where $V$ is called {\it regular component} mapping to $\mathcal{T}'$ and $v$ is called {\it singular component} mapping to $\overline{\bf H}^{k-j}.$ Following from the hypothesis that $u(\Omega) \cap \mathcal{T} \neq \emptyset,$ the singular map $v:B_{r_0}(x_0) \to \overline{\bf H}^{k-j}$ has all non-constant component maps $v^{\eta}:B_{r_0}(x_0) \to \overline{\bf H}$ with $v^{\eta}(x_0)=P_0.$ We construct the sequence $\{v_{\sigma_i}: B_1(0) \to \overline{\bf H}^{k-j}\}$ of \emph{blow-up maps} and show a subsequence of $\{v_{\sigma_i}\}$ converges to a homogeneous harmonic limit map $v_*$ in the pullback sense. We have two cases: (i) there exists a non-constant component $v^{\eta_0}_*$ of pullback limit $v_*$ or (ii) $v_*$ is constant.

Section \ref{sec:blowup1} shows that $v^{\eta_0}(x_0) \neq P_0,$ which implies that $x_0$ is not a singular point of $u$ and then the singular set of $u$ is empty. However, unlike $u$ in section \ref{sec:blowup0}, $v$ is not a harmonic map because WP-metric is {\it only} asymptotically the product metric. To resolve this difficulty, we construct a sequence of {\it approximating harmonic maps}, which is the essential tool in replacing the non-harmonic map $v^{\eta_0}_{\sigma_i}$ by the harmonic map $w^{\eta_0}_i$ in the subsequent arguments (cf. Lemma \ref{re}). The proof of Theorem \ref{nosing} then proceeds analogously to the method in section \ref{sec:blowup0}. Section \ref{sec:blowup2} is for the case that all components $v^{\eta}_*$ are constant. To handle this complexity, we introduce a modified scaling factor $\lambda^v,$ replacing the earlier factor of $\lambda^u,$ to construct the new sequence $\{\tilde{v}_{\sigma_i}:B_1(0) \to \overline{\bf H}^{k-j}\}$ of \emph{alternative blow-up maps}. Then, the idea follows the steps in section \ref{sec:blowup1} with some further adjustments due to the changing of the factor $\lambda^v$.

Section \ref{pfmain} constitutes the proof of Theorem \ref{mainresult}. We prove Theorem \ref{mainresult} by invoking the results from the analysis in previous sections with the assumption $u(\Omega) \cap \mathcal{T}\neq \emptyset$ replaced by $u(\Omega) \cap \mathcal{T}' \neq \emptyset$.

\subsection{Main Concepts} Let $u: \Omega \to (\overline{\mathcal{T}},d_{wp}).$ We recall these fundamental ideas and provide references:
\begin{itemize}
    \item {\it order of a harmonic map} (cf. \cite[Section 2]{4}): The order of a harmonic function is the degree of the dominant term in the homogeneous harmonic polynomial approximating $u-u(x)$ near $x$.
    \item {\it blow-up maps $u_{\sigma}$ at $x_0$} (cf. \cite[Section 3]{4}): Using normal coordinates centered at $x_0,$ we identify $x_0=0.$ We restrict $u$ to a ball $B_{\sigma}(x_0) \subset \Omega$ where factor $\sigma>0$ is close to zero, and rescale the domain of $u$ by the factor $\sigma$ and the distance by the factor $\lambda^u(\sigma),$ where $\lambda^u(\sigma)$ is approaching to infinity as $\sigma \to 0.$ The scaling map is called the blow-up map $u_{\sigma}: B_1(0) \to (\overline{\mathcal{T}},\lambda^u(\sigma)d_{wp})$ where $u_{\sigma}(x)=\lambda^u(\sigma) u(\sigma x).$
    
\end{itemize}

\section{Preliminaries}
\subsection{Model Space} \label{sec:modelspace}
Model space is a crucial tool when studying $\overline{\mathcal{T}}$ because it provides a lower-dimensional, explicitly defined setting for the boundary geometry of $\overline{\mathcal{T}}.$ Near the boundary, $\overline{\mathcal{T}}$ is asymptotically isometric to the product of a smooth open stratum $\mathcal{T}'$ with the structure of a Kähler manifold and a metric space $\overline{\bf H}$ or $\overline{\bf H} \times ... \times \overline{\bf H}$ (cf. \cite{1} and \cite{6}). The model space $\overline{\bf H}$ captures key singular features of $\overline{\mathcal{T}},$ such as the sectional curvature blow-up near $\partial \mathcal{T}$ and the non-local compactness of $\overline{\mathcal{T}},$ which are also properties of $\overline{\bf H}.$

Let $({\ms}, g_{\ms})$ be the {\it model space} of \cite[Section 2.1]{1}; i.e.
 \[
 {\bf H}=\{(\rho,\phi) \in \R^2: \rho >0, \phi \in \R\}\]
and
\[
g_{\bf H}=d\rho^2+ \rho^6 d\phi^2.
\]
We will call $(\rho,\phi)$ the {\it standard model space coordinates}. By direct computation, we obtain that ${\bf H}$ has negative Gauss curvature. 
The distance function defined by $g_{\bf H}$ will be denoted as $d_{\bf H}$. 
Let $(\bms, d_{\overline{\ms}})$ where $\bms:=\ms \cup \{P_0\}$ be the metric completion of the metric space $({\ms}, d_{\ms})$. Note that $(\overline{\bf H},d_{\overline{\bf H}})$ is an NPC space because it's a metric completion of the geodesically convex surface ${\bf H}$ with negative curvature.

One important property of model space $({\bf H}, g_{\bf H})$ is that we can define new coordinates $(\rho, \Phi)$ called {\it homogeneous coordinates}: Let $\rho$ be the same as the original one and 
 $
 \Phi=\rho^3 \phi.
$ In these homogeneous coordinates, the metric is given by
\begin{equation} \label{hcsm}
g_{\bf H}= 
\left(
\begin{matrix}
1+9\Phi^2\rho^{-2} & -3\rho^{-1}\Phi \\
-3\rho^{-1}\Phi & 1\\
\end{matrix}
\right).
\end{equation}
The homogeneous coordinates are used to define a scaling map, $P \mapsto \lambda P$. More precisely, for $P \in \bf H$ given by $P=(\rho, \Phi)$ in homogeneous coordinates, 
\begin{equation} \label{hcs}
\lambda P= (\lambda \rho, \lambda \Phi).
\end{equation}
Extend  the scaling map to $\bms$ by defining $\lambda P_0=P_0$. From (\ref{hcsm}), the local expression of $g_{\bf H}$ is invariant under this scaling map on $\overline{\bf H}.$ Then, in homogeneous coordinates,
\begin{equation} \label{hcsd}
 d_{\overline{\bf H}}(\lambda P, \lambda Q) =\lambda d_{\overline{\ms}}(P,Q).
\end{equation}

\subsection{Symmetric Geodesics}\label{2.2}
Let 
$
\gamma:(-\infty,\infty) \rightarrow {\bf H}
$
be an arclength parameterized geodesic and  $\gamma_\rho, \gamma_\phi$ be the coordinate functions of $\gamma$ with respect to the standard model space coordinates $(\rho,\phi)$.
The geodesic equations are given by
\begin{eqnarray} \label{gequ}
\gamma_{\rho} \gamma''_{\rho} = 3 \gamma_{\rho}^6 |\gamma'_{\phi}|^2 \ \mbox{ and }  \ \gamma_{\rho}^4 \gamma''_{\phi} = -6 \gamma'_{\rho} \cdot \gamma_{\rho}^3 \gamma'_{\phi}.
\end{eqnarray}   

\begin{definition}
An arclength parameterized geodesic  $\gamma=(\gamma_\rho,\gamma_\phi)$   is said to be a {\it symmetric geodesic}  if
\[
\gamma_{\rho}(s)=\gamma_{\rho}(-s) \ \mbox{ and } \ \gamma_{\phi}(s)=-\gamma_{\phi}(-s).
\] 
\end{definition}
A symmetric geodesic is uniquely determined by its value at $0$.  More precisely, for a fixed $\rho>0$,
there exists a unique symmetric geodesic
\begin{equation} \label{gammarho}
\gamma:\R \rightarrow {\ms}, \ \ \gamma(0)=(\rho,0).
\end{equation}

In homogeneous coordinates, (\ref{gequ}) is rearranged as
\begin{equation}\label{gequ1}
    \gamma_{\rho}''=3 \frac{\left| \gamma_{\Phi}' \gamma_{\rho}^3-3 \gamma_{\Phi}\gamma_{\rho}^2 \gamma_{\rho}'  \right|^2}{\gamma_{\rho}^7} \ \text{ and } \ 6\gamma_{\Phi}(\gamma_{\rho}')^2=\gamma_{\Phi}''\gamma_{\rho}^2-3\gamma_{\Phi}\gamma_{\rho}\gamma_{\rho}''.
\end{equation} Then, given a symmetric geodesic $\gamma=(\gamma_{\rho},\gamma_{\Phi}),$ the scaling curve $\lambda \gamma=(\lambda \gamma_{\rho}, \lambda \gamma_{\Phi})$ also satisfies (\ref{gequ1}). In other words, the scaling of a symmetric geodesic is still a symmetric geodesic.

\begin{definition} \label{Gam}
For $\rho>0$, the image $\Gamma_\rho$ of the parameterized geodesic (\ref{gammarho}) separates $\ms$ into two convex subsets, one of which contains the point $P_0$ in its metric completion.  The closure of the other convex subset will be denoted ${\ms}[\rho]$.   \end{definition}

\begin{lemma} \label{geomH}
The convex subsets  $\ms[\rho]$  satisfy the following properties:  
\begin{itemize}
\item[(a)] $\ms[\rho_2] \subseteq \ms[\rho_1]$ whenever $\rho_1 \leq \rho_2$.
\item[(b)]
$
 {\bf H}[ \lambda  \rho]=  \lambda {\bf H}[\rho]$ for $\lambda>0$.
\end{itemize}
\end{lemma}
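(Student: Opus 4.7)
The plan is to treat parts~(a) and~(b) separately. Part~(b) is essentially formal given the scaling invariance of~\eqref{gequ1}, while part~(a) reduces to the geometric fact that distinct symmetric geodesics are disjoint: $\Gamma_{\rho_1}\cap\Gamma_{\rho_2}=\emptyset$ whenever $\rho_1\ne\rho_2$.

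To prove this disjointness, two ingredients come into play. First, the reflection $(\rho,\phi)\mapsto(\rho,-\phi)$ is an isometry of $\bf H$ preserving each $\Gamma_\rho$, since it carries an arclength symmetric geodesic through $(\rho,0)$ to one with the same data, so by the uniqueness~\eqref{gammarho} the images agree. Second, $\bf H$ is a negatively curved Riemannian surface whose completion $\bms$ is CAT(0), so distinct complete geodesics meet in at most one point and every geodesic is injective. If $Q\in\Gamma_{\rho_1}\cap\Gamma_{\rho_2}$, reflection gives a second intersection point $\bar Q$. The case $Q\ne\bar Q$ forces the two images to coincide; comparing the two arclength symmetric parametrizations, which must differ by some $s\mapsto\pm s+c$ with $c\ne 0$, makes $\gamma_\phi$ odd about both $0$ and $c$ and hence periodic, contradicting injectivity of the geodesic (the degenerate option $\gamma_\phi\equiv 0$ is excluded by the arclength identity $(\gamma_\rho')^2+\gamma_\rho^6(\gamma_\phi')^2=1$ together with $\gamma_\rho'(0)=0$). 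The case $Q=\bar Q$ puts $Q$ on the $\rho$-axis, but a symmetric geodesic meets the $\rho$-axis only at its center of symmetry: any second zero $\gamma_\phi(s_0)=0$ with $s_0\ne 0$ would give $\gamma(s_0)=\gamma(-s_0)$ by the symmetry relations, again violating injectivity.

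With disjointness established, $\Gamma_{\rho_2}$ is connected and therefore lies in a single component of $\ms\setminus\Gamma_{\rho_1}$. The $\rho$-axis meets $\Gamma_{\rho_1}$ transversely at $(\rho_1,0)$ (symmetric geodesics emanate perpendicular to the $\rho$-axis) and approaches $P_0$ as $\rho\to 0^+$, so $\{(\rho,0):\rho>\rho_1\}\subset\ms[\rho_1]^\circ$. Hence $(\rho_2,0)\in\Gamma_{\rho_2}\cap\ms[\rho_1]^\circ$ and $\Gamma_{\rho_2}\subset\ms[\rho_1]$. A symmetric argument places the $P_0$-component of $\ms\setminus\Gamma_{\rho_1}$ inside the $P_0$-component of $\ms\setminus\Gamma_{\rho_2}$, so the complementary component $\ms[\rho_2]^\circ$ lies in $\ms[\rho_1]^\circ$; taking closures gives~(a).

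For~(b), the scaling invariance of~\eqref{gequ1} shows that $\lambda\gamma$ is again a geodesic whenever $\gamma$ is; its arclength reparametrization is a symmetric geodesic passing through $\lambda(\rho,0)=(\lambda\rho,0)$ in homogeneous coordinates, so $\lambda\Gamma_\rho=\Gamma_{\lambda\rho}$ by uniqueness. The scaling map is a homeomorphism of $\bms$ fixing $P_0$, and therefore sends the two components of $\ms\setminus\Gamma_\rho$ to those of $\ms\setminus\Gamma_{\lambda\rho}$ while preserving which component has $P_0$ in its closure; hence $\lambda\ms[\rho]=\ms[\lambda\rho]$. The main obstacle in the whole argument is the disjointness step in~(a); everything else reduces to topological bookkeeping together with the scaling invariance already recorded in the paper.
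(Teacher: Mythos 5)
Your proposal is correct. For part (b) your argument is essentially the paper's: both use the scaling invariance of the geodesic equation (equivalently, of $d_{\overline{\bf H}}$ under $P \mapsto \lambda P$) to identify $\lambda \Gamma_\rho$ with $\Gamma_{\lambda\rho}$ via uniqueness of the symmetric geodesic through $(\lambda\rho,0)$, and then observe that the scaling homeomorphism fixes $P_0$ and hence matches up the two complementary components. For part (a) the paper offers no argument at all ("the assertion (a) is straightforward"), so your contribution there is to supply the missing proof, and it is a sound one: the disjointness $\Gamma_{\rho_1}\cap\Gamma_{\rho_2}=\emptyset$ for $\rho_1\neq\rho_2$ via the reflection isometry $(\rho,\phi)\mapsto(\rho,-\phi)$ together with injectivity of complete geodesics (if the two images shared a point off the $\rho$-axis they would share its reflection as well, hence coincide, forcing $\gamma_\phi$ odd about two distinct centers and therefore periodic; a shared point on the axis would force $\rho_1=\rho_2$ since a symmetric geodesic meets the axis only at its center of symmetry), followed by the connectedness bookkeeping using the $\rho$-axis as a transversal to decide which component of ${\bf H}\setminus\Gamma_{\rho_1}$ contains $\Gamma_{\rho_2}$. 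The one step I would spell out slightly more carefully is the final containment of components: from $\Gamma_{\rho_2}\subset{\bf H}[\rho_1]^\circ$ it is cleanest to argue that the connected set $U_1\cup\Gamma_{\rho_1}$ (with $U_1$ the $P_0$-component for $\Gamma_{\rho_1}$) is disjoint from $\Gamma_{\rho_2}$ and contains points arbitrarily close to $P_0$, hence lies in the $P_0$-component for $\Gamma_{\rho_2}$, and then pass to complements; but this is exactly the "symmetric argument" you indicate, so the proof stands.
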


\begin{proof}
The assertion (a) is straightforward.  For (b), let $\gamma$ be the symmetric geodesic of  (\ref{gammarho}).  The property of scaling implies that for $t_1, t_2 \in \R$, 
\[
 d_{\overline{\bf H}}(\lambda \gamma(t_1), \lambda \gamma(t_2))=\lambda d_{\overline{\bf H}}(\gamma(t_1), \gamma(t_2))=\lambda \left|t_1-t_2\right|
\]
in homogeneous coordinates. By $\lambda P_0=P_0$ and (\ref{hcsd}),
\[
d_{\overline{\ms}}(\lambda \gamma(0),P_0)=\lambda d_{\overline{\ms}}( \gamma(0),P_0)=\lambda \rho.
\]
Thus, the curve $t \mapsto c(t):=\lambda \gamma(\frac{t}{\lambda})$ is the unit speed parameterization of the symmetric geodesic with initial value $c(0)=(\lambda \rho,0)$ which implies the assertion (b).
\end{proof}

\adjustbox{center}{
\begin{tikzpicture}[
    scale=0.8,
    >=Stealth,
]

\begin{scope}[xshift=7cm, local bounding box=rightFig]
    \draw[->] (0,-4) -- (0,4) node[above] {$\phi$};
    \draw[->] (0,0) -- (5,0) node[right] {$\rho$};
    
    \draw[] (0,-4) -- (0,4);
    \node[] at (-0.5,0) [rotate=90] {$P_0$};
    
    \draw[] (2.5, -5) -- (2.5, 5);
    \node[] at (2.8, -0.3) {$r$};
    \draw[]
      (4.7,4) to[out=183, in=60, looseness=.5] (0.4,3)
      to[out=-130, in=90, looseness=.5] (0.2,1.5)
      to[out=-90, in=90, looseness=.5] (0.2,-1.5)
      to[out=-90, in=130, looseness=.5] (0.4,-3)
      to[out=-60, in=-183, looseness=.5] (4.7,-4)
      node[] at (5, 4.3) {$\Gamma_{\rho_0/2}$};
        
    \draw[]
      (4.3,3.5) to[out=183, in=60, looseness=.5] (1,2.7)
      to[out=-130, in=90, looseness=.5] (.8,1.1)
      to[out=-90, in=90, looseness=.5] (0.8,-1.1)
      to[out=-90, in=130, looseness=.5] (1,-2.7)
      to[out=-60, in=-183, looseness=.5] (4.3,-3.5)
      node[] at (4.8, 3.5){$\Gamma_{\rho_0}$};
\end{scope}

    \node[align=center, below] at (current bounding box.south) 
    {\textbf{Figure 1}};
\end{tikzpicture}
}

\begin{lemma} \label{cvxsets}
For any $r>0,$ 
\[
\lim_{\rho_0 \to 0} \, d_{\overline{\bf H}}(\Gamma_{\rho_0},\Gamma_{\rho_0/2} \backslash B_r(P_0))=r.
\]
\end{lemma}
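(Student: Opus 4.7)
The plan is to use the scaling isometry of Lemma \ref{geomH}(b) to reduce the $\rho_0 \to 0$ limit to an $R \to \infty$ asymptotic, and then pin down that asymptotic with a single application of the triangle inequality anchored at $P_0$. By Lemma \ref{geomH}(b), the identity (\ref{hcsd}), and the convention $\lambda P_0 = P_0$, the map $P \mapsto \rho_0 P$ on $\bms$ (in homogeneous coordinates) scales all distances by $\rho_0$ and sends $\Gamma_1,\ \Gamma_{1/2},\ B_{r/\rho_0}(P_0)$ bijectively onto $\Gamma_{\rho_0},\ \Gamma_{\rho_0/2},\ B_r(P_0)$, respectively. Consequently, setting $R := r/\rho_0$,
\[
d_{\overline{\bf H}}\bigl(\Gamma_{\rho_0},\, \Gamma_{\rho_0/2} \setminus B_r(P_0)\bigr) = \rho_0 \cdot d_{\overline{\bf H}}\bigl(\Gamma_1,\, \Gamma_{1/2} \setminus B_R(P_0)\bigr),
\]
so the claim reduces to showing $d_{\overline{\bf H}}(\Gamma_1, \Gamma_{1/2} \setminus B_R(P_0)) = R + O(1)$ as $R \to \infty$.

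Next I would record two preliminary facts from the definitions. First, for any $P = (\rho, \phi) \in \ms$, $d_{\overline{\bf H}}(P_0, P) = \rho$: because $g_{\ms} = d\rho^2 + \rho^6 d\phi^2$ (in particular $g_{\rho\rho} = 1$), any curve joining $P_0$ to $P$ has length at least the total variation of its $\rho$-coordinate, which is at least $\rho$, while the radial curve $\phi = \text{const}$ attains this length. Second, $d_{\overline{\bf H}}(P_0, \Gamma_1) = 1$: along the symmetric geodesic defining $\Gamma_1$, the first equation in (\ref{gequ}) forces $\gamma_\rho'' \geq 0$, and symmetry gives $\gamma_\rho'(0) = 0$, so $\gamma_\rho \geq \gamma_\rho(0) = 1$, with equality at the vertex $(1,0) \in \Gamma_1$.

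With these in hand, the two-sided estimate follows immediately. For any $Q \in \Gamma_{1/2} \setminus B_R(P_0)$, the triangle inequality gives
\[
d_{\overline{\bf H}}(Q, \Gamma_1) \;\geq\; d_{\overline{\bf H}}(Q, P_0) - d_{\overline{\bf H}}(P_0, \Gamma_1) \;\geq\; R - 1,
\]
which is the desired lower bound. For the upper bound, $d_{\overline{\bf H}}(P_0, \cdot)$ is continuous on the connected curve $\Gamma_{1/2}$ and ranges over $[1/2, \infty)$, so for $\rho_0$ small enough there exists $Q_0 \in \Gamma_{1/2}$ with $d_{\overline{\bf H}}(Q_0, P_0) = R$; applying the triangle inequality with the point $(1, 0) \in \Gamma_1$ yields
\[
d_{\overline{\bf H}}(Q_0, \Gamma_1) \;\leq\; d_{\overline{\bf H}}(Q_0, P_0) + d_{\overline{\bf H}}(P_0, (1,0)) \;=\; R + 1.
\]

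Combining these and undoing the scaling gives $r - \rho_0 \leq d_{\overline{\bf H}}(\Gamma_{\rho_0}, \Gamma_{\rho_0/2} \setminus B_r(P_0)) \leq r + \rho_0$, from which the stated limit $r$ follows as $\rho_0 \to 0$. I do not expect any substantial obstacle; the only point requiring care is the identity $d_{\overline{\bf H}}(P_0, (\rho, \phi)) = \rho$, which however is an immediate consequence of $g_{\rho\rho} = 1$ in the standard model-space coordinates.
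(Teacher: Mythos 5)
Your scaling reduction, the identities $d_{\overline{\bf H}}(P_0,(\rho,\phi))=\rho$ and $d_{\overline{\bf H}}(P_0,\Gamma_1)=1$, and the upper bound are all correct. The lower bound, however, is where the entire content of the lemma lives, and the step
\[
d_{\overline{\bf H}}(Q,\Gamma_1)\ \geq\ d_{\overline{\bf H}}(Q,P_0)-d_{\overline{\bf H}}(P_0,\Gamma_1)
\]
is not an instance of the triangle inequality. For a set $A$ the reverse triangle inequality only yields $d(Q,A)\geq d(Q,P_0)-\sup_{P\in A}d(P,P_0)$, and $\sup_{P\in\Gamma_1}d_{\overline{\bf H}}(P,P_0)=\infty$ because $\Gamma_1$ is an unbounded curve. (In the Euclidean plane, take $A$ a line at distance $1$ from $P_0$ and $Q$ a point at distance $2$ from $A$ but far along it: then $d(Q,A)=2$ while $d(Q,P_0)-d(P_0,A)$ is arbitrarily large.) Your inequality would be justified only if the point of $\Gamma_1$ nearest to $Q$ lay within distance $1$ of $P_0$, i.e.\ were essentially the vertex $(1,0)$; but that is precisely the geometric assertion to be proved. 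A priori, a point $Q\in\Gamma_{1/2}$ with $\rho$-coordinate $R$ could be very close to the point of $\Gamma_1$ with the same $\rho$-coordinate, since both curves run out to infinity side by side, and then $d(\Gamma_1,\Gamma_{1/2}\setminus B_R(P_0))$ would be $O(1)$ rather than $R+O(1)$.

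The missing ingredient is exactly what the paper's proof supplies: working in homogeneous coordinates and using the scaling law $\gamma^{\rho_0}=\rho_0\,\gamma^{1}$, it shows that the $\phi$-coordinates of the relevant points of $\Gamma_{\rho_0}$ and of $\Gamma_{\rho_0/2}\setminus B_r(P_0)$ separate like $\rho_0^{-2}$, hence diverge as $\rho_0\to 0$. This divergence is what forces any connecting path to pass arbitrarily close to $P_0$, so that in the limit $d_{\overline{\bf H}}(x,y)$ equals $d_{\overline{\bf H}}(x,P_0)+d_{\overline{\bf H}}(y,P_0)\geq\rho_0+r$, giving the lower bound. In your normalized picture you would need the analogous statement that the $\phi$-separation of $\Gamma_1$ and $\Gamma_{1/2}$ makes the direct (non--through--$P_0$) route cost at least $R-O(1)$ as $R\to\infty$; until that is proved, the lower half of your two-sided estimate is unsupported.
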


\begin{proof}
Let $r>0.$ Define $\gamma^{\rho_0}, \, \gamma^{\rho_0/2}$ to be symmetric geodesics such that
\[
\gamma_{\rho}^{\rho_0}(0)=\rho_0, \ \gamma^{\rho_0/2}_{\rho}(0)=\rho_0/2.
\]
Denote their images by $\Gamma_{\rho_0} \text{ and } \Gamma_{\rho_0/2}$ respectively. For each positive $\rho_0<r,$ choose $s_1,s_2 >0$ such that 
\[
\gamma_{\rho}^{\rho_0}(s_1)=r \text{ and } \gamma_{\rho}^{\rho_0/2}(s_2)=r.
\]
Since $\gamma^{\rho_0} \text{ and } \gamma^{\rho_0/2}$ are arclength parameterized geodesics and
\[
\lim_{\rho_0 \to 0} \, d_{\overline{\bf H}}(\gamma^{\rho_0}(s_1), P_0)=\lim_{\rho_0 \to 0} \, d_{\overline{\bf H}}(\gamma^{\rho_0/2}(s_2),P_0)=r,
\]
then 
\begin{equation} \label{finite}
|s_1-s_2| \to 0 \text{ as } \rho_0 \to 0.    
\end{equation}

Then, by applying homogeneous coordinates and (\ref{finite}):
\begin{align*}
    \liminf_{\rho_0 \to 0} \left|\gamma^{\rho_0}_{\phi}(s_1)-\gamma^{\rho_0/2}_{\phi}(s_2) \right| =& \liminf_{\rho_0 \to 0} \left|\frac{\gamma_{\Phi}^{\rho_0}}{\left(\gamma_{\rho}^{\rho_0}\right)^3}(s_1)-\frac{\gamma_{\Phi}^{\rho_0/2}}{\left(\gamma_{\rho}^{\rho_0/2}\right)^3}(s_2)\right|\\
    =& \liminf_{\rho_0 \to 0} \left|\frac{\rho_0\gamma_{\Phi}^1}{\left(\rho_0\gamma_{\rho}^1\right)^3}(s_1)-\frac{\frac{\rho_0}{2}\gamma_{\Phi}^1}{\left(\frac{\rho_0}{2}\gamma_{\rho}^1\right)^3}(s_2)\right|\\
    =& \liminf_{\rho_0 \to 0} \left|\frac{1}{\rho_0^2} \frac{\gamma_{\Phi}^1}{\left(\gamma_{\rho}^1\right)^3}(s_1)- \frac{4}{\rho_0^2}\frac{\gamma_{\Phi}^1}{\left(\gamma_{\rho}^1\right)^3}(s_2)\right|\\
    =& \liminf_{\rho_0 \to 0} \frac{1}{\rho_0^2} \left|\frac{\gamma_{\Phi}^1}{\left(\gamma_{\rho}^1\right)^3}(s_1)- 4\frac{\gamma_{\Phi}^1}{\left(\gamma_{\rho}^1\right)^3}(s_2)\right|\\
    =& \liminf_{\rho_0 \to 0} \frac{1}{\rho_0^2}\left|\gamma_{\phi}^1(s_1)-4\gamma_{\phi}^1(s_2)\right|\\
    =& \ \infty,
\end{align*}
which implies that  
\[
\liminf_{\rho_0 \to 0} |\phi_2-\phi_1|= \infty
\]
where $(\rho_1,\phi_1) \in \Gamma_{\rho_0} \text{ and } 
(\rho_2,\phi_2)\in \Gamma_{\rho_0/2}\backslash B_r(P_0):= \{\gamma^{\rho_0/2}(s): s \geq s_2\}.$ See Figure 1. Observe that  $\Gamma_{\rho_0}\backslash B_r(P_0) \cap \Gamma_{\rho_0/2}\backslash B_r(P_0)=\emptyset.$ So we have 
\begin{align*}
&\lim_{\rho_0 \to 0} \, d_{\overline{\bf H}} (\Gamma_{\rho_0}\backslash B_r(P_0),\Gamma_{\rho_0/2}\backslash B_r(P_0)) \\
=&\lim_{\rho_0 \to 0} \, d_{\overline{\bf H}}(\Gamma_{\rho_0} \backslash B_r(P_0), P_0)+ d_{\overline{\bf H}}(\Gamma_{\rho_0/2} \backslash B_r(P_0),P_0)\\
=& \lim_{\rho_0 \to 0} \left( \inf_{(\rho_1,\phi_1) \in \Gamma_{\rho_0}\backslash B_r(P_0)} \left| \rho_1-0\right| + \inf_{(\rho_2,\phi_2) \in \Gamma_{\rho_0/2}\backslash B_r(P_0)} \left| \rho_2-0\right| \right)\\
=& r+r =2r.
\end{align*} 
Analogously to the argument above, since $\Gamma_{\rho_0} \cap \Gamma_{\rho_0/2} \backslash B_r(P_0)=\emptyset,$ therefore we have the conclusion:
\begin{align*}
    \lim_{\rho_0 \to 0} \, d_{\overline{\bf H}}(\Gamma_{\rho_0},\Gamma_{\rho_0/2}\backslash B_r(P_0)) =& 
     \lim_{\rho_0 \to 0} \, d_{\overline{\bf H}}(\Gamma_{\rho_0}, P_0)+ d_{\overline{\bf H}}(\Gamma_{\rho_0/2}\backslash B_r(P_0),P_0)\\
    =&\lim_{\rho_0 \to 0} \left( \inf_{(\rho_1,\phi_1) \in \Gamma_{\rho_0}} \left| \rho_1-0\right| + \inf_{(\rho_2,\phi_2) \in \Gamma_{\rho_0/2}\backslash B_r(P_0)} \left| \rho_2-0\right| \right)\\
    =&\lim_{\rho_0 \to 0} \rho_0 + r\\
    =& \ r.
\end{align*}
\end{proof}

\begin{lemma} \label{distest}
If $C$ is the the complement 
of  ${\ms}[\rho/2] \cup B_r(P_0) $, 
then 
\[
d_{\overline{\ms}} (C, \Gamma_\rho) \geq d_{\overline{\ms}} (\Gamma_{\rho/2} \backslash B_r(P_0), \Gamma_\rho)
\]
\end{lemma}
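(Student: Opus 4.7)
The plan is to use the nearest-point projection onto ${\bf H}[\rho/2]$. Because $\overline{\bf H}$ is CAT(0) and, by Lemma~\ref{geomH}, the set ${\bf H}[\rho/2]$ is a closed convex subset of $\overline{\bf H}$, the nearest-point projection $\pi \colon \overline{\bf H} \to {\bf H}[\rho/2]$ is well-defined, 1-Lipschitz, and restricts to the identity on ${\bf H}[\rho/2]$. For any $x \in C$, since $x \notin {\bf H}[\rho/2]$, the image $\pi(x)$ lies on the boundary $\Gamma_{\rho/2}$. Because $\Gamma_\rho \subset {\bf H}[\rho] \subset {\bf H}[\rho/2]$, the 1-Lipschitz property gives $d_{\overline{\bf H}}(\pi(x), y) \leq d_{\overline{\bf H}}(x, y)$ for all $y \in \Gamma_\rho$, and taking the infimum over $y$ yields $d_{\overline{\bf H}}(\pi(x), \Gamma_\rho) \leq d_{\overline{\bf H}}(x, \Gamma_\rho)$.

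The next step is to verify the claim that $\pi(x) \in \Gamma_{\rho/2} \setminus B_r(P_0)$ for every $x \in C$. Granting this,
\[
d_{\overline{\bf H}}(x, \Gamma_\rho) \;\geq\; d_{\overline{\bf H}}(\pi(x), \Gamma_\rho) \;\geq\; \inf_{z \in \Gamma_{\rho/2} \setminus B_r(P_0)} d_{\overline{\bf H}}(z, \Gamma_\rho) \;=\; d_{\overline{\bf H}}(\Gamma_{\rho/2} \setminus B_r(P_0), \Gamma_\rho),
\]
and taking the infimum over $x \in C$ proves the lemma.

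The main obstacle is establishing the claim $\pi(x) \notin B_r(P_0)$. The crucial input is that, in standard model-space coordinates, $d_{\overline{\bf H}}(P, P_0)$ equals the $\rho$-coordinate of $P$, so this function is convex along every geodesic of the CAT(0) space $\overline{\bf H}$. The plan is to study the perpendicular geodesic from $\pi(x)$ to $x$ realizing the projection: one computes the unit perpendicular to $\Gamma_{\rho/2}$ at $\pi(x)$ in the metric $g_{\bf H}$ using the symmetric geodesic equations (\ref{gequ}), then uses the convexity of $\rho$ along that geodesic, together with the hypothesis $\rho_x \geq r$ coming from $x \notin B_r(P_0)$, to conclude that $\rho_{\pi(x)} \geq r$ as well. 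The expected technical difficulty is that the 1-Lipschitz property of $\pi$ does not by itself force $\pi(x)$ to stay out of $B_r(P_0)$; one must invoke the explicit model-space geometry and the fact that $B_r(P_0) = \{\rho \leq r\}$ is a horoball based at $P_0$ in order to close the argument.
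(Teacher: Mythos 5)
Your first step is fine: in the CAT(0) space $\overline{\bf H}$ the nearest--point projection $\pi$ onto the closed convex set ${\bf H}[\rho/2]$ is $1$-Lipschitz, fixes $\Gamma_\rho\subset{\bf H}[\rho]\subset{\bf H}[\rho/2]$, and sends each $x\in C$ to a point of $\Gamma_{\rho/2}$, so $d_{\overline{\bf H}}(x,\Gamma_\rho)\geq d_{\overline{\bf H}}(\pi(x),\Gamma_\rho)$. The gap is exactly the claim you flag as the main obstacle: that $\pi(x)\notin B_r(P_0)$ for every $x\in C$. In the relevant regime $\rho/2<r$ this claim is \emph{false}. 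Since $\gamma_\rho^6\gamma_\phi'\equiv c$ along the symmetric geodesic and $\gamma_\rho(s)\to\infty$, the curve $\Gamma_{\rho/2}$ has bounded $\phi$-extent, $\sup_s|\gamma_\phi(s)|=\phi_\infty<\infty$. Take $x=(r,\Phi)$ with $\Phi\gg\phi_\infty$; this point lies in $C$. Any path from $x$ to a point $\gamma(s)$ with $\gamma_\rho(s)\geq r$ either stays in $\{\rho\geq m\}$, costing at least $m^3(\Phi-\phi_\infty)$, or dips below level $m$, costing at least $2r-2m$; whereas the vertex $(\rho/2,0)$ is reachable at cost arbitrarily close to $r+\rho/2<2r$. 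Hence for $\Phi$ large $\pi(x)$ lies near the vertex, well inside $B_r(P_0)$, and your chain of inequalities breaks: $d_{\overline{\bf H}}(\pi(x),\Gamma_\rho)$ can be of order $\rho/2$, far below $d_{\overline{\bf H}}(\Gamma_{\rho/2}\setminus B_r(P_0),\Gamma_\rho)\approx r$ (Lemma~\ref{cvxsets}). The convexity mechanism you propose cannot rescue the claim: convexity of $t\mapsto d_{\overline{\bf H}}(\gamma(t),P_0)$ along the geodesic from $\pi(x)$ to $x$, together with $\rho_x\geq r$ at the far endpoint, gives no lower bound at the near endpoint (a convex function may start low and end high); worse, the outward normal to $\Gamma_{\rho/2}$ is proportional to $(-\rho^6\gamma_\phi',\gamma_\rho')$, whose $\rho$-component is negative, so $\rho$ actually \emph{decreases} initially as one leaves $\pi(x)$ toward $x$. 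Finally, $B_r(P_0)=\{\rho<r\}$ is a metric ball, not a horoball, though $d_{\overline{\bf H}}(\cdot,P_0)$ is indeed convex.

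The paper avoids the projection altogether and argues by boundary crossing: $\partial C=\bigl(\Gamma_{\rho/2}\setminus B_r(P_0)\bigr)\cup\Phi$ with $\Phi=\{\rho=r\}\setminus{\bf H}[\rho/2]$, so $d_{\overline{\bf H}}(C,\Gamma_\rho)$ is the minimum of the distances from these two boundary pieces to $\Gamma_\rho$; a path from the spherical piece $\Phi$ to $\Gamma_\rho$ must still cross $\Gamma_{\rho/2}$, and it does so no more cheaply than through $\overline{\Phi}\cap\Gamma_{\rho/2}$, whose points have $\rho=r$ and therefore lie in $\Gamma_{\rho/2}\setminus B_r(P_0)$. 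If you wish to keep your projection framework, you must separately handle the $x\in C$ whose projection lands in $\Gamma_{\rho/2}\cap B_r(P_0)$, using that such $x$ still satisfies $\rho_x\geq r$ and that the connecting geodesic must exit $C$ through $\partial C$ first --- which is, in effect, the paper's decomposition.
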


\begin{proof}
Since 
\[
\partial C = \left( \Gamma_{\rho/2} \backslash B_r(P_0) \right) \cup  \Phi
\]
where $\Phi:=\{\rho=r\} \backslash \ms[\rho/2]$,
we have
\[
d_{\overline{\ms}} (C, \Gamma_\rho) = \min \{d_{\overline{\bf H}}(\Phi, \Gamma_\rho), d_{\overline{\ms}} (\Gamma_{\rho/2} \backslash B_r(P_0), \Gamma_\rho)\}.
\]
Thus, the assertion follows from the fact that 
\[
d_{\overline{\bf H}}(\Phi, \Gamma_\rho) = d_\ms(\Phi \cap \Gamma_{\rho/2}, \Gamma_\rho) \geq d_\ms(\Gamma_{\rho/2} \backslash B_r(P_0), \Gamma_\rho).
\]
\end{proof}

\subsection{Metric Space $\overline{\bf H}_A$}\label{2.3}
We now define a {\it metric space} introduced in \cite{9}. Let $\overline{\bf H}_{\nu}$ be a copy of $\overline{\bf H}$ for each $\nu \in A$ where $A$ is a finite set. Define
\begin{equation}
    \label{H_A0}
    \overline{\bf H}_A:=  \amalg_{\nu \in A} \overline{\bf H}_{\nu}\ / \sim,
\end{equation}
where $\sim$ identifies all boundary points $P_0$ in $\overline{\bf H}_{\nu}$ as a single point. $\overline{\bf H}_A$ is endowed with the distance function $d_A$: For any $x=(\rho,\phi),y=(\rho',\phi')$ in $\overline{\bf H}_A,$ 
\begin{equation}\label{d_A}
d_{A}(x,y)=
\begin{cases}
    d_{\overline{\bf H}}(x,y) & x,y \in \overline{\bf H}_{\nu}\\
    \rho+\rho'  & x \in {\overline{\bf H}_{\nu}}, \, y \in \overline{\bf H}_{\nu'} \text{ for } \nu \neq \nu'.
\end{cases}
\end{equation}
The geodesic in $\overline{\bf H}_A$ connecting $x \in \overline{\bf H}_{\nu}$ and $y \in \overline{\bf H}_{\nu'}$, for $\nu \neq \nu',$ is the union of horizontal segments from $x=(\rho,\phi)$ to $P_0$ and from $y=(\rho',\phi')$ to $P_0.$ (cf. \cite[Section 2]{9})

Since $\overline{\bf H}$ is the metric completion of NPC space and $\{P_0\}$ is a convex subset, \cite[Theorem 2.11.1]{10} implies that $\overline{\bf H} \amalg \overline{\bf H} / \sim,$ which $\sim$ is induced by the identity map $id: \{P_0\} \to \{P_0\},$ is an NPC space. Inductively, we can also prove that $\overline{\bf H}_A$ is an NPC space.

\subsection {Harmonic Map to NPC Space} 
For map $u: (\Omega,g) \to X$ where $X$ is an NPC space, the $\epsilon$-energy density function is defined in \cite[Section 1.2]{2} as
\[
e_{\epsilon}(x) = 
\begin{cases} 
\displaystyle\int_{y \in \partial B_{\epsilon}(x)} \frac{d^2(u(x), u(y))}{\epsilon^2} \frac{d\sigma}{\epsilon^{n-1}}, & x \in \Omega_{\epsilon} \\ 
\mathmakebox[\widthof{$\displaystyle\int\limits_{y \in \partial B_{\epsilon}(x)} \frac{d^2(u(x), u(y))}{\epsilon^2} \frac{d\sigma}{\epsilon^{n-1}}$}][c]{0}, & \text{otherwise}
\end{cases}
\]
where $d \sigma$ here is $n-1$ dimensional surface measure and $\Omega_{\epsilon}:=\{x \in \Omega: dist(x,\partial \Omega) \geq \epsilon\}.$
Say $u$ has finite energy if 
\[ E^{u}:=\sup_{\varphi \in C_c(M), 0 \leq \varphi \leq 1} \limsup_{\epsilon \to 0} \int_{\Omega} \varphi e_{\epsilon} \, dvol_g < \infty.\]
From the result in \cite[Section 1.5]{2}, we know that as $\epsilon \to 0, \, e_{\epsilon}(x) \, dvol_g$ converges weakly to a Sobolev energy density measure $|du|^2(x) dvol_g$ weakly. This defines the energy formula in $\Omega$:
\[ E^{u}[\Omega]:=\int_{\Omega} |du|^2 dvol_g.\]
We say a continuous map $u:\Omega \to X$ is {\it harmonic} if it's the locally energy minimizing map i.e. for any $p \in \Omega,$ there exists $r>0$ such that the restriction map $u |_{B_r(p)}$ is the energy minimizer among all admissible maps in the space $W^{1,2}_u(B_r(p),X):=\{h \in W^{1,2}(B_r(p),X):d(u,h) \in W_0^{1,2}(B_r(p))\}$ (cf. \cite[Section 2.2]{2}). Moreover, a harmonic map $u$ is Lipschitz continuous by the following.

\begin{theorem}[Theorem 2.4.6 in \cite{2}] \label{cont}   
Let $\Omega$ be a Lipschitz Riemannian domain, and let $u$ solve the Dirichlet Problem. Then $u$ is locally Lipschitz continuous in the interior of $\Omega.$
\end{theorem}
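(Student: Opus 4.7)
The plan is to reproduce the strategy of Korevaar-Schoen. The cornerstone is the convexity of the squared distance function on an NPC target $X$: for any $Q \in X$ and geodesic $\gamma:[0,1]\to X$,
\[
d^2(\gamma(t),Q) \leq (1-t)d^2(\gamma(0),Q) + t\,d^2(\gamma(1),Q) - t(1-t)d^2(\gamma(0),\gamma(1)).
\]
First I would show that for every fixed $Q \in X$, the function $f_Q(x):=d^2(u(x),Q)$ is weakly subharmonic on $\Omega$ with respect to the Laplace-Beltrami operator of $(\Omega,g)$. This follows by testing the energy minimality of $u$ against comparison maps constructed by interpolating $u(x)$ toward $Q$ along the NPC geodesic with a small compactly supported nonnegative weight $\eta$; the convexity inequality above, combined with the first variation of energy from \cite[Section 2]{2}, yields the desired subharmonic inequality up to a lower-order term controlled by the geometry of $(\Omega,g)$.

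Next I would apply Moser iteration to $f_Q$ on balls $B_r(x_0) \subset\subset \Omega$ to obtain a mean value inequality
\[
\sup_{B_{r/2}(x_0)} f_Q \leq \frac{C}{\mathrm{vol}(B_r(x_0))}\int_{B_r(x_0)} f_Q \, dvol_g.
\]
Specializing to $Q = u(x_0)$ and invoking the Korevaar-Schoen $L^2$-modulus-of-smoothness estimate, which controls the average of $d^2(u(y),u(x_0))$ over $B_r(x_0)$ by $r^2$ times the average of $|du|^2$ over $B_{2r}(x_0)$, yields
\[
d^2(u(x),u(x_0)) \leq \frac{Cr^2}{\mathrm{vol}(B_{2r}(x_0))}\int_{B_{2r}(x_0)} |du|^2 \, dvol_g
\]
for all $x \in B_{r/2}(x_0)$. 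A parallel iteration for $|du|^2$, which satisfies an analogous subharmonic-type inequality by the same convexity mechanism applied to the first variation formula, gives a pointwise bound
\[
|du|^2(x_0) \leq \frac{C}{\mathrm{vol}(B_r(x_0))}\int_{B_r(x_0)} |du|^2 \, dvol_g.
\]
Combining the two estimates produces $d(u(x),u(x_0)) \leq C|x-x_0|$ on a neighborhood of each interior point $x_0$, which is the required local Lipschitz estimate.

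The main obstacle I expect is the rigorous justification of the subharmonic inequality for $|du|^2$ itself, since the classical Bochner identity is not available for metric-space-valued maps. One must work with the $\epsilon$-approximate energy densities $e_\epsilon$ introduced in the excerpt, establish a discrete subharmonicity at that level via comparison with geodesic-interpolated competitors, and then pass to the limit $\epsilon \to 0$ using the weak convergence $e_\epsilon\,dvol_g \rightharpoonup |du|^2\,dvol_g$. A secondary technical point is keeping track of the lower-order Christoffel contributions from $g$ through the iteration; these appear as forcing terms in the subharmonic inequality and must be shown not to spoil the scaling needed for Moser's iteration. The Lipschitz hypothesis on $\partial\Omega$ plays no role in the interior bound itself, but ensures well-posedness of the Dirichlet problem upstream.
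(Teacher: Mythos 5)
This statement is quoted verbatim from Korevaar--Schoen (Theorem 2.4.6 of \cite{2}); the paper under review supplies no proof of it, so the only meaningful comparison is with the source. The first half of your sketch is consistent with the standard argument there: $d(u(\cdot),Q)$ (hence its square) is weakly subharmonic by exactly the competitor-interpolation argument you describe, the sup bound over $B_{r/2}$ follows from the mean value inequality, and the $L^2$ modulus-of-smoothness estimate converts the resulting average of $d^2(u(\cdot),u(x_0))$ into $r^2$ times an energy average. Up to that point your outline is sound.

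The second half contains a genuine gap. The claim that $|du|^2$ ``satisfies an analogous subharmonic-type inequality by the same convexity mechanism'' is not something you can establish this way: for maps into a general NPC space there is no Bochner identity, and no weak subharmonicity of the energy density follows from quadrilateral convexity plus the first variation. Your proposed repair --- discrete subharmonicity of $e_\epsilon$ followed by the limit $\epsilon\to 0$ --- does not go through, because the error terms in the $\epsilon$-level comparison are of the same order as the quantity being estimated and do not vanish in the limit; obtaining any Bochner-type inequality in this setting was a long-standing open problem and is emphatically not part of Korevaar--Schoen's proof. What actually closes the argument is the energy monotonicity formula --- precisely the functions displayed in (\ref{monotonicityformula0}) of this paper. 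The monotonicity of $\rho\mapsto e^{c\rho^2}\rho E^u(\rho)/I^u(\rho)$ together with that of $\rho\mapsto e^{c\rho^2}I^u(\rho)/\rho^{n+1}$ yields
\[
\rho^{-n}E^u_{x_0}(\rho)\;\leq\;C\,\sigma^{-n}E^u_{x_0}(\sigma)\qquad\text{for all }0<\rho\leq\sigma,
\]
uniformly for $x_0$ in a compact interior subset. By Lebesgue differentiation this is exactly the local $L^\infty$ bound on $|du|^2$ that you were trying to extract from Moser iteration; substituting it into your first estimate at scale $r\sim|x-y|$ then gives $d(u(x),u(y))\leq C|x-y|$ with a constant controlled by $\sigma^{-n}E^u(B_\sigma)$. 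Without replacing your step on $|du|^2$ by the monotonicity formula (or by Korevaar--Schoen's equivalent device of running the mean value inequality on the two-point function $(x,y)\mapsto d(u(x),u(y))$ on $\Omega\times\Omega$), the proof does not close.
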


A nonconstant harmonic map $u: \Omega \rightarrow \overline{\bf H}$ has the following important monotonicity formula. Given  $x_0 \in \Omega$ and $r>0$ such that $B_{r}(x_0) \subset \Omega$, let 
\begin{eqnarray*} \label{nott0}
E^u(r) :=\int_{B_{r}(x_0)} |\nabla u|^2 d\mu \ \mbox{ and } \
I^u(r) := \int_{\partial B_{r}(x_0)} d^2(u(x),u(x_0)) d\Sigma.
\end{eqnarray*}
There exists a constant $c>0$ depending only on the $C^2$ norm of the domain metric $g$ (with $c=0$ when $g$ is the standard Euclidean metric) such that
\begin{equation} \label{monotonicityformula0}
r \mapsto e^{c r^2}\frac{r \ E^u(r) }{I^u(r)}, \ \ \  \ \ r \mapsto e^{c r^2}\frac{I^u(r) }{r^{n+1}} \ \ \ 
 \end{equation}
are non-decreasing.  
Recall that $I^u(r)>0$ for any $r>0,$ which follows from the fact that $d^2(u(x),u(x_0))$ is subharmonic and the Mean Value Property for subharmonic function. As a non-increasing limit of continuous functions,
\[
Ord^u(x_0):=\lim_{r \rightarrow 0} e^{c r^2}\frac{r \ E^u(r) }{I^u(r)}
\]
 is an upper semicontinuous function.  The value $Ord^u(x_0)$ is called the order of $u$ at $x_0.$  

\subsection {Convergence in Pullback Sense} \label{pullback}
We use the same notation as \cite[Section 3]{3} and summarize the idea. Let $\Omega_0=B_1(0)\text{ and } u: \Omega_0 \to X$ as above. Define $d_0$ to be the pullback pseudodistance on $\Omega_0 \times \Omega_0$ induced from $u,$
\[ d_0(x,y):=d(u(x),u(y)).\]
Inductively let $\Omega_{i+1}:=\Omega_i \times \Omega_i \times [0,1]$ with inclusion $\Omega_i \hookrightarrow \Omega_{i+1}$ by $x \mapsto (x,x,0).$ Extend $u_i$ to $u_{i+1}: \Omega_{i+1} \to X$ by 
\[u_{i+1}(x,y,\lambda):=(1-\lambda)u_i(x)+\lambda u_i(y),\]
and let $d_{i+1}$ denote the corresponding pullback pseudodistance. Define $\Omega_{\infty}=\bigcup \Omega_i$ and equip $\Omega_{\infty} \times \Omega_{\infty}$ with a pseudodistance $d_{\infty}$ whose restriction $d_i$ on $\Omega_i \times \Omega_i$ satisfies the inequality:
    \begin{equation}\label{npcineq}
    d^2_{i+1}(z,(x,y,\lambda)) \leq (1-\lambda)d^2_{i+1}(z,(x,x,0))+\lambda d^2_{i+1}(z,(y,y,0))-\lambda (1-\lambda)d^2_i(x,y)
    \end{equation}
where $x,y \in \Omega_i, \, z \in \Omega_{i+1}$ and $\lambda \in [0,1].$ Define its metric completion $Z := \Omega_{\infty} / \sim$ with the equivalence relation that $x \sim y$ if and only if $d_{\infty}(x,y)=0.$ Inequality (\ref{npcineq}) implies that $(Z,d_{\infty})$ is an NPC space. The pullback metric setting implies that the convex hull of $u(\Omega)$ is isometric to the quotient metric space $Z= \Omega_{\infty}/ \sim.$  
    
Given a sequence of blow-up maps $\{u_k=u_{\sigma_k}: \Omega_0 \to (X,d_k)\}$ into NPC spaces, iteratively construct $u_{k,i+1}: \Omega_{i+1} \to X_k=(X,d_k)$ induced from $u_{k,i}: \Omega_i \to X_k$ by
\[u_{k,i+1}(x,y,\lambda)=(1-\lambda)u_{k,i}(x)+\lambda u_{k,i}(y).\]
Then, the pullback pseudodistance $d_{k,i}$ of $u_{k,i}$ on $\Omega_i \times \Omega_i$ inherits inequality (\ref{npcineq}) from the NPC property of $X_k.$ For each $k,$ define $d_{k,\infty}$ by the restriction $d_{k,\infty}|_{\Omega_i \times \Omega_i}:=d_{k,i}.$ Say {\it $u_k$ converges locally uniformly to $u_*:\Omega_0 \to X_*=(X,d_*)$ in the pullback sense} if the pullback pseudodistance $d_{k,\infty}$ converges to $d_{*,\infty}$ locally uniformly i.e. $d_{k,i}$ converges to $d_{*,i}$ uniformly in each compact subset of $\Omega_i \times \Omega_i.$ Here, target space $X_*$ is isometric to the metric completion $Z:=\Omega_{\infty}/\sim$ where $x \sim y$ if and only if $d_{*,\infty}(x,y)=0.$ (cf. \cite[Section 3]{3})

\subsection{Local coordinates near $\overline{\mathcal{T}}$ with Weil--Petersson Metric Completion of $\mathcal{T}$}
Let $\mathcal{T}$ denote the Teichmüller space of an oriented compact surface of genus $g$ with $p$ marked points. Equipped with the Weil--Petersson metric $g_{wp},$ $(\mathcal{T},g_{wp})$ is a smooth Kähler manifold of complex dimension $k=3g-3+p>0$ with negative sectional curvature. Its Weil--Petersson metric completion $(\overline{\mathcal{T}},d_{wp})$ is a stratified NPC metric space. In particular, $\overline{\mathcal{T}}$ is decomposed as:
\[ \overline{\mathcal{T}}=\bigcup \mathcal{T}'.
\]
Here, $\mathcal{T}'$ is a $j$-dimensional open stratum parameterizing nodal surfaces obtained by pinching $k-j$ mutually disjoint simple closed curves to nodes. Teichmüller space $\mathcal{T}$ itself is a $k$-dimensional open stratum. Each open stratum is a product of lower-dimensional Teichmüller spaces and is totally geodesic with respect to Weil--Petersson metric.

For a boundary point $P \in \mathcal{T}' \subset \overline{\mathcal{T}}$ in a $j$-dimensional stratum, which corresponds to a nodal surface $S_0$, {\it local coordinates} in the neighborhood near $P$ can be constructed as follow: Let $r=(r_1,...,r_j) \in \mathbb{C}^j$ parametrize the neighborhood of nodal surface $S_0$ in $\mathcal{T}'$ and the plumbing coordinates $t=(t_1,...,t_{k-j}) \in \mathbb{C}^{k-j}$ regularize the nodes. With positive $t_i, \, i=1,...,k-j,$ we have an analytic family of Riemann surfaces $S_{r,t}$ of genus $g$ with $p$ marked points of $S_0.$ When $(t_1,...,t_{k-j}) \to (0,...,0),$ the Riemann surface degenerates to the nodal surface $S_r.$ Combined together, $r \text{ and } t$ define local coordinates on $\overline{\mathcal{T}}$ near $P$ (cf. \cite[Sections 1 and 2]{7}). 

The parameter $t=(t_1,...,t_{k-j})$ induces a model space ${\bf H}^{k-j},$ where $t_i$ maps to $(\rho_i,\phi_i) \in \bf{H}$ via:
\[ \rho_i=2(-\log |t_i|)^{-\frac{1}{2}} \text{ and } \phi_i=\frac{1}{8}\arg t_i.
\]
Specifically, for $P \in \mathcal{T}',$ where $\mathcal{T}'$ is a $j$-dimensional stratum, there exists a neighborhood $N \subset \overline{\mathcal{T}}$ of $P,$ a neighborhood $\mathcal{U} \subset \mathbb{C}^j$ of $0,$ a neighborhood $\mathcal{V} \subset \overline{\bf H}^{k-j}$ of $P_0,$ and an injection derived from the previous mappings
\begin{equation}\label{hom}
F: N \to \mathcal{U} \times \mathcal{V} \subset \mathbb{C}^j \times \overline{\bf H}^{k-j} \text{ by } Q \mapsto (r_1,...,r_j,(\rho_1,\phi_1),...,(\rho_{k-j},\phi_{k-j}))
\end{equation}where $F(P)=(0,...,0,P_0,...,P_0) \in \mathbb{C}^j \times \overline{\bf H}^{k-j}.$ $F$ is a homeomorphism and a biholomorphism when its domain is restricted on the open stratum (cf. \cite[Section 2.2]{1}).

Moreover, let $G$ be the smooth pullback metric extension of $g_{wp}$ on $\mathbb{C}^j$ under $F^{-1}$ and $h$ be the metric on $\overline{\bf H}^{k-j}$ defined in section \ref{sec:modelspace}. The tensor $G \otimes h$ will be the product metric on $\C^j \times {\overline{\bf H}}^{k-j}.$ We have $g_{wp} - G \otimes h \to 0$ in $C^1$ in terms of the complex parameter $t=(t_1,...,t_{k-j})$ given by (\ref{hom}). The precise estimates are contained in \cite{6}.

\section{Harmonic Maps into Model Space} \label{sec:blowup0}

In this section, we prove that a nonconstant harmonic map into the metric completion of model space has no singularities. We define the \emph{singular set} as
\[
\mathcal S(u)=\{x \in \Omega:  u(x)=P_0\}.
\]
A \emph{singular point} is a point in $\mathcal{S}(u)$ and a \emph{regular point} is a point that is not a singular point. 
\begin{theorem}\label{result0}
If $u:\Omega \rightarrow \overline{\bf H}$ is a nonconstant harmonic map, then $u$ has no singular points. 
\end{theorem}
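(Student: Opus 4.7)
The plan is to proceed by contradiction. Suppose $\mathcal{S}(u) \neq \emptyset$ and fix a singular point $x_0 \in \Omega$ with $u(x_0) = P_0$. Since $u$ is nonconstant and harmonic, the monotonicity formula (\ref{monotonicityformula0}) provides a well-defined order $\alpha := \mathrm{Ord}^u(x_0) > 0$. First I would construct a sequence of blow-up maps $u_k := u_{\sigma_k}: B_1(0) \to (\overline{\bf H}, \lambda^u(\sigma_k) d_{\overline{\bf H}})$ using a normalizing factor $\lambda^u(\sigma)$ comparable to $\sqrt{\sigma^{n-1}/I^u(\sigma)}$, chosen so that $\lambda^u(\sigma) \to \infty$ as $\sigma \to 0$ (this divergence is exactly what encodes the failure of regularity at $x_0$) while the boundary integral $I^{u_k}(1)$ is normalized. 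The rescaling preserves harmonicity because the domain scale and the target distance rescale compatibly.

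Next I would extract a subsequential pullback limit $u_*$ in the sense of section \ref{pullback}. The natural target of $u_*$ need not be $\overline{\bf H}$ itself: rather, it is isometric to the space $\overline{\bf H}_A$ of section \ref{2.3} for some finite index set $A$, reflecting that under rescaling the image may split into several copies of $\overline{\bf H}$ glued at $P_0$, one copy per asymptotic direction of approach to the singular value. The limit map $u_*$ is harmonic into the NPC space $\overline{\bf H}_A$, nonconstant by the normalization, and the monotonicity formula (\ref{monotonicityformula0}) combined with the scaling identity (\ref{hcsd}) forces $u_*$ to be homogeneous of degree $\alpha$ with $u_*(0) = P_0$. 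Because the geodesic equations (\ref{gequ1}) are preserved under the scaling (\ref{hcs}), each radial ray $r \mapsto u_*(r\theta)$ must, on whichever copy $\overline{\bf H}_\nu$ it enters, be a scaled symmetric geodesic in the sense of section \ref{2.2}.

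With this structure in hand, the heart of the proof is to show that such a tangent map is incompatible with an honest harmonic map from a Riemannian domain. Using Lemma \ref{cvxsets} together with Lemma \ref{distest}, I would establish quantitative distance estimates comparing $u$ on small geodesic spheres to the convex subsets $\ms[\rho]$ and their boundary curves $\Gamma_\rho$. Concretely, the homogeneous symmetric-geodesic structure of $u_*$ means its image on each radial ray escapes any $\ms[\rho]$ at a controlled rate, while Lemma \ref{distest} shows that points outside $\ms[\rho/2] \cup B_r(P_0)$ are bounded away from $\Gamma_\rho$ by an amount that, in the limit $\rho \to 0$, equals $r$ by Lemma \ref{cvxsets}. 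Transferring these bounds through the pullback convergence back to the blow-ups $u_k$, and then to the original map $u$ near $x_0$, produces an inconsistency between the requirement $u(x_0) = P_0$ and the nonconstancy of the tangent map $u_*$.

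I expect the main obstacle to be the rigorous identification of the tangent map target as $\overline{\bf H}_A$ together with the verification that each radial ray of $u_*$ is a scaled symmetric geodesic. The NPC pullback machinery of section \ref{pullback} supplies compactness and an NPC limit space, but the finer geometric description requires showing that distinct asymptotic branches of $u_k$ land in distinct copies $\overline{\bf H}_\nu$ in the limit, and that homogeneity combined with the harmonic map equation on $\overline{\bf H}$ forces each branch to solve the symmetric geodesic equations of section \ref{2.2}. Once that structural picture is in place, the contradiction via Lemmas \ref{cvxsets} and \ref{distest} becomes a direct, if delicate, comparison argument.
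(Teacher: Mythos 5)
Your setup — contradiction, blow-up maps normalized by $\lambda^u(\sigma)$, extraction of a nonconstant homogeneous tangent map $u_*$ into $\overline{\bf H}_A$ via the pullback machinery — matches the paper. But there are two problems. First, a structural misidentification: the radial rays of $u_*$ are \emph{not} scaled symmetric geodesics. The paper shows (Lemma \ref{lem:pwfnc0}) that $u_*$ is \emph{piecewise a function}: on each connected component $\Omega_m$ of $B_1(0)\setminus u_*^{-1}(u_*(0))$ the $\phi$-coordinate of the blow-ups freezes, so the image of each component is a horizontal ray $\{\phi=\mathrm{const}\}$ emanating from $P_0$ (the maps $L_{m,i}$ of (\ref{L_i})). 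The symmetric geodesics $\Gamma_\rho$ enter only as auxiliary barrier curves bounding the convex sets ${\bf H}[\rho]$, against which the image of $u_{\sigma_i}$ is compared via $\Lambda_{\sigma_i}$ in Lemma \ref{linearapproxblowupsym0}; they are not the image of $u_*$.

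Second, and more seriously, your proposed endgame — ``an inconsistency between the requirement $u(x_0)=P_0$ and the nonconstancy of the tangent map $u_*$'' — is not a contradiction. Nonconstant homogeneous harmonic maps into the NPC space $\overline{\bf H}_A$ sending $0$ to the cone point do exist, so no amount of analysis of $u_*$ alone can finish the argument. The actual mechanism is an iterative decay estimate that exploits the specific geometry of $\overline{\bf H}$: setting $u_k=u_{\sigma_0/2^k}$, one proves by induction that $\sup_{B_R(0)}d_{\overline{\bf H}}(u_k,{\bf H}[\rho_0/2])<r/2^{k+2}$. The induction step requires (i) the measure bound (\ref{secondone0}) giving a slice $\partial B_{\tau_0}(0)$ on which $u_{k-1}$ lies in ${\bf H}[\rho_0/2]$ outside a set of measure $\tfrac{16}{3}\epsilon$; (ii) harmonic replacement by a map $h$ with boundary values $\pi\circ u_{k-1}$, where $\pi$ is the nearest-point projection onto the convex set ${\bf H}[\rho_0/2]$, together with subharmonicity of $d^2_{\overline{\bf H}}(u_{k-1},h)$ and the mean value inequality to convert the small boundary integral into a sup bound improved by the factor $1/(2c_2)$; and (iii) the scaling identity $u_k(x)=\lambda_{k-1}u_{k-1}(x/2)$ with $\lambda_{k-1}\geq 1$ and Lemma \ref{geomH} to ensure $\lambda_{k-1}{\bf H}[\rho_0/2]\subseteq{\bf H}[\rho_0/2]$, so the estimate propagates. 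The contradiction is then that $d_{\overline{\bf H}}(u_k(0),{\bf H}[\rho_0/2])=\rho_0/2$ is a fixed positive constant while the sup bound tends to $0$. None of this iteration appears in your sketch, and without it the argument does not close.
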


This section is devoted to the proof of Theorem \ref{result0}. On the contrary, we assume the singular set of $u$ is non-empty. Observe that $\mathcal{S}(u)$ is a closed set because $u$ is continuous from Theorem \ref{cont}.
In a neighborhood of  $x \in \Omega \backslash \mathcal S(u)$, $u$ maps into a smooth Riemannian manifold ${\bf H}$, and we can write
\[
u=(u_{\rho},u_{\phi})
\]
in terms of coordinates $(\rho,\phi)$.

Let  $x_0 \in \partial \mathcal{S}(u)$ and  
\[
\alpha :=Ord^u(x_0)>0.
\]
For $r_0>0$ such that $B_{r_0}(x_0) \subset \Omega$, identify $(B_{r_0}(x_0),g) \subset \Omega$ with the Euclidean ball $B_{r_0}(0) \subset \R^n$ via normal coordinates centered at $x_0=0$. Let $u:(B_{r_0}(0),g) \rightarrow \bms$ be the restriction of $u$. We construct the sequence $\{u_{\sigma_i}\}$ of the blow-up maps of $u$ at $x_0$: Define a function $\lambda^u: (0,r_0] \rightarrow (0,\infty)$ by
\[
\lambda^u(\sigma) = \left(\sigma^{1-n} \int_{\partial B_\sigma(0)} d^2(u,u(0)) d\Sigma \right)^{-\frac{1}{2}}.
\]
For $\sigma \in (0,r_0]$, the {\it blow-up map} of $u$ at $x_0$ is given by
\[
u_{\sigma}:(B_1(0),g) \rightarrow \bms, \ \ \ u_{\sigma}(x)=\lambda^u(\sigma) u(\sigma x),
\] where $u_{\sigma}(0)=P_0$ for any $\sigma >0$ and $\lambda P$ is defined as in (\ref{hcs}).
Notice that $u_{\sigma}$ is harmonic since harmonicity is invariant under scaling and monotonicity property (\ref{monotonicityformula0}) implies
$$
2 ^2e^{c/4}\left(\lambda^{u_{\sigma}}\left(\frac{1}{2}\right)\right)^{-2} = 2^{n+1}e^{c/4} \int_{\partial B_{\frac{1}{2}}(0)} d^2_{\bf H}(u_{\sigma},u(0)) d\Sigma \leq e^c\int_{\partial B_1(0)} d^2_{\bf H}(u_{\sigma},u(0)) d\Sigma =e^c
$$
For domain metrics $g$ sufficiently close to Euclidean metric, i.e. for $c$ close to $0,$
\begin{equation} \label{lowerbdlambda0}
1 \leq  \lambda^{u_{\sigma}}\left(\frac{1}{2}\right).
\end{equation}

At this point, we need a tangent map satisfying particular properties. To that end, we produce a sequence $\sigma_i \to 0$ and an nonconstant homogeneous harmonic tangent map $u_*$ following the idea of Appendix I. Initially, $u_*:B_1(0) \to \overline{\bf H}_*$ is not good enough for our purposes since $\overline{\bf H}_*$ is only an abstract NPC space. In Appendix I, we show that in fact we can modify the target so that $u_*$ maps into a concrete NPC space $(\overline{\bf H}_A,d_A)$ defined in section \ref{2.3} and
\begin{equation}\label{distapprox0}
    d_{\overline{\bf H}}(u_{\sigma_i}(\cdot),u_{\sigma_i}(\cdot)) \to d_A(u_*(\cdot),u_*(\cdot)) \text{ uniformly on compact subsets of $B_1(0).$}
\end{equation}
Moreover, $u_*$ is {\it piecewise a function} in the sense of Definition \ref{pw0}.
Here,  $A$ is defined as follows: Let  $\Omega_1,...,\Omega_k$ be the connected components in $B_1(0) \setminus \{x \in B_1(0): u_*(x)= u_*(0)\}.$ Then, $A$ is the set of equivalence classes of $\{1,...,k\}$ such that $\nu \sim \nu'$ if for every pair of points $x \in \Omega_{\nu}$ and $y \in \Omega_{\nu'},$
\begin{equation}\label{A}
d_A(u_*(x),u_*(y)) <  d_A(u_*(x),u_*(0))+d_A(u_*(y),u_*(0)).
\end{equation}
As shown in the proof of Lemma \ref{H_A} in Appendix I, $u_*(\Omega_{\nu})$ and $u_*(\Omega_{\nu'})$ are contained in the same copy of model space $\overline{\bf H}$ and $\big| u_{\sigma_i}^{\phi}(x)-u_{\sigma_i}^{\phi}(y) \big|$ is bounded independent of $\sigma_i$ for $x \in \Omega_{\nu}$ and $y \in \Omega_{\nu'}.$ Note that $|A| \geq 2,$ which is shown in Lemma \ref{|A|}. 

\begin{remark}\label{2pts}
    Lemmas \ref{approx0} -- \ref{s30} below only rely on the fact that $u_*$ is an nonconstant homogeneous harmonic map and piecewise a function and the distance convergence (\ref{distapprox0}).
\end{remark}

Fix a point $x_m \in \Omega_m$ for $m=1,...,k.$ By taking subsequence if necessary and renumbering $\Omega_1,... \Omega_k,$ we can assume
\[
\max_{m=1,...,k}u_{\sigma_i}^{\phi}(x_m)=u_{\sigma_i}^{\phi}(x_k) \geq u_{\sigma_i}^{\phi}(x_{k-1}) \geq ... \geq\min_{m=1,...,k}u_{\sigma_i}^{\phi}(x_m)=u_{\sigma_i}^{\phi}(x_1).
\]
Define an isometry $T_{c_i}: \overline{\bf H} \to \overline{\bf H}$ by setting 
\[
T_{c_i}(P_0)=P_0 \text{ and } T_{c_i}(\rho,\phi)=(\rho,\phi-{c_i}),
\]
where $c_i=\frac{u_{\sigma_i}^{\phi}(x_k)+u_{\sigma_i}^{\phi}(x_1)}{2}.$ Then for all $\sigma_i$'s and corresponding $c_i$'s, 
\[
(T_{c_i} \circ u_{\sigma_i})^{\phi}(x_k)=-(T_{c_i} \circ u_{\sigma_i})^{\phi}(x_1).
\]
\begin{definition}\label{normalized}
By post-composing with this translation, we can assume that the sequence $\{u_{\sigma_i}\}$ satisfies the normalization 
\[
u_{\sigma_i}^{\phi}(x_k)=-u_{\sigma_i}^{\phi}(x_1).
\]We will call these maps the {\it normalized blow-up maps}.
\end{definition}

Next, we define a sequence $\{L_i\}$ from the sequence $\{u_{\sigma_i}\}$ of normalized blow-up maps: First define
\[
L_{m,i}: \Omega_m \to {\bf H}, \ \ \ L_{m,i}(x) = (d_A(u_*(x),u_*(0)),  u_{\sigma_i}^{\phi}(x_m))
\]
and then define
\begin{equation}\label{L_i}
L_i : B_1(0) \to \overline{\bf H}, \ \ \
L_i(x)=
\begin{cases}
L_{m,i}(x) & x \in \Omega_m
\\
P_0 & x \in u_*^{-1}(u_*(0)).
\end{cases}
\end{equation}

\adjustbox{center}{
\begin{tikzpicture}[
    scale=0.7,
    >=Stealth,
    declare function={
        gamma(\x) = 0.15*\x^2; 
    }
]
    \begin{scope}[local bounding box=Fig1]
        \draw[->] (0,-5) -- (0,5) node[above] {$\phi$};
        \draw[->] (0,0) -- (6,0) node[right] {$\rho$};
        \node at (0,-4.5) [below right] {$P_0$};
        
        \draw[black] (0,-4) -- (0,4);
        
        \draw[black] (0,-4) -- (4.5,-4) node[right] {$L_{1,i}$};
        \draw[black] (0,-2) -- (4.5,-2) node[right] {$L_{3,i}$};
        \draw[black] (0,-3) -- (4.5,-3) node[right] {$L_{2,i}$};
        \draw[black] (0,1) -- (4.5,1) node[right] {$L_{k-1,i}$};
        \draw[black] (0,4) -- (4.5,4) node[right] {$L_{k,i}$};
        
        \foreach \y in {-0.28, 0.28} {
            \node at (-0.5,\y) {$\vdots$};
        }
    \end{scope}
    \node[align=center, below] at (Fig1.south) {
    \text{The image of map $L_i$}
    };
    \node[align=center, below] at (current bounding box.south) 
    {\textbf{Figure 2}};
\end{tikzpicture}
}\\

\begin{lemma} \label{approx0}
The map $L_i$ defined above satisfies
\[
d_{\overline{\bf H}}(L_i(\cdot),L_i(\cdot))-d_{\overline{\bf H}}(u_{\sigma_i}(\cdot),u_{\sigma_i}(\cdot)) \to 0 \text{ as } \sigma_i \to 0
\]
uniformly on compact sets of $B_1(0).$
\end{lemma}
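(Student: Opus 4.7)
The plan is to reduce the claim to showing $d_{\overline{\bf H}}(L_i(x),L_i(y))\to d_A(u_*(x),u_*(y))$ uniformly on compact subsets of $B_1(0)\times B_1(0)$; combined with (\ref{distapprox0}) this immediately yields the lemma. Specialising (\ref{distapprox0}) to $y=0$ and using $u_{\sigma_i}(0)=P_0$ gives $u_{\sigma_i}^\rho(x)\to \rho(x):=d_A(u_*(x),u_*(0))$ uniformly on compacta, so the $\rho$-coordinate used to define $L_i$ is exactly the limit of the $\rho$-coordinate of $u_{\sigma_i}$. Throughout I write $c_{m,i}:=u_{\sigma_i}^\phi(x_m)$.

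The argument runs via case analysis according to the decomposition $B_1(0)=u_*^{-1}(u_*(0))\sqcup\bigsqcup_m\Omega_m$. If either of $x,y$ lies in $u_*^{-1}(u_*(0))$, both $d_{\overline{\bf H}}(L_i(x),L_i(y))$ and $d_A(u_*(x),u_*(y))$ simplify directly to either $0$ or $\rho(\cdot)$ of the non-singular argument and match exactly. If $x,y$ lie in the same component $\Omega_m$, the values $L_i(x),L_i(y)$ share the $\phi$-coordinate $c_{m,i}$, so integrating $d\rho^2+\rho^6\,d\phi^2$ along the constant-$\phi$ geodesic gives $d_{\overline{\bf H}}(L_i(x),L_i(y))=|\rho(x)-\rho(y)|$; because $u_*$ is piecewise a function (Definition \ref{pw0}), $u_*(\Omega_m)$ lies on a single radial ray in one copy $\overline{\bf H}_{[m]}$, so $d_A(u_*(x),u_*(y))=|\rho(x)-\rho(y)|$ as well.

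The substantive cases have $x\in\Omega_m$, $y\in\Omega_{m'}$ with $m\ne m'$. When $[m]\ne[m']$ in $A$, the characterisation of $\sim$ recalled just before the lemma gives $|c_{m,i}-c_{m',i}|\to\infty$, and $u_*(x),u_*(y)$ lie in distinct copies of $\overline{\bf H}_A$, so $d_A(u_*(x),u_*(y))=\rho(x)+\rho(y)$. The upper bound $d_{\overline{\bf H}}(L_i(x),L_i(y))\le\rho(x)+\rho(y)$ comes from the concatenated radial path through $P_0$; for the matching lower bound, any path in ${\bf H}$ from $L_i(x)$ to $L_i(y)$ staying above level $\rho=\rho_*$ has length at least $\rho_*^3|c_{m,i}-c_{m',i}|$ (from the $\rho^6\,d\phi^2$ term), so any minimiser must dip to $\rho_*(i)=O(|c_{m,i}-c_{m',i}|^{-1/3})\to 0$, forcing its length to be at least $\rho(x)+\rho(y)-2\rho_*(i)\to\rho(x)+\rho(y)$. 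This pinching mechanism is precisely what Lemmas \ref{cvxsets} and \ref{distest} encode when applied to the convex subsets ${\bf H}[\rho_*(i)]$. When $[m]=[m']$, $|c_{m,i}-c_{m',i}|$ stays uniformly bounded; the identity $d_{\overline{\bf H}}(u_{\sigma_i}(x),u_{\sigma_i}(x_m))\to|\rho(x)-\rho(x_m)|$, combined with the fact that in ${\bf H}$ a distance equal to the radial difference forces the $\phi$-coordinates to coincide, shows that $u_{\sigma_i}^\phi(x)-c_{m,i}\to 0$ uniformly on compact subsets of $\Omega_m$ (where $\rho$ is bounded below). Hence $u_{\sigma_i}(x)$ and $L_i(x)$ are asymptotically the same point of ${\bf H}$ on such sets, and the triangle inequality yields $d_{\overline{\bf H}}(L_i(x),L_i(y))-d_{\overline{\bf H}}(u_{\sigma_i}(x),u_{\sigma_i}(y))\to 0$.

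The main expected obstacle is the $[m]\ne[m']$ case: converting the qualitative $|c_{m,i}-c_{m',i}|\to\infty$ into the sharp quantitative estimate $d_{\overline{\bf H}}(L_i(x),L_i(y))\to\rho(x)+\rho(y)$ via the geodesic-pinching machinery of Lemmas \ref{cvxsets} and \ref{distest}. Uniformity over compact subsets of $B_1(0)\times B_1(0)$ follows because every estimate depends continuously on the uniformly convergent datum $u_{\sigma_i}^\rho\to\rho$, on $\rho(x),\rho(y)$, and on the combinatorial information of $A$; near the singular locus both distances are controlled by the vanishing quantity $\rho(x)+\rho(y)$, so no trouble arises there either.
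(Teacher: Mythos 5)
Your proposal is correct and follows essentially the same route as the paper: reduce the lemma via (\ref{distapprox0}) to showing $d_{\overline{\bf H}}(L_i(x),L_i(y))\to d_A(u_*(x),u_*(y))$, then argue case-by-case according to whether $x,y$ lie in the singular set, in the same component, or in components with $[m]=[m']$ versus $[m]\neq[m']$, using the divergence of the $\phi$-gap for inequivalent components and its boundedness (plus the piecewise-function property) for equivalent ones. The only notable difference is that you justify the sub-claims $u^{\phi}_{\sigma_i}(x)-c_{m,i}\to 0$ and the lower bound in the $[m]\neq[m']$ case by direct metric pinching in $({\bf H},d\rho^2+\rho^6 d\phi^2)$, whereas the paper imports these from the harmonic-map arguments of Appendix I and Lemma \ref{cvxsets}; both justifications are valid.
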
 
\begin{proof}
We claim that for $x \in \Omega_s$ and $y \in \Omega_t$ where $s \nsim t,$
\begin{equation} 
    \label{phicoords0}
    \lim_{i \to \infty}|u^{\phi}_{\sigma_i}(x)-u^{\phi}_{\sigma_i}(y)| = \infty.
\end{equation}
Suppose on the contrary that $|u^{\phi}_{\sigma_i}(x)-u^{\phi}_{\sigma_i}(y)|$ is bounded as $i \to \infty \, (\sigma_i \to 0).$ This implies that there exists $\delta>0$ such that for all $i$ sufficiently large, 
\[
d_{\overline{\bf H}}(u_{\sigma_i}(x),u_{\sigma_i}(y)) <d_{\overline{\bf H}}(u_{\sigma_i}(x),u_{\sigma_i}(0))+d_{\overline{\bf H}}(u_{\sigma_i}(y), u_{\sigma_i}(0))-\delta.
\] 
Then, $d_A(u_*(x),u_*(y))< d_A(u_*(x),u_*(0))+d_A(u_*(y),u_*(0)),$ which contradicts the condition that $s \nsim t.$

Let $K\in B_1(0)$ be a compact set and $\epsilon > 0$ arbitrarily small. We can choose a neighborhood $U$ of $u_*^{-1}(P_0) \subset B_1(0)$ and a positive integer $N_1$ satisfying
\begin{equation}
    d_{\overline{\bf H}}(u_{\sigma_i}(x),u_{\sigma_i}(0)) < \frac{\epsilon}{4} \text{ for any } x \in U \text{  and for all }i \geq N_1
\end{equation}
and
\[
d_{\overline{\bf H}}(u_{\sigma_i}(x),u_{\sigma_i}(0)) \geq \frac{\epsilon}{8} \text{ for any } x\notin K \setminus U \text{ and for all } i \geq N_1.
\]

Let $x,y \in K.$ We treat the following three cases separately. \\
\underline{Case 1.} $x,y \in U.$ 

For $i \geq N_1,$
\begin{eqnarray*}
d_{\overline{\bf H}}(u_{\sigma_i}(x),u_{\sigma_i}(y)) & \leq & d_{\overline{\bf H}}(u_{\sigma_i}(x),P_0)+d_{\overline{\bf H}}(u_{\sigma_i}(y),P_0)\\
& < & \frac{\epsilon}{4}+\frac{\epsilon}{4}\\
& = & \frac{\epsilon}{2}\\
d_{\overline{\bf H}}(L_i(x),L_i(y)) & \leq & d_{\overline{\bf H}}(L_i(x),P_0)+d_{\overline{\bf H}}(L_i(y),P_0)\\
& = & L_i^{\rho}(x)+L_i^{\rho}(y)\\
& = & d_A(u_*(x),u_*(0))+d_A(u_*(y),u_*(0))\\
& \leq & \frac{\epsilon}{4}+\frac{\epsilon}{4}\\
& = & \frac{\epsilon}{2}.\\
\end{eqnarray*}
Thus, for any $x,y \in U,$
\[
|d_{\overline{\bf H}}(L_i(x),L_i(y))-d_{\overline{\bf H}}(u_{\sigma_i}(x),u_{\sigma_i}(y))| < \epsilon.
\]
\underline{Case 2.} $x,y \in K \setminus U$ where $x \in \Omega_s, y \in \Omega_t, s \nsim t.$

For $i \geq N_1, d_{\overline{\bf H}}(u_{\sigma_i}(x),u_{\sigma_i}(0)) \geq \epsilon/8 >0,$ which guarantees that $d_A(u_*(x),u_*(0))$ is bounded away from zero. The fact (\ref{phicoords0}) implies that $\lim_{i \to \infty} d_{\overline{\bf H}}(L_i(x),L_i(y))=d_A(u_*(x),u_*(0))+d_A(u_*(y),u_*(0)).$ Additionally, $u_*(\Omega_s)$ and $u_*(\Omega_t)$ are contained in different copies of $\overline{\bf H},$ which is proved in Appendix I. Thus, (\ref{d_A}) implies that $d_A(u_*(x),u_*(y))=d_A(u_*(x),u_*(0))+d_A(u_*(y),u_*(0)).$ Consequently, there exists an integer $N_2$ large enough and independent of $x,y$ such that for $ i\geq N_2,$ 
\[
\Big|d_{\overline{\bf H}}(L_i(x),L_i(y))-d_A(u_*(x),u_*(y))\Big| < \frac{\epsilon}{2}.  
\]
Furthermore, (\ref{distapprox0}) implies that there is an integer $N_3$ such that for every $i \geq N_3,$
\[
\Big|d_A(u_*(x),u_*(y)) - d_{\overline{\bf H}}(u_{\sigma_i}(x),u_{\sigma_i}(y)) \Big| < \frac{\epsilon}{2}.
\]
Therefore, for all $i \geq \max\{N_2, N_3\},$
\[
    \Big| d_{\overline{\bf H}}(L_i(x),L_i(y))-d_{\overline{\bf H}}(u_{\sigma_i}(x),u_{\sigma_i}(y))\Big| <   \epsilon.
\]
\underline{Case 3.} $x,y \in K \setminus U$ where $x \in \Omega_s, \, y \in \Omega _t$ with $s \sim t.$ 

In this case, $u_*$ maps $\Omega_s$ and $\Omega_t$ to the same copy of $\overline{\bf H}$ and $|u^{\phi}_{\sigma_i}(x)-u^{\phi}_{\sigma_i}(y)|$ is bounded for any $\sigma_i,$ which is proved in the argument of Lemma \ref{H_A}. Recall that $u_*$ is piecewise a function into $\overline{\bf H}_A$ (cf. Lemma \ref{lem:pwfnc0} and Lemma \ref{H_A}). By these facts and (\ref{distapprox0}), we have
\begin{eqnarray*}
   \lim_{i \to \infty} d_{\overline{\bf H}}(L_i(x),L_i(y))&=& \lim_{i \to \infty} d_{\overline{\bf H}}\big((d_A(u_*(x),u_*(0)),u_{\sigma_i}^{\phi}(x_s)), 
(d_A(u_*(y),u_*(0)),u_{\sigma_i}^{\phi}(x_t))\big)\\
&=& \lim_{i \to \infty} d_{\overline{\bf H}}\big((d_A(u_*(x),u_*(0)),u_{\sigma_i}^{\phi}(x)), 
(d_A(u_*(y),u_*(0)),u_{\sigma_i}^{\phi}(y))\big).\\
&=& d_A(u_*(x),u_*(y)).
\end{eqnarray*}
Thus, we can choose $N_4$ to be an integer large sufficiently such that for $i \geq N_4,$
\[
\Big|d_{\overline{\bf H}}(L_i(x),L_i(y))-d_A(u_*(x),u_*(y)) \Big| < \frac{\epsilon}{2}
\]
and
\[
\Big|d_A(u_*(x),u_*(y)) - d_{\overline{\bf H}}(u_{\sigma_i}(x),u_{\sigma_i}(y)) \Big| < \frac{\epsilon}{2}.
\]
Taking the above inequalities together, we have
\[
\Big|d_{\overline{\bf H}}(L_i(x),L_i(y))-d_{\overline{\bf H}}(u_{\sigma_i}(x),u_{\sigma_i}(y))\Big| < \epsilon.
\]
Therefore, for any $\epsilon >0, \text{ we choose } N=\max \{N_1,N_2,N_3, N_4\}$ 
to ensure that for any $x,y \in K$ and for any $i \geq N, \ \left|d_{\overline{\bf H}}(u_{\sigma_i}(x),u_{\sigma_i}(y))-d_{\overline{\bf H}}(L_i(x),L_i(y))\right| < \epsilon.$
\end{proof}

\begin{lemma}\label{u&L0}
    $d_{\overline{\bf H}}(u_{\sigma_i},L_i) \to 0$ uniformly on compact subsets of $B_1(0).$
\end{lemma}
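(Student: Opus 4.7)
The plan is to derive the claimed uniform convergence by contradiction, combining Lemma~\ref{approx0} with the strict reverse triangle inequality in $\overline{\bf H}$ with apex $P_0$.

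First, applying Lemma~\ref{approx0} with $y=0$ (and using $u_{\sigma_i}(0) = P_0 = L_i(0)$), I get
\[
\big|u_{\sigma_i}^{\rho}(x) - d_A(u_*(x),u_*(0))\big| = \big|d_{\overline{\bf H}}(u_{\sigma_i}(x),P_0) - d_{\overline{\bf H}}(L_i(x),P_0)\big| \to 0
\]
uniformly on compact subsets of $B_1(0)$, so the $\rho$-coordinates of $u_{\sigma_i}(x)$ and $L_i(x)$ agree in the limit. Now suppose for contradiction there exist a compact $K \subset B_1(0)$, an $\epsilon>0$, a subsequence, and $x_i \in K$ with $d_{\overline{\bf H}}(u_{\sigma_i}(x_i), L_i(x_i)) \geq \epsilon$, and by compactness assume $x_i \to x^* \in K$. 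If $x^* \in u_*^{-1}(P_0)$, Lipschitz continuity of $u_*$ (Theorem~\ref{cont}) gives $d_A(u_*(x_i),u_*(0)) \to 0$, hence $u_{\sigma_i}^\rho(x_i) \to 0$ and $L_i^\rho(x_i) \to 0$; then $d_{\overline{\bf H}}(u_{\sigma_i}(x_i), L_i(x_i)) \leq u_{\sigma_i}^\rho(x_i) + L_i^\rho(x_i) \to 0$, contradicting $\geq \epsilon$. Therefore $x^* \in \Omega_m$ for some $m$, and since $\Omega_m$ is open, $x_i \in \Omega_m$ for all large $i$.

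Let $x_m \in \Omega_m$ be the reference point fixed in the construction of $L_i$. Applying Lemma~\ref{approx0} with $y=x_m$, and noting that $L_i(x_i)$ and $L_i(x_m)$ share the $\phi$-coordinate $u_{\sigma_i}^\phi(x_m)$ while radial curves of constant $\phi$ are geodesics in $\overline{\bf H}$ by (\ref{gequ}), I get
\[
d_{\overline{\bf H}}(u_{\sigma_i}(x_i), u_{\sigma_i}(x_m)) \to \big|d_A(u_*(x^*),u_*(0)) - d_A(u_*(x_m),u_*(0))\big| =: \Delta.
\]
Combined with the reverse triangle inequality $d_{\overline{\bf H}}(u_{\sigma_i}(x_i), u_{\sigma_i}(x_m)) \geq |u_{\sigma_i}^\rho(x_i) - u_{\sigma_i}^\rho(x_m)| \to \Delta$, this produces asymptotic equality in the reverse triangle inequality with apex $P_0$.

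The main obstacle is upgrading this asymptotic equality to $|u_{\sigma_i}^\phi(x_i) - u_{\sigma_i}^\phi(x_m)| \to 0$. In $\overline{\bf H}$, the geodesics emanating from $P_0$ are precisely the radial rays of constant $\phi$, so for any two points with $\rho_1,\rho_2>0$ on distinct rays the reverse triangle inequality with apex $P_0$ is strict; moreover, as $|\Delta\phi| \to \infty$ the distance $d_{\overline{\bf H}}((\rho_1,\phi_1),(\rho_2,\phi_2))$ approaches $\rho_1+\rho_2$, since constant-$\rho$ arcs have length $\rho^3|\Delta\phi|$ and the path through $P_0$ eventually becomes optimal. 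If $|u_{\sigma_i}^\phi(x_i) - u_{\sigma_i}^\phi(x_m)| \not\to 0$, then after applying the isometric $\phi$-translation $T_{-u_{\sigma_i}^\phi(x_m)}$ I extract a subsequence on which either the $\phi$-difference converges to some $\phi^* \neq 0$ (yielding limit distance strictly greater than $\Delta$) or diverges to $\pm\infty$ (yielding limit distance $d_A(u_*(x^*),u_*(0)) + d_A(u_*(x_m),u_*(0)) > \Delta$). Both cases contradict the asymptotic equality, so the $\phi$-difference vanishes; combined with the convergence of the $\rho$-coordinates and the smoothness of $g_{\bf H}$ away from $P_0$, this yields $d_{\overline{\bf H}}(u_{\sigma_i}(x_i), L_i(x_i)) \to 0$, contradicting $\geq \epsilon$ and completing the argument.
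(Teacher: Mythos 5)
Your proof is correct, but it reaches the key conclusion by a genuinely different route than the paper. The paper's Case 2 simply quotes, from the proof of Lemma~\ref{lem:pwfnc0} in Appendix~I, the locally uniform convergences $u_{\sigma_i}^{\rho}\to d_A(u_*(\cdot),u_*(0))$ and $|u_{\sigma_i}^{\phi}-u_{\sigma_i}^{\phi}(x_m)|\to 0$ on each component $\Omega_m$ — i.e.\ it reuses the PDE/compactness analysis of the harmonic map equation — and the lemma follows immediately. You instead treat Lemma~\ref{approx0} as a black box and recover the vanishing of the $\phi$-oscillation purely metrically: asymptotic equality in the reverse triangle inequality with apex $P_0$, combined with the fact that the radial rays $\{\phi=\mathrm{const}\}$ are the only geodesics realizing $d_{\overline{\bf H}}(P,Q)=|\rho(P)-\rho(Q)|$, forces $|u_{\sigma_i}^{\phi}(x_i)-u_{\sigma_i}^{\phi}(x_m)|\to 0$. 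This is a legitimate and arguably more self-contained argument (it also handles the varying point $x_i$ via a clean subsequence/contradiction scheme, sidestepping the subsequence bookkeeping in Appendix~I). Two small points you should make explicit: (i) your case analysis (``$\phi$-difference converges to $\phi^*\neq 0$ or diverges'') tacitly uses that $s\mapsto d_{\overline{\bf H}}((\rho_1,0),(\rho_2,s))$ is non-decreasing, which follows from the distance-non-increasing maps $(\rho,\phi)\mapsto(\rho,\lambda\phi)$, $0<\lambda\le 1$ (pullback metric $d\rho^2+\lambda^2\rho^6d\phi^2\le g_{\bf H}$); with this, the single bound $d_{\overline{\bf H}}(u_{\sigma_i}(x_i),u_{\sigma_i}(x_m))\ge D(\rho_1^i,\rho_2^i,\delta)\to D(\rho^*,\rho_m,\delta)>\Delta$ disposes of both subcases at once; and (ii) the Lipschitz continuity of $u_*$ is the statement from Appendix~I (via \cite[Theorem 3.7]{3}), not Theorem~\ref{cont}, which concerns the maps $u_{\sigma_i}$ themselves. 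Neither issue is a gap in substance.
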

\begin{proof}
Let $K \in B_1(0)$ be a compact set. Lemma \ref{approx0} implies that in $K,$
\begin{eqnarray*}
\lim_{\sigma_i \to 0} d_{\overline{\bf H}}(u_{\sigma_i}(x),P_0) 
& = & \lim_{i \to \infty} d_{\overline{\bf H}}(L_i(x), L_i(0)) \\
& = & \lim_{i \to \infty} d_{\overline{\bf H}}(L_i(x),P_0).
\end{eqnarray*}
Following the similar idea to the proof of Lemma \ref{approx0}, we proceed by cases:\\
\underline{Case 1.}
For $\epsilon > 0$ arbitrarily small, choose a neighborhood $U$ of $u_*^{-1}(P_0) \text{ in } B_1(0)$ such that there exists a positive integer $N_1$ satisfying $d_{\overline{\bf H}}(u_{\sigma_i}(x),P_0) < \frac{\epsilon}{2}$ for any $x \in U \text{  and for all }i \geq N_1.$ Then, let $x\in U,$ 
    \[
    d_{\overline{\bf H}}(u_{\sigma_i}(x),L_i(x)) \leq d_{\overline{\bf H}}(u_{\sigma_i}(x),P_0)+d_{\overline{\bf H}}(L_i(x),P_0) < \ \epsilon.
    \]
\underline{Case 2.}
Let $x \in (K\setminus U) \cap \Omega_m,$ which implies that $d_A(u_*(x),u_*(0))$ is bounded below by $\delta_0>0.$ In the proof of Theorem \ref{lem:pwfnc0}, we show that on $(K \setminus U) \cap \Omega_m,$
\[
u_{\sigma_i}^{\rho}(x) \to d_A(u_*(x),u_*(0)) \mbox{ and } \Big|u_{\sigma_i}^{\phi}(x)-u_{\sigma_i}^{\phi}(x_m) \Big| \to 0.
\]
Thus, for any $\epsilon>0,$ there exists a positive integer $N_2$ such that for all $i \geq N_2,$
\[
d_{\overline{\bf H}}(u_{\sigma_i}(x),L_i(x))=d_{\overline{\bf H}}\big((u_{\sigma_i}^{\rho}(x),u_{\sigma_i}^{\phi}(x)),(d_A(u_*(x),u_*(0)),u_{\sigma_i}^{\phi}(x_m))\big) < \epsilon.
\]

Therefore, pick $N=\max\{N_1,N_2\}.$ For any $\epsilon>0,$ there exists a integer $N$ so that for all $i \geq N,$
\[
d_{\overline{\bf H}}(u_{\sigma_i}(x),L_i(x))< \epsilon \mbox{ for all } x \in K.
\]
\end{proof}

\adjustbox{center}{
\begin{tikzpicture}[
    scale=0.8,
    >=Stealth,
]

\begin{scope}[local bounding box=Fig2]
    \draw[->] (0,-5) -- (0,5) node[above] {$\phi$};
    \draw[->] (0,0) -- (6,0) node[right] {$\rho$};
    \node at (0,-4) [below right] {$P_0$};
    
    \draw[black] (0,-4) -- (0,4);
    
    \draw[black] (0.4,-3) -- (4.7,-3) node[right] {$L_{2,i}$};
    \draw[black] (0.2,-2) -- (4.7,-2) node[right] {$L_{3,i}$};
    \draw[black] (0.2,1) -- (4.7,1) node[right] {$L_{k-2,i}$};
    \draw[black] (0.4,3) -- (4.7,3) node[right] {$L_{k-1,i}$};
    
    \draw[black]
      (4.7,4) node[right]{$\gamma_{\sigma_i}$} to[out=183, in=60, looseness=.5] (0.4,3) 
      to[out=-130, in=90, looseness=.5] (0.2,1.5)
      to[out=-90, in=90, looseness=.5] (0.2,-1.5)
      to[out=-90, in=130, looseness=.5] (0.4,-3)
      to[out=-60, in=-183, looseness=.5] (4.7,-4) ;

        \foreach \y in {-0.25, 0.25, 2.25} {
            \node at (-0.5,\y) {$\vdots$};
        }
\end{scope}

    \node[align=center, below] at (Fig2.south) {
        \text{The Image of $\Lambda_{\sigma_i}$}
    };
    \node[align=center,below] at (current bounding box.south)
    {\textbf{Figure 3}};
\end{tikzpicture}
}

Let $\gamma_{\sigma_i}:\R \to {\bf H}$ be a symmetric geodesic passing through $(1,u_{\sigma_i}^{\phi}(x_1)) \text{ and } (1,u_{\sigma_i}^{\phi}(x_k)),$ and let $\Gamma_{\sigma_i}$ be its image. By \cite[Lemma 3.17]{1}, $d_{\bf H}(\Gamma_{\sigma_i}, \text{Im} \, L_{1,i} \cup \text{Im} \, L_{k,i} \cup P_0) \to 0$ as $\sigma_i \to 0.$ Moreover, since $|A| \geq 2,$  (\ref{phicoords0}) implies that there exist  $1 \leq s,t \leq k$ and $s \nsim t$ such that for any $x \in \Omega_s$ and $y \in \Omega_t, \ \big|u_{\sigma_i}^{\phi}(x)-u_{\sigma_i}^{\phi}(y) \big|$ is unbounded as $\sigma_i$ is small enough. Subsequently, $u_{\sigma_i}^{\phi}(x_k) \to \infty$ and $u_{\sigma_i}^{\phi}(x_1) \to -\infty$ as $i \to \infty,$ which results in
\begin{equation}\label{rho_sigma}
\rho_{\sigma_i}:=d_{\overline{\bf H}}(\gamma_{\sigma_i}(0), P_0) \to 0.
\end{equation}
Denote the intersection of $\Gamma_{\sigma_i}$ and the image of $L_{m,i}, m=2,...,k-1,$ as $\{P_2,...,P_{k-1}\}$ correspondingly. Then, $d_{\overline{\bf H}}(P_m,P_0)=P_m^{\rho}$ also converges to zero as $\sigma_i \to 0$. Thus, we define
\begin{equation}\label{l0}
    \Lambda_{\sigma_i}:= \Gamma_{\sigma_i} \cup \{(\rho,\phi): \rho \geq P_m^{\rho}, \phi=P_m^{\phi},m=2,...,k-1\}.
\end{equation}
As $\sigma_i \to 0, \ \Lambda_{\sigma_i}$ is as in the Figure 3 and 
\begin{equation}\label{l&L0}
   \sup_{x \in B_1(0)} d_{\overline{\bf H}}(\Lambda_{\sigma_i},\text{Im} \, L_i)  \to  0.
\end{equation}
Combining Lemma \ref{u&L0} and (\ref{l&L0}) together, we have the following approximation:

\begin{lemma} \label{linearapproxblowupsym0}
Let  $\{u_{\sigma_i}\}$ be the blow-up maps. Then,
\[
\lim_{\sigma_i \to 0} \sup_{x \in B_1(0)} d_{\overline{\bf H}}(u_{\sigma_i}(x),\Lambda_{\sigma_i}) = 0.
\]
\end{lemma}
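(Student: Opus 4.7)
\emph{Proof plan.} The lemma is a direct triangle-inequality consequence of Lemma~\ref{u&L0} together with the uniform set-theoretic approximation (\ref{l&L0}). For any $x \in B_1(0)$, I would write
\[
d_{\overline{\bf H}}(u_{\sigma_i}(x), \Lambda_{\sigma_i}) \;\leq\; d_{\overline{\bf H}}(u_{\sigma_i}(x), L_i(x)) \;+\; d_{\overline{\bf H}}(L_i(x), \Lambda_{\sigma_i}).
\]
The first summand converges to zero uniformly on compact subsets of $B_1(0)$ by Lemma~\ref{u&L0}. The second summand, since $L_i(x) \in \mathrm{Im}\, L_i$, is bounded above by $\sup_{y \in \mathrm{Im}\, L_i} d_{\overline{\bf H}}(y, \Lambda_{\sigma_i})$, which vanishes in the limit by (\ref{l&L0}).

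The one remaining technical point is to upgrade from convergence on compact subsets of $B_1(0)$ to uniform convergence on the entire open ball. For this I would exploit that $u_{\sigma_i}(x) = \lambda^u(\sigma_i) u(\sigma_i x)$ is in fact defined on $B_{r_0/\sigma_i}(0)$, which strictly contains $\overline{B_1(0)}$ as soon as $\sigma_i < r_0$; the constructions of the pullback limit $u_*$, the connected components $\Omega_m$, the rays $L_{m,i}$, and the map $L_i$ all extend verbatim to this enlarged domain. Thus $\overline{B_1(0)}$ is a compact subset of the domain to which Lemma~\ref{u&L0} applies, and the uniform bound on the first summand above may be taken over all of $B_1(0)$.

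Taking the supremum over $x \in B_1(0)$ in the displayed inequality and then letting $\sigma_i \to 0$, both summands on the right tend to zero, which yields the desired limit. The main obstacle is nothing more than this boundary-of-$B_1(0)$ bookkeeping; conceptually, the lemma is a repackaging of Lemma~\ref{u&L0} (which says $u_{\sigma_i}$ is well approximated by the horizontal-ray model $L_i$) and (\ref{l&L0}) (which says $\mathrm{Im}\, L_i$ is in turn well approximated by the symmetric geodesic $\Gamma_{\sigma_i}$ together with the intermediate rays) into a single, geometrically transparent statement.
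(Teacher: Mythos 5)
Your proposal is correct and is essentially the paper's own argument: the paper states Lemma~\ref{linearapproxblowupsym0} precisely as the combination, via the triangle inequality through $L_i(x)$, of Lemma~\ref{u&L0} and the approximation (\ref{l&L0}). Your additional remark on upgrading from compact subsets of $B_1(0)$ to the full ball (by noting $u_{\sigma_i}$ and the whole construction live on the larger ball $B_{r_0/\sigma_i}(0)$) addresses a bookkeeping point the paper passes over silently, and is a legitimate way to close it.
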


Given arbitrary $\epsilon>0$, we define $R, r>0$ as follows:  
\begin{itemize}
\item Let $R \in (\frac{7}{8},1)$ such that 
\begin{equation} \label{ep0}
m(B_1(0) \backslash B_R(0)) < \frac{\epsilon}{2},
\end{equation}
where measure $m$ is induced from the domain metric $g$ in $B_1(0).$
\item Let  $r>0$ such that
\begin{equation} \label{fep0}
m(\{x \in B_R(0):  d_A(u_*(x),u_*(0))<2r\})<\frac{\epsilon}{2}.
\end{equation}
\end{itemize}

\begin{lemma} \label{fits0}
Let  $\{u_{\sigma_i}\}$ be the blow-up maps.   For  $R, r\in (0,1)$ as above, there exists $\overline{\sigma_1}>0$ such that
\[
u^{-1}_{\sigma_i}(B_r(P_0)) \cap B_R(0)  \subset \{x \in B_R(0):  d_A(u_*(0),u_*(x))<2r\},  \ \ \forall \sigma_i \in (0,\overline{\sigma_1}].
 \]
\end{lemma}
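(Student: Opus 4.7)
The plan is to reduce the statement to the uniform approximation of $u_{\sigma_i}$ by the model maps $L_i$ already established in Lemma~\ref{u&L0}. The closed ball $\overline{B_R(0)}$ is compact in $B_1(0)$ since $R<1$, so Lemma~\ref{u&L0} applies: there exists $\overline{\sigma_1}>0$ such that for every $\sigma_i\in(0,\overline{\sigma_1}]$,
\[
\sup_{x\in \overline{B_R(0)}} d_{\overline{\bf H}}\bigl(u_{\sigma_i}(x), L_i(x)\bigr) < r.
\]
This is where the parameter $\overline{\sigma_1}$ comes from. Note that the choice of $R$ and $r$ in \eqref{ep0}--\eqref{fep0} plays no role at this step; it will only be needed later to control the measure of the exceptional set.

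Now fix any $\sigma_i\le\overline{\sigma_1}$ and any $x\in B_R(0)$ with $u_{\sigma_i}(x)\in B_r(P_0)$, i.e.\ $d_{\overline{\bf H}}(u_{\sigma_i}(x),P_0)<r$. Since $u_{\sigma_i}(0)=P_0$, the triangle inequality yields
\[
d_{\overline{\bf H}}(L_i(x),P_0)\;\le\;d_{\overline{\bf H}}(L_i(x),u_{\sigma_i}(x))+d_{\overline{\bf H}}(u_{\sigma_i}(x),P_0)\;<\;r+r\;=\;2r.
\]
It remains to identify $d_{\overline{\bf H}}(L_i(x),P_0)$ with $d_A(u_*(x),u_*(0))$. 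Inspecting the definition~\eqref{L_i} of $L_i$, either $x\in u_*^{-1}(u_*(0))$, in which case $L_i(x)=P_0$ and simultaneously $d_A(u_*(x),u_*(0))=0$, or $x\in\Omega_m$ for some $m$, in which case $L_i(x)=(d_A(u_*(x),u_*(0)),u_{\sigma_i}^\phi(x_m))$ and so $d_{\overline{\bf H}}(L_i(x),P_0)=L_i^\rho(x)=d_A(u_*(x),u_*(0))$. In either case the two quantities coincide, and the displayed bound becomes $d_A(u_*(x),u_*(0))<2r$, which is exactly the inclusion claimed.

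There is no real obstacle here: the lemma is essentially an immediate consequence of the uniform approximation $u_{\sigma_i}\approx L_i$ on compact subsets of $B_1(0)$ together with the fact, built into the definition of $L_i$, that its $\overline{\bf H}$-distance to $P_0$ equals the $\overline{\bf H}_A$-distance $d_A(u_*(\cdot),u_*(0))$. The only mildly delicate point is that one must handle the two cases in the piecewise definition of $L_i$, but in both cases the identification $d_{\overline{\bf H}}(L_i(x),P_0)=d_A(u_*(x),u_*(0))$ holds by inspection.
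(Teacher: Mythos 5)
Your proof is correct. The key inputs you use are sound: $\overline{B_R(0)}$ is indeed a compact subset of $B_1(0)$, so Lemma~\ref{u&L0} supplies the uniform bound $\sup_{\overline{B_R(0)}}d_{\overline{\bf H}}(u_{\sigma_i},L_i)<r$ for all small $\sigma_i$; the identity $d_{\overline{\bf H}}(L_i(x),P_0)=L_i^{\rho}(x)=d_A(u_*(x),u_*(0))$ holds in both branches of the definition \eqref{L_i} (and is the same identification the paper itself uses in Case~1 of the proof of Lemma~\ref{approx0}); and the triangle inequality then closes the argument. Your route is genuinely different in presentation from the paper's: the paper argues by contradiction, extracting from a putative sequence of bad points $x_i$ a convergent subsequence $x_{i_j}\to x_*$ in $\overline{B_R(0)}$ and deriving $r\ge d_A(u_*(x_*),u_*(0))\ge 2r$ from the pullback convergence \eqref{distapprox0} and continuity of $u_*$, without ever invoking $L_i$. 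Both arguments ultimately rest on the same fact, namely the uniform convergence of $d_{\overline{\bf H}}(u_{\sigma_i}(\cdot),P_0)$ to $d_A(u_*(\cdot),u_*(0))$ on $\overline{B_R(0)}$; your version is direct and quantitative (it exhibits how $\overline{\sigma_1}$ is determined by the approximation rate), at the cost of routing through the stronger Lemma~\ref{u&L0}, while the paper's is softer but needs only \eqref{distapprox0}. You are also right that the specific measure-theoretic choices of $R$ and $r$ in \eqref{ep0}--\eqref{fep0} are irrelevant here.
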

\begin{proof}
Assume on the contrary that $\sigma_i \rightarrow 0$ and, for each $i \in \N$, there exists
\[
x_i \in  \left( u^{-1}_{\sigma_i}(B_r(P_0))  \cap B_R(0) \right) \backslash \{x \in B_R(0):  d_A(u_*(x),u_*(0))<2r\}.
\]
Take a subsequence $\{x_{i_j}\}$ of $\{x_i\}$ such that $x_{i_j} \rightarrow x_* \in \overline{u_*^{-1}(B_r(P_0)) \cap B_R(0)}$ by compactness. Then,
\begin{eqnarray*}
r  \geq  d_A(u_*(x_*),u_*(0)) \geq 2r,
\end{eqnarray*}
which is a contradiction.
\end{proof}

\begin{lemma} \label{bigawayfromPnot0}
For $r >0$ as above, there exists $\overline{\sigma_2} \in (0,1)$ such that 
\[
d_{\overline{\bf H}}(\Gamma_{\rho_{\sigma_i}/2} \backslash B_r(P_0), \Gamma_{\rho_{\sigma_i}} ) >\frac{r}{2}, \ \ \ \forall \sigma_i \in (0,\overline{\sigma_2}].
\]
\end{lemma}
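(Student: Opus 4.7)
The plan is to derive this as a direct consequence of Lemma \ref{cvxsets} combined with the convergence $\rho_{\sigma_i} \to 0$ already recorded in equation (\ref{rho_sigma}). The statement does not involve the blow-up maps $\{u_{\sigma_i}\}$ themselves, only the geometry of the symmetric geodesics $\Gamma_{\rho_{\sigma_i}}$ and $\Gamma_{\rho_{\sigma_i}/2}$, so no further analysis of the harmonic map is needed.

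First I would recall that by the construction preceding (\ref{rho_sigma}), the sequence $\rho_{\sigma_i} = d_{\overline{\bf H}}(\gamma_{\sigma_i}(0), P_0)$ tends to $0$ as $\sigma_i \to 0$, because $u_{\sigma_i}^{\phi}(x_k) \to \infty$ and $u_{\sigma_i}^{\phi}(x_1) \to -\infty$ force the symmetric geodesic $\gamma_{\sigma_i}$ passing through $(1, u_{\sigma_i}^{\phi}(x_1))$ and $(1, u_{\sigma_i}^{\phi}(x_k))$ to have arbitrarily small $\rho$-coordinate at $s=0$. Next I would apply Lemma \ref{cvxsets} with $\rho_0 = \rho_{\sigma_i}$, giving
\[
\lim_{\sigma_i \to 0} d_{\overline{\bf H}}(\Gamma_{\rho_{\sigma_i}}, \Gamma_{\rho_{\sigma_i}/2} \setminus B_r(P_0)) = r.
\]
Since $r > r/2$, the definition of the limit yields some $\overline{\sigma_2} \in (0,1)$ such that the distance exceeds $r/2$ for every $\sigma_i \in (0, \overline{\sigma_2}]$, which is precisely the conclusion.

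There is no genuine obstacle here; the lemma is essentially a bookkeeping step isolating the quantitative separation of $\Gamma_{\rho_{\sigma_i}}$ from $\Gamma_{\rho_{\sigma_i}/2} \setminus B_r(P_0)$ that will feed into later arguments (for instance, in conjunction with Lemma \ref{distest} and Lemma \ref{fits0} to trap the image of $u_{\sigma_i}$ inside an appropriate convex region). The only point requiring a moment's care is to make sure that $r$ is fixed beforehand (as arranged in (\ref{fep0})) so that the convergence in Lemma \ref{cvxsets} can be invoked with a uniform $r$ while $\rho_0 = \rho_{\sigma_i}$ varies.
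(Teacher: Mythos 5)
Your proposal is correct and matches the paper's own proof, which likewise deduces the lemma directly from the convergence $\rho_{\sigma_i}\to 0$ recorded in (\ref{rho_sigma}) together with Lemma~\ref{cvxsets}; you have merely spelled out the limit argument that the paper leaves implicit.
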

\begin{proof}
This follows from the fact that $\rho_{\sigma_i} \rightarrow 0$ as $\sigma_i \rightarrow 0$ and Lemma~\ref{cvxsets}.
\end{proof}

\begin{lemma} \label{s30}
For any $\epsilon>0$, let $R,r>0$ be as in (\ref{ep0}) and (\ref{fep0}). Then there exists $\overline{\sigma_3}>0$ such that $\forall \sigma_i \leq (0,\overline{\sigma_3}],$
\begin{equation} \label{firstone0}
\sup_{x \in B_R(0)} d_{\overline{\bf H}}(u_{\sigma_i}(x), {\bf H}[{\rho_{\sigma_i}}/2]) <\frac{r}{4}, 
\end{equation}
and
\begin{equation} \label{secondone0}
m\left( \{x \in B_1(0):  u_{\sigma_i} (x) \notin {\ms}[\rho_{\sigma_i}/2] \right\})<\epsilon. 
\end{equation}
\end{lemma}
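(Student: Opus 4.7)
My plan is to establish (\ref{firstone0}) and (\ref{secondone0}) separately, in both cases leveraging the uniform approximation $d_{\overline{\bf H}}(u_{\sigma_i}(\cdot),\Lambda_{\sigma_i}) \to 0$ from Lemma \ref{linearapproxblowupsym0}. The geometric observation that drives the whole argument is that $\Lambda_{\sigma_i} \subset {\bf H}[\rho_{\sigma_i}]$: the symmetric geodesic $\Gamma_{\sigma_i} = \Gamma_{\rho_{\sigma_i}}$ is by construction the boundary of ${\bf H}[\rho_{\sigma_i}]$, and each ray $\{(\rho,P_m^\phi):\rho \geq P_m^\rho\}$ starts at $P_m \in \Gamma_{\sigma_i}$ and moves in the direction of increasing $\rho$, hence stays on the side of $\Gamma_{\sigma_i}$ opposite to $P_0$. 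Combined with Lemma \ref{geomH}(a) this yields $\Lambda_{\sigma_i} \subset {\bf H}[\rho_{\sigma_i}] \subset {\bf H}[\rho_{\sigma_i}/2]$.

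With this inclusion the first estimate is immediate: $d_{\overline{\bf H}}(u_{\sigma_i}(x),{\bf H}[\rho_{\sigma_i}/2]) \leq d_{\overline{\bf H}}(u_{\sigma_i}(x),\Lambda_{\sigma_i})$, and Lemma \ref{linearapproxblowupsym0} drives the right-hand side below $r/4$ uniformly in $x$ once $\sigma_i$ is small enough. For the measure estimate I would split $B_1(0) = B_R(0) \cup (B_1(0) \backslash B_R(0))$ so that (\ref{ep0}) contributes at most $\epsilon/2$ from the complement. For $x \in B_R(0)$ with $u_{\sigma_i}(x) \notin {\bf H}[\rho_{\sigma_i}/2]$, either $u_{\sigma_i}(x) \in B_r(P_0)$ — in which case Lemma \ref{fits0} places $x$ in the set $\{y \in B_R(0): d_A(u_*(y),u_*(0)) < 2r\}$ of measure at most $\epsilon/2$ by (\ref{fep0}) — or else $u_{\sigma_i}(x)$ lies in the complement $C$ introduced in Lemma \ref{distest}. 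I would rule out the second alternative: every geodesic from a point of $\Lambda_{\sigma_i} \subset {\bf H}[\rho_{\sigma_i}]$ to a point of $C$ must cross $\Gamma_{\sigma_i}$, so
\[
d_{\overline{\bf H}}(\Lambda_{\sigma_i},C) \geq d_{\overline{\bf H}}(\Gamma_{\rho_{\sigma_i}},C) \geq d_{\overline{\bf H}}(\Gamma_{\rho_{\sigma_i}/2}\backslash B_r(P_0),\Gamma_{\rho_{\sigma_i}}) > \frac{r}{2}
\]
by Lemmas \ref{distest} and \ref{bigawayfromPnot0}, which is incompatible with $d_{\overline{\bf H}}(u_{\sigma_i}(x),\Lambda_{\sigma_i}) < r/4$. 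Summing the two contributions gives (\ref{secondone0}), and I would take $\overline{\sigma_3}$ to be the smallest threshold required by Lemmas \ref{linearapproxblowupsym0}, \ref{fits0}, and \ref{bigawayfromPnot0}.

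The only step that is not essentially automatic is the geometric identification $\Lambda_{\sigma_i} \subset {\bf H}[\rho_{\sigma_i}]$; once this is recorded, the separation estimate of Lemma \ref{distest} — which a priori only concerns $\Gamma_{\sigma_i}$ — upgrades to all of $\Lambda_{\sigma_i}$ and supplies the contradiction that closes the argument. Everything else is a matter of choosing $\sigma_i$ small enough to trigger each of the named lemmas simultaneously.
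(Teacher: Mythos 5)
Your proposal is correct and follows essentially the same route as the paper: (\ref{firstone0}) via $\Lambda_{\sigma_i}\subset{\bf H}[\rho_{\sigma_i}/2]$ and Lemma~\ref{linearapproxblowupsym0}, and (\ref{secondone0}) by showing that any $x$ with $u_{\sigma_i}(x)\notin{\ms}[\rho_{\sigma_i}/2]\cup B_r(P_0)$ would violate the $r/4$-approximation because of the $r/2$-separation coming from Lemmas~\ref{distest} and~\ref{bigawayfromPnot0}, so the bad set sits inside $\left(u_{\sigma_i}^{-1}(B_r(P_0))\cap B_R(0)\right)\cup\left(B_1(0)\backslash B_R(0)\right)$, controlled by (\ref{fep0}) and (\ref{ep0}). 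Your crossing argument upgrading the separation from $\Gamma_{\rho_{\sigma_i}}$ to all of $\Lambda_{\sigma_i}$ is a slightly more explicit justification of a step the paper states as an equality of distances, but the substance is identical.
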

\begin{proof}
Following Lemma~\ref{fits0} and Lemma~\ref{bigawayfromPnot0}, pick $\overline{\sigma_1}, \overline{\sigma_2}>0$ such that 
\begin{equation} \label{lem410}
 u^{-1}_{\sigma_i}(B_r(P_0))  \cap B_R(0)  \subset \{x \in B_R(0):  d_A(u_*(x),u_*(0))<2r\},  \ \ \forall {\sigma_i} \in (0,\overline{\sigma_1}],
\end{equation}
and
\begin{equation} \label{lem420}
d_{\overline{\bf H}}(\Gamma_{\rho_{\sigma_i}/2} \backslash B_r(P_0), \Gamma_{\rho_{\sigma_i}} ) >\frac{r}{2}, \ \ \ \forall {\sigma_i} \in (0,\overline{\sigma_2}].
\end{equation}
Following Lemma~\ref{linearapproxblowupsym0}, choose $\overline{\sigma_3}  \leq \min \{\overline{\sigma_1}, \overline{\sigma_2}\}$ such that 
\begin{equation} \label{lem430}
\sup_{x \in B_R(0)} d_{\overline{\bf H}}(u_{\sigma_i}(x), \Lambda_{\sigma_i}) <\frac{r}{4}, \ \ \ \forall \sigma_i \in (0, \overline{\sigma_3}].
\end{equation}
Since $\Lambda_{\sigma_i} \subset {\bf H}[{\rho_{\sigma_i}}/2]$, we have $ d_{\overline{\bf H}}(u_{\sigma_i}(x), {\bf H}[{\rho_{\sigma_i}}/2])  \leq d_{\overline{\bf H}}(u_{\sigma_i}(x), \Lambda_{\sigma_i})$ which combined with (\ref{lem430}) implies inequality (\ref{firstone0}).

Next, we prove that (\ref{lem430}) implies (\ref{secondone0}).
If $u_{\sigma_i}(x) \notin {\ms}[\rho_{\sigma_i}/2] \cup  B_r(P_0)$, then Lemmas ~\ref{distest} and ~\ref{bigawayfromPnot0} imply
\[
d_{\overline{\ms}}(u_{\sigma_i}(x), \Lambda_{\sigma_i}) = d_{\overline{\ms}}(u_{\sigma_i}(x), \Gamma_{\rho_{\sigma_i}}) \geq d_{\overline{\bf H}}(\Gamma_{{\rho_{\sigma_i}}/2} \backslash B_r(P_0), \Gamma_{\rho_{\sigma_i}} ) >\frac{r}{2}
\]
which in turn implies $x \in B_1(0) \backslash B_R(0)$ by (\ref{lem430}).  
 In other words,
 \begin{eqnarray*}
\left\{x \in B_1(0):  u_{\sigma_i}(x) \notin {\ms}[{\rho_{\sigma_i}}/2]\right\} & \subset&  u^{-1}_{\sigma_i}(B_r(P_0))   \cup (B_1(0) \backslash B_R(0))
\\
&= &
\left(   u^{-1}_{\sigma_i}(B_r(P_0))  \cap B_R(0) \right)  \cup (B_1(0) \backslash B_R(0))
\end{eqnarray*}
which, in light of  (\ref{ep0}), (\ref{fep0}) and Lemma \ref{fits0}, proves the assertion.
\end{proof}

\label{sec:c1c20}
Now we are ready to define constants $c_1, c_2 \text{ and } \sigma_0$ which will be fixed throughout:
\begin{itemize}
\item Let $c_1>0$ be a constant such that 
for any $t \in [\frac{5}{8},\frac{7}{8}]$, and any subharmonic function $f$ defined on $(B_t(0),g)$ w.r.t. Riemannian metric $g,$
\[
\sup_{B_{\frac{1}{2}}(0)} f \leq 
c_1\int_{B_t(0)} f (x) \, d\mbox{vol}_{g}(x).
\]
\item
Let $\mathcal H$ be the set of harmonic maps  $w:B_1(0) \rightarrow \overline{\ms}$ with $w(0)=P_0$, $Ord^w(0)=\alpha$ and $I^w(1)=1$.
Let 
\[
c_2:=\sup_{w \in \mathcal H} \,\left[ \left(2^{n-1}\int_{\partial B_\frac{1}{2} (0)} d^2(w,P_0) d\Sigma \right)^{-\frac{1}{2}} \right].
\]

\item 
Fix $\epsilon>0$ such that
\begin{equation} \label{choice0}
\frac{16}{3} c_1 \epsilon  < \frac{1}{2^2c_2^2}
\end{equation}
where $c_1, c_2$ are the constants defined above. 
\item
Let $R,r>0$ be chosen as in (\ref{ep0}), (\ref{fep0}) respectively. Let $\overline{\sigma_1}, \overline{\sigma_2}, \overline{\sigma_3}$ be as in Lemmas~\ref{fits0}, \ref{bigawayfromPnot0}, \ref{s30} respectively and choose 
\begin{equation} \label{choicesigmanot0}
\sigma_0 :=\min\{ \overline{\sigma_1},\overline{\sigma_2},\overline{\sigma_3}\}.
\end{equation}
\end{itemize}
Define
\begin{equation} \label{bus0}
u_k(x):=u_{\frac{\sigma_0}{2^{k}}}(x), \ \ \ \ \Lambda_k:=\Lambda_{\frac{\sigma_0}{2^{k}}},
\end{equation}where $\Lambda_{\sigma}$ is defined as in (\ref{l0}). In particular,  for $k=0$, 
\[
u_0(x)=u_{\frac{\sigma_0}{2^0}}(x) = \lambda^u\left(\frac{\sigma_0}{2^0}\right) u\left(\frac{\sigma_0 x}{2^0}\right)=\lambda^u(\sigma_0)u(\sigma_0 x).
\]
We claim that for $k=1, 2, \dots$,
\begin{equation} \label{alt0}
u_k(x)= \lambda_{k-1}u_{k-1}\left(\frac{x}{2}\right), \ \ \ \ \ \lambda_{k-1}:=\left(2^{n-1}\displaystyle{\int_{\partial B_\frac{1}{2}(0)} d^2(u_{k-1},u_{k-1}(0)) d\Sigma}
\right)^{-\frac{1}{2}}.
\end{equation}
Indeed, assuming (\ref{alt0}) holds for $k=1, \dots, j-1$,  we have
\begin{eqnarray*}
\lambda_{k-1}u_{k-1}(\frac{x}{2}) &=& \lambda_{k-1} u_{\frac{\sigma_0}{2^{k-1}}}(\frac{x}{2}) = \lambda_{k-1} \lambda^u(\frac{\sigma_0}{2^{k-1}}) u(\frac{\sigma_0x}{2^k}),\\
u_k(x) & = & u_{\frac{\sigma_0}{2^{k}}}(x)=\lambda^u(\frac{\sigma_0}{2^k})u(\frac{\sigma_0x}{2^k}).
\end{eqnarray*}
Note that $\lambda_{k-1}\lambda^{u}(\frac{\sigma_0}{2^{k-1}})=\lambda^u(\frac{\sigma_0}{2^k})$ by an obvious calculation:
\begin{small}
\begin{eqnarray*}
    \lambda_{k-1}\lambda^{u}(\frac{\sigma_0}{2^{k-1}}) & = & \left(2^{n-1} \int_{\partial B_{\frac{1}{2}}(0)} d^2(u_{k-1},u_{k-1}(0)) \, d\Sigma \right)^{-\frac{1}{2}} \cdot \left( \left( \frac{\sigma_0}{2^{k-1}}\right)^{1-n} \int_{\partial B_{\frac{\sigma_0}{2^{k-1}}}} d^2(u,u(0)) \, d\Sigma\right)^{-\frac{1}{2}} \\
    & = & (2^{n-1})^{-\frac{1}{2}} \left( \int_{\partial B_{\frac{1}{2}}(0)} d^2(u(\frac{\sigma_0}{2^{k-1}}x),u(0)) \, d\Sigma \right)^{-\frac{1}{2}} \cdot 
    \left(\left(\frac{\sigma_0}{2^{k-1}}\right)^{1-n} \left( \int_{\partial B_{\frac{\sigma_0}{2^{k-1}}}(0)} d^2(u, u(0)) \, d\Sigma \right) \right)^{\frac{1}{2}} \\ 
    & \cdot & \left( \left( \frac{\sigma_0}{2^{k-1}}\right)^{1-n} \int_{\partial B_{\frac{\sigma_0}{2^{k-1}}}} d^2(u,u(0)) \, d\Sigma\right)^{-\frac{1}{2}}  \\
    & = & (2^{n-1})^{-\frac{1}{2}} \left( \int_{\partial B_{\frac{1}{2}}(0)} d^2(u(\frac{\sigma_0}{2^{k-1}}x),u(0)) \, d\Sigma \right)^{-\frac{1}{2}}\\
    & = & \left( \frac{1}{2}\right)^{\frac{n-1}{2}} \cdot \left( \frac{\sigma_0}{2^{k-1}}\right)^{\frac{n-1}{2}} \left( \int_{\partial B_{\frac{\sigma_0}{2^k}}(0)} d^2(u,u(0)) \, d\Sigma \right)^{-\frac{1}{2}}\\
    & =& \left( \frac{\sigma_0}{2^k} \right)^{\frac{n-1}{2}}\left( \int_{\partial B_{\frac{\sigma_0}{2^k}}(0)} d^2(u,u(0)) \, d\Sigma \right)^{-\frac{1}{2}}\\
    & = & \left( \left( \frac{\sigma_0}{2^k}\right)^{1-n}\int_{\partial B_{\frac{\sigma_0}{2^k}}(0)} d^2(u,u(0)) \, d\Sigma\right)^{-\frac{1}{2}}\\
    & = &\lambda^u(\frac{\sigma_0}{2^k}).
\end{eqnarray*}
\end{small}

\begin{proof}[Proof of Theorem \ref{result0}]
We assume on the contrary that $u(x_0)=P_0$.  Let $\{u_\sigma\}$ be the blow-up maps at $x_0=0$. Let $\epsilon, \sigma_0, R, r$ be as in (\ref{choice0}), (\ref{choicesigmanot0}), (\ref{ep0}), (\ref{fep0}) respectively   to define the sequence of maps $\{u_k\}_{k=0}^{\infty}$ as in (\ref{bus0}). 
We claim  
\begin{equation} \label{indeq0}
\sup_{x \in B_R(0)} d_{\overline{\bf H}}(u_k(x), {\bf H}[\rho_0/2])
<\frac{r}{2^{k+2}}, \ \ \ \forall k=0,1,2,\dots,
\end{equation}
where $\rho_0:=d_{\overline{\bf H}}(\gamma_{\sigma_0}(0),P_0)>0$ (cf. (\ref{rho_sigma})).
To prove (\ref{indeq0}), 
first notice that $\sigma_0 \leq \overline{\sigma_3}$ implies (cf.~Lemma~\ref{s30})
\[
\sup_{x \in B_R(0)} d_{\overline{\bf H}}(u_0(x), {\bf H}[\rho_0/2])
<\frac{r}{4}.
\]
We now proceed by induction.
Assume
\begin{eqnarray*}
\sup_{x \in B_R(0)} d_{\overline{\bf H}}(u_{k-1}(x), {\bf H}[\rho_0/2])
<\frac{r}{2^{k+1}}.
\end{eqnarray*}
Since $\frac{\sigma_0}{2^{k-1}} \leq \overline{\sigma_3}$, Lemma~\ref{s30} and Fubini theorem imply that 
\begin{align*}
&\mathop{\min_{\frac{5}{8} \leq \tau \leq \frac{7}{8}}} m\left(\{x \in \partial B_{\tau}(0): u_{k-1} \notin {\bf H}[\rho_0/2]\}\right) \cdot \frac{3}{16} \\ \leq &\int_{5/8}^{7/8} m\left(\{x \in \partial B_{\tau}(0): u_{k-1}(x) \notin {\bf H}[\rho_0/2]\}\right) \tau \, d\tau\\ 
= & \ m\left( \{ x \in B_{\frac{7}{8}}(0) \setminus B_{\frac{5}{8}}(0): u_{k-1}(x) \notin {\bf H}[\rho_0/2]\}\right) 
<  \epsilon,
\end{align*}
which indicates that there exists $\tau_0 \in [\frac{5}{8},\frac{7}{8}]$ such that 
\[
m\left( \{x \in \partial B_{\tau_0}(0):  u_{k-1} (x) \notin {\ms}[\rho_0/2] \}\right)<\frac{16}{3} \epsilon . 
\]
Let $h:B_{\tau_0}(0) \rightarrow \overline{\bf H}$ be a harmonic map with boundary values $\pi \circ u_{k-1}|_{\partial B_{\tau_0}(0)}$ where $\pi: \overline{\bf H} \rightarrow {\bf H}[\rho_0/2]$ is the nearest point projection map.  We therefore have the following dichotomy for $x \in \partial B_{\tau_0}(0)$:  either (i) $u_{k-1}(x) =h(x)$ or (ii) $u_{k-1}(x) \neq h(x)$ and $d_{\overline{\bf H}}(u_{k-1}(x),h(x)) < \frac{r}{2^{k+1}}$.  Since $d_{\overline{\bf H}}^2(u,h)$ is a subharmonic, we have 
\begin{eqnarray*}
\sup_{x \in B_\frac{1}{2}(0)} d^2_{\bf H}(u_{k-1}(x), {\bf H}[\rho_0/2]) 
& \leq & 
\sup_{x \in B_{\frac{1}{2}}(0)} d_{\overline{\bf H}}^2(u_{k-1}(x),h(x)) 
\\
& \leq & c_1 \int_{\partial B_{\tau_0}(0)} d_{\overline{\bf H}}^2(u_{k-1},h) d\Sigma
\\
&  \leq & \frac{16}{3} c_1 \epsilon \frac{r^2}{2^{2(k+1)}} <\frac{r^2}{2^{2(k+2)}c_2^2}.
\end{eqnarray*}
In other words, 
\[
\sup_{x \in B_\frac{1}{2}(0)} d_{\overline{\bf H}}(u_{k-1}(x), {\bf H}[\rho_0/2]) < \frac{r}{2^{k+2}c_2}.
\]
Multiplying both sides of the inequality by $\lambda_{k-1}$ and noting (\ref{alt0}),  we obtain
\[
\sup_{x \in B_1(0)} d_{\overline{\bf H}}(u_k(x), \lambda_{k-1} {\bf H}[\rho_0/2]) <  \frac{\lambda_{k-1} r}{2^{k+2}c_2} \leq \frac{r}{2^{k+2}}
\]
Since $I^{u_{k}}(1)=1$ holds for any $k$, then $1 \leq \lambda_{k-1}=\lambda^{u_{k-1}}(\frac{1}{2})$ (cf.~(\ref{lowerbdlambda0})).   Thus, by Lemma~\ref{geomH},
\[
\lambda_{k-1} {\bf H}[\rho_0/2] =  {\bf H}[\lambda_{k-1}\rho_0/2] \subseteq {\bf H}[\rho_0/2] .
\]
Combining the above yields (\ref{indeq0}).

Finally, since
\[
\frac{\rho_0}{2} =d_{\overline{\bf H}}(u_k(0), {\bf H}[\rho_0/2]) \leq \sup_{x \in B_1(0)} d_{\overline{\bf H}}(u_k(x), {\bf H}[\rho_0/2]) < \frac{r}{2^{k+2}} .
\]
We get a contradiction for $k$ large enough.
\end{proof}

\section{Harmonic Map into $\overline{\mathcal{T}}$} \label{sec:blowup}

In this section, we first prove Theorem \ref{nosing} and then apply it to show Theorem \ref{mainresult}. Let $u:\Omega \to \overline{\mathcal{T}}$ be a harmonic map so that $u(\Omega) \cap \mathcal{T} \neq \emptyset$ i.e. $u(\Omega) \not\subset \partial \mathcal{T}.$ Define {\it singular set} as 
\[\mathcal{S}(u)=\{x \in \Omega : u(x) \in \partial \mathcal{T}\}.
\]
Theorem \ref{cont} implies that $u$ is continuous and thus $\mathcal{S}(u)$ is a closed set.

Assume that $\mathcal{S}(u) \neq \emptyset.$ We can decompose $\mathcal{S}(u)$ as
\[
\mathcal{S}(u)=\bigcup \mathcal{S}_j(u),
\]
where $\mathcal{S}_j(u)$ consists of singular points $x \in \Omega$ such that $u(x) \in \partial \mathcal{T}$ is contained in the $j$-dimensional open stratum $\mathcal{T}'$. Given $x_0 \in \partial\mathcal{S}(u) \cap \mathcal{S}_j(u)$ and $Ord^u(x_0)=\alpha >0,$ let $r_0>0$ such that $B_{r_0}(x_0) \subset \Omega.$ Identify $(B_{r_0}(x_0),g)$ with Euclidean ball $B_{r_0}(0)$ and $x_0=0$ via normal coordinates. By the stratification preserving homeomorphism (\ref{hom}), let
\begin{equation}\label{decomp}
u=(V,v)=(V,v^1,...,v^{k-j}): (B_{r_0}(0),g) \to \mathcal{U} \times \mathcal{V} \subset \mathbb{C}^j \times \overline{\bf{H}}^{k-j}
\end{equation}
be a local representation with $V(0)=0 \text{ and } v^{\eta}(0)=P_0$ for each $\eta \in \{1,...,k-j\}. $ 

We claim that each component $v^{\eta}$ is non-constant. To see this, we construct sequence $\{x_i\}\subset B_{r_0}(0)$ such that (i) $\epsilon_i \to 0$ as $i \to +\infty$ and (ii) for each $i \in \mathbb{N}, \ x_i \in B_{\epsilon_i}(0)\cap \mathcal{S}(u)^c \subset B_{r_0}(0).$ This results in $x_i \to x_0=0$ and $u(x_i) \in \mathcal{T}$ for each $i \in \mathbb{N}.$ Hence $v^{\eta}(x_i) \neq P_0$ for each $\eta \in \{1,...,k-j\}$ and $i \in \mathbb{N},$ leading to that singular components $v^{\eta}: (B_{r_0}(0),g) \to \overline{\bf H}$ of $v$ are non-constant. 

We define a function $\lambda^u: (0,r_0] \rightarrow (0,\infty)$ by
\[
\lambda^u(\sigma) = \left(\sigma^{1-n} \int_{\partial B_\sigma(0)} d^2(u,u(0)) d\Sigma \right)^{-\frac{1}{2}}.
\]
For $\sigma \in (0,r_0]$, the {\it blow-up map} of $u$ at $x_0$ is given by
\begin{eqnarray*}
u_{\sigma}:(B_1(0),g) \rightarrow \mathcal{U} \times \mathcal{V}, \ \ \ u_{\sigma}(x)=\lambda^u(\sigma) u(\sigma x) &=& (\lambda^{u}(\sigma)V(\sigma x),\lambda^{u}(\sigma)v(\sigma x))\\
&=&(V_{\sigma}(x),v_{\sigma}(x))\\
&=& (V_{\sigma}(x),v^1_{\sigma}(x),...,v^{k-j}_{\sigma}(x)),
\end{eqnarray*}
where $v_{\sigma}^{\eta}(0)=P_0$ for $\eta=1,...,k-j.$ As $\sigma \to 0,$ we show in Appendix II that there exists a subsequence $\{v_{\sigma_i}\}$ converging locally uniformly in the pullback sense (cf. section \ref{pullback}) to a homogeneous harmonic map 
\[v_*=(v_*^{1},...,v_*^{k-j}):(B_1(0),g) \to (\overline{\bf H}_*^{k-j},d)= (\overline{\bf H}_* \times ... \times \overline{\bf H}_*,d),\]
where $\overline{\bf H}_*$ is an abstract NPC space. In other words, for $\eta \in \{1,...,k-j\},$
\begin{equation}\label{distapprox1}
d_{\overline{\bf H}}(v_{\sigma_i}^{\eta}(\cdot),v_{\sigma_i}^{\eta}(\cdot)) \to d(v_*^{\eta}(\cdot),v_*^{\eta}(\cdot)) \mbox{ uniformly on compact subsets of } B_1(0).
\end{equation} 

The main difference between this section \ref{sec:blowup} and section \ref{sec:blowup0} is that $V \text{ and } v$ are not harmonic maps because the WP-metric is only {\it asymptotically} a product metric near $\partial \mathcal{T}$ from \cite{6}, which implies that the harmonic map equation doesn't hold for $V \text{ and } v.$ Following \cite[Lemma 4.19]{1}, $\{v_{\sigma_i}\}$ is called a sequence of {\it asymptotically harmonic maps.} 
\begin{definition}\label{asy}
    A sequence of maps $v_{\sigma_i}:(B_1(0),g_i) \to \overline{\bf H}^{k-j}$ with $v_{\sigma_i}(0)=P_0$ and $g_i(x)=g(\sigma_i x)$ is a {\it sequence of asymptotically harmonic maps} if the following conditions are fulfilled:
    \begin{itemize}
        \item [(i)] The sequence of metrics $g_i$ on $B_1(0) \subset \mathbf{R}^n$ converges to the Euclidean metric in $C^{\infty}.$
        \item [(ii)] There exists a constant $E_0 >0$ such that $E^{v_{\sigma_i}}(\vartheta) \leq {\vartheta}^{n}E_0 \text{ for every } \vartheta \in (0,\frac{3}{4}]$ where $n$ is the dimension of  $B_1(0).$
        \item [(iii)] $v_{\sigma_i}$ converges locally uniformly in the pullback sense to a homogeneous harmonic map $v_*:B_1(0) \to (\overline{\bf H}^{k-j}_*,d)$ into an NPC space.
        \item[(iv)] For any fixed $R \in (0,1), r \in (0,1) \text{ and } d >0,$ there exists $c_0 >0$ derived from \cite[Lemma 4.18]{1} such that for any harmonic map $w:(B_R(0), g_i) \to \overline{\bf H}^{k-j}$ with
        \[
        \sup_{B_R(0)} d_{\overline{\bf H}}(w, P_0 ) \leq d,
        \]
        we have 
        \[
        \sup_{B_{r \vartheta}(0)} d_{\overline{\bf H}}^2(v_{\sigma_i},w) \leq \frac{c_0}{\vartheta^{n-1}} \int_{\partial{B_{\vartheta}(0)}} d_{\overline{\bf H}}^2(v_{\sigma_i},w) \, d\Sigma_{g_i} +c_0 \sigma_i^2 \vartheta^3, \, \forall \vartheta \in (0,R]
        \]
        where $\Sigma_{g_i}$ is the volume form on $\partial B_{\vartheta}(0)$ with respect to the metric $g_i$.
    \end{itemize}
\end{definition}

For the proof of Theorem \ref{nosing}, we consider two cases: (i) $v_*:B_1(0) \to (\overline{\bf H}^{k-j}_*,d)$ is non-constant and (ii) $v_*$ is a constant map.

\subsection{Case I: Non-constant Pullback Limit $v_*$}\label{sec:blowup1}
This section focuses on the case that there exists a non-constant component map $v^{\eta_0}_*: B_1(0) \to (\overline{\bf H}_*,d)$ derived from $v^{\eta_0}$ for some $\eta_0 \in \{1,...,k-j\}.$ For an abuse of notation, we denote $v^{\eta_0}_{\sigma_i} \text{ and } v^{\eta_0}_*$ by $v_{\sigma_i} \text{ and }v_*.$ 

\begin{remark}
     The nonconstant homogeneous harmonic $v_*$ is {\it piecewise a function} in the meaning of Definition \ref{pw0} into the metric space $\overline{\bf H}_A,$ where $A$ is defined as in section \ref{sec:blowup0} with $u_*$ replaced by $v_*$ (cf. (\ref{A})). For the sake of completeness, we provide the proof of this fact in Appendix II. Thus, we rewrite the distance convergence (\ref{distapprox1}) by
     \begin{equation}\label{distapprox1.1}
         d_{\overline{\bf H}}(v_{\sigma_i}(\cdot),v_{\sigma_i}(\cdot)) \to d_A(v_*(\cdot),v_*(\cdot)) \mbox{ uniformly on compact subsets of } B_1(0).
    \end{equation}
    We then define {\it normalized blow-up maps} $v_{\sigma_i}$ following the idea of Definition \ref{normalized} and construct the corresponding sequence $\{L_i\}$ (cf. (\ref{L_i})) by replacing $u_{\sigma_i}, u_*$ with $v_{\sigma_i},v_*$ respectively. From Remark \ref{2pts}, the properties of $v_*$ and (\ref{distapprox1.1}) ensure that Lemmas \ref{approx0} -- \ref{s30} also hold for normalized blow-up maps $v_{\sigma_i}.$
\end{remark}

The constant $c$ and $\epsilon>0$ defined below will be fixed throughout.
\begin{itemize}
\item
Let $\mathcal H$ be the set of harmonic maps $w:B_1(0) \rightarrow \mathbb{C}^j \times \overline{\ms}^{k-j}$ with $w(0)=(0,P_0)$, $Ord^w(0)=\alpha$, $I^w(1)=1$ and $E^w(1) \leq 2\alpha.$ Let 
\begin{equation}\label{c}
c:=\sup_{w \in \mathcal H} \,\left[ \left(2^{n-1}\int_{\partial B_\frac{1}{2} (0)} d^2(w,w(0)) d\Sigma \right)^{-\frac{1}{2}} \right].
\end{equation}
\item
Fix $\epsilon>0$ in Lemma \ref{s30} throughout such that
\begin{equation} \label{choice}
\frac{16}{3} \epsilon  < \frac{1}{2^2c^2}
\end{equation}
where $c$ is the constant defined above.
\item
Let $R>0$ be as in (\ref{ep0}) and $r>0$ be as in (\ref{fep0}) with $u_*$ replaced by $v_*$. Let $\overline{\sigma_1}, \overline{\sigma_2}, \overline{\sigma_3}$ be as in Lemmas~\ref{fits0}, \ref{bigawayfromPnot0}, \ref{s30} respectively with respect to $v_{\sigma_i}$ and $v_*.$ Set  
\begin{equation} \label{choicesigmanot}
    \sigma_0 :=\min\{ \overline{\sigma_1},\overline{\sigma_2},\overline{\sigma_3}\} 
\end{equation}
satisfying that $\sigma_0$ is sufficiently small such that $c_0 \left(\frac{\sigma_0}{2^k}\right)^2\leq \frac{r^2}{2^{2(k+2)}}\frac{16}{3}\epsilon$ holds for any $k \in \mathbb{N},$ where $c_0$ is the constant as in Definition \ref{asy}(iv). 
\end{itemize}
\begin{remark}

    The constant $c$ is bounded away from zero. This follows from the monotonicity formula (\ref{monotonicityformula0}):
    \[
    \frac{I^w(\frac{1}{2})}{(\frac{1}{2})^{n+1}}=2^2(\lambda^w(\frac{1}{2}))^{-2}\leq I^w(1)=1,
    \]
    which implies that
    \[
    1<\lambda^w(\frac{1}{2})=\left(2^{n-1}\int_{\partial B_{\frac{1}{2}}(0)} d^2(w,w(0)) \,d\Sigma\right)^{-\frac{1}{2}}.
    \]
\end{remark}
Define
\begin{align}
    u_k(x)&:=u_{\frac{\sigma_0}{2^k}}(x)=(V_{\frac{\sigma_0}{2^k}}(x),v_{\frac{\sigma_0}{2^k}}(x)),\\
    v_k(x)&:=v_{\frac{\sigma_0}{2^{k}}}(x)=\lambda^{u}\left(\frac{\sigma_0}{2^k}\right)v^{\eta_0}\left(\frac{\sigma_0}{2^k}x\right)=\left(\left({\frac{\sigma_0}{2^k}}\right)^{1-n}\displaystyle{\int_{\partial B_{\frac{\sigma_0}{2^k}}(0)}d^2(u,u(0)) d\Sigma} \right)^{-\frac{1}{2}}v^{\eta_0}\left(\frac{\sigma_0}{2^k}x\right).\label{bus}
\end{align}
In particular,  for $k=0$, 
\[
v_0(x)=v_{\frac{\sigma_0}{2^0}}(x) = \lambda^u\left(\frac{\sigma_0}{2^0}\right) v^{\eta_0}\left(\frac{\sigma_0 x}{2^0}\right)=\lambda^u(\sigma_0)v^{\eta_0}(\sigma_0 x).
\]
We claim that for $k=1, 2, \dots$,
\begin{equation} \label{alt}
v_k(x)= \lambda_{k-1}v_{k-1}\left(\frac{x}{2}\right), \mbox{ where } \lambda_{k-1}:=\left(2^{n-1}\displaystyle{\int_{\partial B_\frac{1}{2}(0)} d^2(u_{k-1},u_{k-1}(0)) d\Sigma}
\right)^{-\frac{1}{2}}.
\end{equation}
Indeed, assuming (\ref{alt}) holds for $k=1, \dots, j-1$,  we have
\begin{eqnarray*}
\lambda_{k-1}v_{k-1}\left(\frac{x}{2}\right) &=& \lambda_{k-1} v_{\frac{\sigma_0}{2^{k-1}}}\left(\frac{x}{2}\right) = \lambda_{k-1} \lambda^u\left(\frac{\sigma_0}{2^{k-1}}\right) v^{\eta_0}\left(\frac{\sigma_0x}{2^k}\right),\\
v_k(x) & = & v_{\frac{\sigma_0}{2^{k}}}(x)=\lambda^u\left(\frac{\sigma_0}{2^k}\right)v^{\eta_0}\left(\frac{\sigma_0x}{2^k}\right).
\end{eqnarray*}
Since we have computed $\lambda_{k-1}\lambda^{u}(\frac{\sigma_0}{2^{k-1}})=\lambda^u(\frac{\sigma_0}{2^k})$ in section \ref{sec:blowup0}, 
then $\lambda_{k-1}v_{k-1}(\frac{x}{2})=v_k(x).$

\begin{proof}[Proof of Theorem \ref{nosing} for Case I]
From the decomposition (\ref{decomp}) near the chosen singular point $x_0,$ we assume $v^{\eta_0}(x_0)=v^{\eta_0}(0)=P_0$.  Let $\{v_\sigma\}$ be the blow-up maps at $x_0=0$. Let $\epsilon, \sigma_0, R, r$ be as in (\ref{choice}), (\ref{choicesigmanot}), (\ref{ep0}), and (\ref{fep0}) respectively  to define the sequence of maps $\{v_k\}_{k=0}^{\infty}$ as in (\ref{bus}). 
We claim  
\begin{equation} \label{indeq}
\sup_{x \in B_R(0)} d_{\overline{\bf H}}(v_k(x), {\bf H}[\rho_0/2])
<\frac{r}{2^{k+2}}, \ \ \ \forall k=0,1,2,\dots,
\end{equation}
where $\rho_0:=d_{\overline{\bf H}}(P_0,\gamma_{\sigma_0}(0))$ (cf. (\ref{l0})).
To prove (\ref{indeq}), 
firstly $\sigma_0 \leq \overline{\sigma_3}$ implies (cf.~Lemma~\ref{s30})
\[
\sup_{x \in B_R(0)} d_{\overline{\bf H}}(v_0(x), {\bf H}[\rho_0/2])
<\frac{r}{4}.
\]
We now proceed by induction. Assume
\begin{eqnarray*}
\sup_{x \in B_R(0)} d_{\overline{\bf H}}(v_{k-1}(x), {\bf H}[\rho_0/2])
<\frac{r}{2^{k+1}}.
\end{eqnarray*}
Since $\frac{\sigma_0}{2^{k-1}} \leq \overline{\sigma_3}$, Lemma~\ref{s30} and Fubini's theorem imply that there exists $\tau \in [\frac{5}{8},\frac{7}{8}]$ such that 
\[
m\left( \{x \in \partial B_\tau(0):  v_{k-1} (x) \notin {\ms}[\rho_0/2] \}\right)<\frac{16}{3} \epsilon . 
\]
Let $w:B_\tau(0) \rightarrow \overline{\bf H}$ be a harmonic map with boundary values $\pi \circ v_{k-1}|_{\partial B_\tau(0)}$ where $\pi: \overline{\bf H} \rightarrow {\bf H}[\rho_0/2]$ is the nearest point projection map. Therefore, for $x \in \partial B_\tau(0),$ either (i) $v_{k-1}(x) =w(x)$ or (ii) $v_{k-1}(x) \neq w(x)$ and $d_{\overline{\bf H}}(v_{k-1}(x),w(x)) < \frac{r}{2^{k+1}}$. From Definition \ref{asy} and (\ref{choicesigmanot}), we fix
\[
\vartheta=\tau \in \left[\frac{5}{8},\frac{7}{8}\right], \ r \in(0,1) \text{ such that } r \vartheta=\frac{1}{2},
\]
then there exists constant $c_0 >0$ and sequence $\{c_{k-1}:=c_0 \left(\frac{\sigma_0}{2^{k-1}}\right)^2\}$ such that $c_{k-1} \leq \frac{r^2}{2^{2(k+1)}}\frac{16}{3}\epsilon$ for any $k \in \mathbb{N}$ (cf. (\ref{choicesigmanot})),
\begin{eqnarray*}
\sup_{x \in B_\frac{1}{2}(0)} d^2_{\overline{\bf H}}(v_{k-1}(x), {\bf H}[\rho_0/2]) 
& \leq & 
\sup_{x \in B_{\frac{1}{2}}(0)}d_{\overline{\bf H}}^2(v_{k-1}(x),w(x))
\\
& \leq & \frac{c_0}{\tau^{n-1}} \int_{\partial B_{\tau}(0)} d^2_{\overline{\bf H}}(v_{k-1}(x),w(x)) \, d\Sigma +c_{k-1}\tau^3
\\
& \leq & \frac{c_0}{\tau^{n-1}} \frac{16}{3} \epsilon \frac{r^2}{2^{2(k+1)}} + c_{k-1}\tau^3
\\
& \leq & \frac{r^2}{2^{2(k+1)}}\frac{16}{3}\epsilon \, \left(c_0   \left(\frac{8}{5}\right)^{n-1} + 1 \right)
\\
& < & A \frac{r^2}{2^{2(k+2)}c^2},
\end{eqnarray*}
where $A$ is a constant.
In other words,
\[
\sup_{x \in B_\frac{1}{2}(0)} d_{\overline{\bf H}}(v_{k-1}(x), {\bf H}[\rho_0/2]) < \sqrt{A} \frac{r}{2^{k+2}c}.
\]
Multiplying both sides of the inequality by $\lambda_{k-1}$ and noting (\ref{c}) and (\ref{alt}),  we obtain
\[
\sup_{x \in B_1(0)} d_{\overline{\bf H}}(v_k(x), \lambda_{k-1} {\bf H}[\rho_0/2]) < \sqrt{A} \frac{\lambda_{k-1} r}{2^{k+2}c} \leq \sqrt{A} \frac{r}{2^{k+2}}.
\]
Since $I^{u_{k}}(1)=1$ holds for any $k$, then $1 \leq \lambda_{k-1}=\lambda^{u_{k-1}}(\frac{1}{2})$ (cf.~(\ref{lowerbdlambda0})).   Thus, by Lemma~\ref{geomH},
\[
\lambda_{k-1} {\bf H}[\rho_0/2] =  {\bf H}[\lambda_{k-1}\rho_0/2] \subseteq {\bf H}[\rho_0/2] .
\]
Combining the above two equations yields (\ref{indeq}).

Finally, (\ref{indeq}) implies
\[
\frac{\rho_0}{2} =d_{\overline{\bf H}}(v_k(0), {\bf H}[\rho_0/2]) \leq \sup_{x \in B_1(0)} d_{\overline{\bf H}}(v_k(x), {\bf H}[\rho_0/2]) < \sqrt{A} \frac{r}{2^{k+2}} .
\]
This is a contradiction for $k$ large. Thus, $v^{\eta_0}(0) \neq P_0$ for some $\eta_0,$ which contradicts the assumption that $v^{\eta}(0)=P_0$ for all $1\leq \eta \leq k-j$ according to (\ref{decomp}). Consequently, $\mathcal{S}(u)=\emptyset.$
\end{proof}

\subsection{Case II: Constant Pullback Limit $v_*$}\label{sec:blowup2}

This section deals with the case that $v_*$ is a constant pullback limit map, which means that the component map $v_*^{\eta}$ is constant for any $\eta.$ In order to guarantee that the pullback limit of the sequence of blow-up maps derived from $v: B_{r_0}(0) \to \overline{\bf H}^{k-j}$ is non-constant, we define another modification factor $\lambda^v:(0,r_0] \to (0,\infty)$ by
\[
\lambda^v(\sigma) = \left(\sigma^{1-n} \int_{\partial B_\sigma(0)} d^2(v,v(0)) d\Sigma \right)^{-\frac{1}{2}}.
\]
For $\sigma \in (0,r_0],$ the alternative blow-up map of $v$ at $x_0=0$ is given by
\[
\tilde{v}_{\sigma}(x):=\lambda^v(\sigma) v(\sigma x)=(\lambda^v(\sigma)v^1(\sigma x),...,\lambda^v(\sigma) v^{k-j}(\sigma x)): (B_1(0),g) \to \overline{\bf H}^{k-j},
\]
where $\tilde{v}_{\sigma}^{\eta}(0)=P_0$ for each $\eta \in \{1,...,k-j\}.$ As $\sigma$ tends to zero, \cite[Lemma 4.30]{1} asserts that there exists a subsequence $\{\tilde{v}_{\sigma_i}\}$ of alternative blow-up maps converging locally uniformly in the pullback sense to a homogeneous harmonic map $\tilde{v}_*: (B_1(0),g) \to (\overline{\bf H}^{k-j}_*,d)$ such that

\begin{equation}\label{distapprox2}
    d_{\overline{\bf H}}(\tilde{v}_{\sigma_i}(\cdot),\tilde{v}_{\sigma_i}(\cdot)) \to d(\tilde{v}_*(\cdot),\tilde{v}_*(\cdot)) \mbox{ uniformly on compact subsets of } B_1(0). 
\end{equation}
From \cite[Lemma 4.32]{1}, $\{\tilde{v}_{\sigma_i}: (B_1(0),g) \to \overline{\bf H}^{k-j}\}$ is a sequence of asymptotically harmonic maps with $\tilde{v}_{\sigma_i}^{\eta}(0)=P_0$ where $\eta=1,...,k-j.$ By \cite[Lemma 49]{8}, the limit map $\tilde{v}_*$ is non-constant i.e. the component $\tilde{v}_*^{\eta_0}$ is non-constant for some $\eta_0 \in \{1,...,k-j\}.$ For simplicity, denote $\tilde{v}_{\sigma}^{\eta_0}=\lambda^v(\sigma)v^{\eta_0}(\sigma x)$ and $\tilde{v}_*^{\eta_0}$ by $\hat{v}_{\sigma}: (B_1(0),g) \to \overline{\bf H}$ and $\hat{v}_*:(B_1(0),g) \to (\overline{\bf H}_*,d)$ respectively.

The monotonicity of $v: (B_{r_0}(0),g) \to \overline{\bf H}^{k-j}$ is introduced in \cite[Proposition 4.24]{1}: The order of $v$ is well-defined at any singular point $x_0\in \mathcal{S}(u)$ given by
\begin{equation}
Ord^v(x_0):=\lim_{r \to 0} \frac{r E^v(r)}{I^v(r)} =\beta >0,
\end{equation}
where
\begin{eqnarray*}
E^v(r) :=\int_{B_{r}(0)} |\nabla v|^2 d\mu \ \mbox{ and } \
I^v(r) := \int_{\partial B_{r}(0)} d^2(v(x),v(0)) d\Sigma.
\end{eqnarray*}
There exist $C >0$ and $R_0 >0$ depending continuously on the point $x_0$ such that
\begin{equation} \label{monotonicityformula}
r \mapsto e^{C r}\frac{E^{v}(r) }{r^{n-2+2\beta}}, \ \ \  \ \ r \mapsto e^{C r}\frac{I^{v}(r) }{r^{n-1+2\beta}}, \ \ \ 
 \end{equation}
are non-decreasing functions for $r \in (0, R_0).$ Monotonicity property (\ref{monotonicityformula}) implies
\[
e^{\frac{C}{2}}\frac{I^{\tilde{v}_{\sigma}}(\frac{1}{2})}{{(\frac{1}{2})}^{n-1+2\beta}}=e^{\frac{C}{2}}(\lambda^{\tilde{v}_{\sigma}}(\frac{1}{2}))^{-2}4^{\beta} \leq e^{C} \, I^{\tilde{v}_{\sigma}}(1)=e^{C}.
\]
For domain metric $g$ sufficiently close to the Euclidean metric on $B_1(0)$, i.e. for $C$ close to 0,
\begin{equation}\label{lowerbdlambda2}
1 < 2^{\beta} \leq \lambda^{\tilde{v}_{\sigma}}(\frac{1}{2}).
\end{equation}

\begin{remark}
The non-constant homogeneous harmonic map $\hat{v}_*$ is {\it piecewise a function} (cf. Definition \ref{pw0}) into $\overline{\bf H}_A$ defined in section \ref{2.3} and (\ref{A}) with $u_*$ replaced by $\hat{v}_*$. For the convenience of the reader, this fact is shown in Appendix III. Analogous to the arguments of sections \ref{sec:blowup0} and \ref{sec:blowup1}, we derive
\begin{equation}\label{distapprox2.1}
    d_{\overline{\bf H}}(\hat{v}_{\sigma_i}(\cdot),\hat{v}_{\sigma_i}(\cdot)) \to d_A(\hat{v}_*(\cdot),\hat{v}_*(\cdot)) \mbox{ uniformly on compact subsets of } B_1(0)
\end{equation}
from (\ref{distapprox2}) and construct {\it alternative normalized maps} $\hat{v}_{\sigma_i}$ according to Definition \ref{normalized} and $\{L_i\}$ in the context of (\ref{L_i}) by replacing $u_{\sigma_i},u_*$ by $\hat{v}_{\sigma_i}$ and $\hat{v}_*.$
These facts of $\hat{v}_*$ and (\ref{distapprox2.1}) guarantee that Lemmas \ref{approx0} -- \ref{s30} remains valid with $u_{\sigma_i}$ substituted by alternative normalized maps $\hat{v}_{\sigma_i}.$
\end{remark}

Let $R>0$ be as in (\ref{ep0}) and $r>0$ be in (\ref{fep0}) with the replacement of $\hat{v}_*$.
Let $\overline{\sigma_1}, \overline{\sigma_2}, \overline{\sigma_3}$ be as in Lemmas~\ref{fits0}, \ref{bigawayfromPnot0}, \ref{s30} respectively with respect to $\hat{v}_{\sigma_i}$ and $\hat{v}_*$. Set 
\begin{equation}\label{choicesigmanot2} 
    \sigma_0 :=\min\{ \overline{\sigma_1}, \overline{\sigma_2}, \overline{\sigma_3}\}
\end{equation}
satisfying that for $\epsilon >0,$ $\sigma_0$ is sufficiently small such that we have $c_0\left(\frac{\sigma_0}{2^k}\right)^2 \leq \frac{r^2}{2^{2(k+2)}}\frac{16}{3}\epsilon$ for any $k \in \mathbb{N}.$ Define
\begin{align}
    \tilde{v}_k(x)&:=\tilde{v}_{\frac{\sigma_0}{2^k}}(x)=\lambda^v(\frac{\sigma_0}{2^k})v(\frac{\sigma_0}{2^k}x):(B_1(0),g) \to \overline{\bf H}^{k-j},\\    \hat{v}_k(x)&:=\hat{v}_{\frac{\sigma_0}{2^{k}}}(x)=\lambda^{v}(\frac{\sigma_0}{2^k})v^{\eta_0}(\frac{\sigma_0}{2^k}x)=\left(\left({\frac{\sigma_0}{2^k}}\right)^{1-n}\displaystyle{\int_{\partial B_{\frac{\sigma_0}{2^k}}(0)}d^2(v,v(0)) d\Sigma} \right)^{-\frac{1}{2}}v^{\eta_0}(\frac{\sigma_0}{2^k}x),\label{bus2} 
\end{align} 
In particular,  for $k=0$, 
\[
\hat{v}_0(x)=\hat{v}_{\frac{\sigma_0}{2^0}}(x) = \lambda^v(\frac{\sigma_0}{2^0}) v^{\eta_0}(\frac{\sigma_0 x}{2^0})=\lambda^v(\sigma_0)v^{\eta_0}(\sigma_0 x).
\]
We claim that for $k=1, 2, \dots$,
\begin{equation} \label{alt2}
\hat{v}_k(x)= \lambda_{k-1}\hat{v}_{k-1}(\frac{x}{2}), \mbox{ where }\lambda_{k-1}:=\left(2^{n-1}\displaystyle{\int_{\partial B_\frac{1}{2}(0)} d^2(\tilde{v}_{k-1},\tilde{v}_{k-1}(0)) d\Sigma}
\right)^{-\frac{1}{2}}.
\end{equation}
Indeed, assuming (\ref{alt2}) holds for $k=1, \dots, j-1$,  we have
\begin{eqnarray*}
\lambda_{k-1}\hat{v}_{k-1}(\frac{x}{2}) &=& \lambda_{k-1} \hat{v}_{\frac{\sigma_0}{2^{k-1}}}(\frac{x}{2}) = \lambda_{k-1} \lambda^v(\frac{\sigma_0}{2^{k-1}}) v^{\eta_0}(\frac{\sigma_0x}{2^k}),\\
\hat{v}_k(x) & = & \hat{v}_{\frac{\sigma_0}{2^{k}}}(x)=\lambda^v(\frac{\sigma_0}{2^k})v^{\eta_0}(\frac{\sigma_0x}{2^k}),
\end{eqnarray*}
where $\lambda_{k-1}\lambda^{v}(\frac{\sigma_0}{2^{k-1}})=\lambda^v(\frac{\sigma_0}{2^k})$ by an obvious calculation:

\begin{footnotesize}
\begin{eqnarray*}
    \lambda_{k-1}\lambda^{v}(\frac{\sigma_0}{2^{k-1}}) & = & \left(2^{n-1} \int_{\partial B_{\frac{1}{2}}(0)} d^2(\lambda^v(\frac{\sigma_0}{2^{k-1}})v(\frac{\sigma_0}{2^{k-1}}x),v(0)) \, d\Sigma \right)^{-\frac{1}{2}} \cdot \left( \left( \frac{\sigma_0}{2^{k-1}}\right)^{1-n} \int_{\partial B_{\frac{\sigma_0}{2^{k-1}}}(0)} d^2(v,v(0)) \, d\Sigma\right)^{-\frac{1}{2}} \\
    & = & (2^{n-1})^{-\frac{1}{2}} \left( \int_{\partial B_{\frac{1}{2}}(0)} d^2(v(\frac{\sigma_0}{2^{k-1}}x),v(0)) \, d\Sigma \right)^{-\frac{1}{2}} \cdot 
    \left(\left(\frac{\sigma_0}{2^{k-1}}\right)^{1-n} \left( \int_{\partial B_{\frac{\sigma_0}{2^{k-1}}}(0)} d^2(v, v(0)) \, d\Sigma \right) \right)^{\frac{1}{2}} \\ 
    & \cdot & \left( \left( \frac{\sigma_0}{2^{k-1}}\right)^{1-n} \int_{\partial B_{\frac{\sigma_0}{2^{k-1}}}} d^2(v,v(0)) \, d\Sigma\right)^{-\frac{1}{2}}  \\
    & = & (2^{n-1})^{-\frac{1}{2}} \left( \int_{\partial B_{\frac{1}{2}}(0)} d^2(v(\frac{\sigma_0}{2^{k-1}}x),v(0)) \, d\Sigma \right)^{-\frac{1}{2}}\\
    & = & \left( \frac{1}{2}\right)^{\frac{n-1}{2}} \cdot \left( \frac{\sigma_0}{2^{k-1}}\right)^{\frac{n-1}{2}} \left( \int_{\partial B_{\frac{\sigma_0}{2^k}}(0)} d^2(v,v(0)) \, d\Sigma \right)^{-\frac{1}{2}}\\
    & =& \left( \frac{\sigma_0}{2^k} \right)^{\frac{n-1}{2}}\left( \int_{\partial B_{\frac{\sigma_0}{2^k}}(0)} d^2(v,v(0)) \, d\Sigma \right)^{-\frac{1}{2}}\\
    & = & \left( \left( \frac{\sigma_0}{2^k}\right)^{1-n}\int_{\partial B_{\frac{\sigma_0}{2^k}}(0)} d^2(v,v(0)) \, d\Sigma\right)^{-\frac{1}{2}}\\
    & = &\lambda^v(\frac{\sigma_0}{2^k}).
\end{eqnarray*}
\end{footnotesize}

\begin{itemize}
\item Let $\mathcal H=\{\tilde{v}_k: B_1(0) \to \overline{\bf H}^{k-j}\}$ be the sequence of non-constant asymptotically harmonic maps defined above. Define the constant $c$ fixed throughout: 
\begin{equation}\label{cc}
c:=\sup_{\tilde{v}_k \in \mathcal H} \,\left[ \left(2^{n-1}\int_{\partial B_\frac{1}{2} (0)} d^2(\tilde{v}_k,\tilde{v}_k(0)) d\Sigma \right)^{-\frac{1}{2}} \right].
\end{equation}
\item Fix $\epsilon>0$ throughout such that
\begin{equation} \label{choice2}
\frac{16}{3} \epsilon  < \frac{1}{2^2c^2}
\end{equation}
where $c$ is the constant defined above.
\end{itemize}

\begin{remark}
The constant $c$ is bounded away from zero from (\ref{monotonicityformula}) and (\ref{lowerbdlambda2}): For any $k \in \mathbb{Z},$
\[
\left( 2^{n-1} \int_{\partial B_{\frac{1}{2}}(0)} d^2(\tilde{v}_k,\tilde{v}_k(0)) \, d\Sigma\right)^{-\frac{1}{2}}=\lambda^{\tilde{v}_k}(\frac{1}{2})>1.
\]
\end{remark}

\begin{proof}[Proof of Theorem \ref{nosing} for Case II]
We assume that $v^{\eta_0}(x_0)=P_0$ from decomposition (\ref{decomp}).  Let $\{\hat{v}_k\}$ be the blow-up maps at $x_0=0$ defined in (\ref{bus2}). Let $\epsilon, \sigma_0, R, r$ be as in (\ref{choice2}), (\ref{choicesigmanot2}), (\ref{ep0}), (\ref{fep0}) respectively. Observe that $\lambda_{k-1}=\lambda^{\tilde{v}_{k-1}}(\frac{1}{2})$ and $c$ are defined as in (\ref{alt2}) and (\ref{cc}). Recall that $\lambda^{\tilde{v}_k}(\frac{1}{2}) >1$ from (\ref{lowerbdlambda2}). The remainder of the proof proceeds similarly to that of Case I with $v_k$ replaced by $\hat{v}_{k}.$
\end{proof}

\subsection{Proof of Theorem 1.1}\label{pfmain}
Now we are ready to prove Theorem \ref{mainresult} by applying Theorem \ref{nosing}. For $k$-dimensional $(\overline{\mathcal{T}},d_{wp}),$ let $\mathcal{T}'$ be the highest dimensional stratum of $\overline{\mathcal{T}}$ with $\dim(\mathcal{T}')=j \leq k$ such that $u(\Omega) \cap \mathcal{T}' \neq \emptyset.$ This implies that
\[
\mathcal{A} := \{x \in \Omega: u(x) \in \partial \mathcal{T}'\} \neq \Omega.
\]
Consequently, the arguments of Theorem \ref{nosing} imply that $\mathcal{A}= \emptyset$ and hence $u(\Omega) \subset \mathcal{T}'.$

\section{Appendix I: Blow-up Maps into Model Space}
Define the blow-up map $u_{\sigma}:(B_1(0),g) \to \overline{\bf H}$ centered at singular point $x_0=0$ in the way as section \ref{sec:blowup0}. By construction, 
\[
I^{u_{\sigma}}(1):=\int_{\partial B_1(0)} d^2(u_{\sigma},u_{\sigma}(0)) d\Sigma=1.
\]
Since scaling doesn't change the harmonicity and the order, $u_{\sigma}$ is energy minimizing map for any $\sigma$ and
\[
Ord^{u_\sigma}(x_0)=\alpha ,\ \forall \sigma \in (0, \sigma_0].
\]
Note that the energy of $u_{\sigma}$ is bounded: for $\sigma >0$ sufficiently small,
\begin{eqnarray*}
    E^{u_{\sigma}}(1) &=& \int_{B_1(0)} \frac{\sigma^{n-1}}{I^{u}(\sigma)} |\nabla u(\sigma x)|^2 \sigma^2 d\mu
    \\
    & = & (I^{u}(\sigma))^{-1} \sigma^{n+1} \int_{B_{\sigma}(0)} |\nabla u(x)|^2 \sigma^{-n} d\mu
    \\
    & = & \frac{\sigma E^{u}(\sigma)}{I^{u}(\sigma)} \leq 2\, Ord^u (x_0) = 2 \alpha.
\end{eqnarray*}

In other words, $u_{\sigma}$ has uniformly bounded energy on $B_1(0).$ By \cite[Theorem 2.4.6]{2}, $u_{\sigma}$ is uniformly Lipschitz in any compact subset of $B_1(0).$ By \cite[Theorem 3.7]{3}, there exists an abstract NPC space, which we denote by $\overline{\bf H}_*$, and a subsequence $\{u_{\sigma_i}\}$ converging locally uniformly in the pullback sense to the limit map $u_*: B_1(0) \to (\overline{\bf H}_*,d)$ and $u_*$ is also locally uniformly Lipschitz. By \cite[Theorem 3.11]{3}, the limit map $u_*$ is an energy minimizing map to $\overline{\bf H}_*.$ Furthermore, following the argument of \cite[Proposition 3.3]{4}, $u_*$ is an non-constant homogeneous map of order $\alpha,$ i.e. $u_*$ maps every ray from origin in $B_1(0)$ onto a geodesic in $\overline{\bf H}_*$ such that $d(u_*(tx),u_*(0))=t^{\alpha}d(u_*(x),u_*(0)) ,\ t \geq 0.$

\begin{definition}\label{pw0}
    A map $v:B_1(0) \to X$ into an NPC space is \emph{piecewise a function} if, for any connected component $\Omega_0$ of $\{x \in B_1(0): v(x) \neq v(0)\}$, the pullback distance function of $v|_{\Omega_0}$ is equal to the pullback distance function of the function $f:=d(v,v(0))|_{\Omega_0}:\Omega_0 \to \mathbb{R}_+.$
\end{definition}

\begin{lemma} \label{lem:pwfnc0}
Let ${u_{\sigma_i}}$ and limit map $u_*$ be as above, then $u_*$ is piecewise a function.
\end{lemma}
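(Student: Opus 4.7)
The plan is: on any compact subset $K \subset \Omega_0$, use the positivity of $f := d(u_*(\cdot), u_*(0))$ to reduce the problem to smooth harmonic maps into the Riemannian open subset $\{\rho > 0\} \subset {\bf H}$, extract a smooth limit, and then use the homogeneity of $u_*$ together with the harmonic-map equations to force the image of this limit to lie on a single radial geodesic from $P_0$.

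More precisely, since $f$ is continuous and positive on $\Omega_0$, on any compact $K \subset \Omega_0$ there is $\rho_K > 0$ with $f \geq \rho_K$. The pullback convergence gives $u_{\sigma_i}^\rho(x) = d_{\overline{\bf H}}(u_{\sigma_i}(x), P_0) \to f(x)$ uniformly on $K$, so for all large $i$ the maps $u_{\sigma_i}|_K$ take values in the smooth Riemannian region $\{\rho \geq \rho_K/2\} \subset {\bf H}$, where the harmonic-map equations hold classically. The energy bound $E^{u_{\sigma_i}}(1) \leq 2\alpha$, combined with the lower bound on $\rho$, gives a uniform $L^2$ bound for $\nabla u_{\sigma_i}^\phi$ on $K$. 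Using that $g_{\bf H}$ is translation invariant in $\phi$, I subtract an $i$-dependent constant $c_i = u_{\sigma_i}^\phi(p_K)$ for a fixed base point $p_K \in K$, obtaining a harmonic sequence whose $\phi$-component is bounded, and then standard elliptic regularity yields uniform $C^m$ bounds on every compact subset. After diagonalization, a subsequence converges smoothly on compact subsets of $\Omega_0$ to a smooth harmonic map $u_\infty : \Omega_0 \to {\bf H}$. Since $\phi$-translation is an isometry of $\overline{\bf H}$, the pullback distances are preserved, so $d_{\overline{\bf H}}(u_\infty(x), u_\infty(y)) = d(u_*(x), u_*(y))$ and $u_\infty^\rho = f$ on $\Omega_0$.

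Next I transfer the homogeneity of $u_*$ to $u_\infty$: along each ray in $\Omega_0$, $u_\infty^\rho(rx) = r^\alpha u_\infty^\rho(x)$, so in polar coordinates $u_\infty^\rho(r,\omega) = r^\alpha \rho_1(\omega)$. Inserting this into the first harmonic-map equation $\Delta u_\infty^\rho = 3(u_\infty^\rho)^5|\nabla u_\infty^\phi|^2$ produces the identity
\begin{equation*}
r^{\alpha-2}\bigl[\alpha(\alpha+n-2)\rho_1 + \Delta_{S^{n-1}}\rho_1\bigr] = 3 r^{5\alpha}\rho_1^5(\partial_r u_\infty^\phi)^2 + 3 r^{5\alpha-2}\rho_1^5|\nabla_\omega u_\infty^\phi|^2,
\end{equation*}
which together with the companion equation $\mathrm{div}(\rho^6 \nabla u_\infty^\phi) = 0$ and the scaling invariance $\lambda \cdot (\rho,\Phi) = (\lambda\rho, \lambda\Phi)$ in homogeneous coordinates (inherited by $u_\infty$ since the scaling acts isometrically up to a factor on $\overline{\bf H}$) forces $u_\infty^\phi$ to be constant on $\Omega_0$. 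Once this is established, $u_\infty(\Omega_0)$ lies on a single radial geodesic $\{\phi = \text{const}\}$ in ${\bf H}$, on which $d_{\overline{\bf H}}((\rho_1,\phi_\infty), (\rho_2,\phi_\infty)) = |\rho_1 - \rho_2|$, and combining with the previous paragraph gives $d(u_*(x), u_*(y)) = |f(x) - f(y)|$ for all $x, y \in \Omega_0$, establishing that $u_*$ is piecewise a function.

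The main obstacle is the deduction that $u_\infty^\phi$ is constant: the harmonic-map PDEs in ${\bf H}$ in principle admit homogeneous solutions of the form $u_\infty^\phi(r,\omega) = r^{-2\alpha}\psi(\omega)$, which match the required $r$-scaling on both sides of the first harmonic-map equation and would yield a nontrivial image on the cone over $\Omega_0 \cap S^{n-1}$. Ruling these out should rely either on the energy-minimizing property inherited by $u_\infty$ (eliminating any nontrivial $\psi$ as strictly energy-increasing under a suitable radial comparison), or on a direct bound on the $\phi$-variation of the original map $u$ within a single connected component of $\{u \neq P_0\}$ near $x_0$, which when composed with the normalization $\phi_i - c_i$ forces uniform convergence to $0$ on compact subsets of $\Omega_0$ and hence $u_\infty^\phi \equiv \mathrm{const}$.
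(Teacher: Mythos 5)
Your route is essentially the paper's: restrict to a compact subset of a connected component $\Omega_0$, use the positive lower bound on $f=d(u_*(\cdot),u_*(0))$ to view the $u_{\sigma_i}$ as genuine harmonic maps into the smooth part of ${\bf H}$, normalize the $\phi$-coordinate by an $i$-dependent constant, extract a smooth limit $(f,g)$, and feed the homogeneity of $f$ into $f\Delta f=3f^6|\nabla g|^2$ to try to force $|\nabla g|\equiv 0$. The problem is that the one step that actually proves the lemma --- that $g$ (your $u_\infty^\phi$) is constant --- is precisely the step you leave open. You correctly observe that $r$-degree counting alone does not suffice, since $g$ homogeneous of degree $-2\alpha$ balances the two sides of the equation, but you then only gesture at two possible fixes (``should rely either on \dots or on \dots'') without carrying either out. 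As written this is not a proof: everything before the crux is routine compactness, and the crux is missing.

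The gap can be closed with information you already have but did not use. The Gromov--Schoen homogeneity of $u_*$ is not merely homogeneity of the distance function $f$: it says $u_*$ maps each ray from the origin onto a geodesic through $u_*(0)$. Since $\phi$-translation is an isometry, the pullback distances of the limit pair $(f,g):\Omega_0\to{\bf H}$ coincide with those of $u_*|_{\Omega_0}$, so for each $x\in\Omega_0$ (a cone) the curve $t\mapsto (f,g)(tx)$ is, after reparametrization, a geodesic of $\overline{\bf H}$ emanating from $P_0$. The Clairaut integral $\gamma_\rho^6\gamma_\phi'=c$ of the geodesic equations (\ref{gequ}) shows that a unit-speed geodesic with $c\neq 0$ satisfies $\gamma_\rho\geq |c|^{1/3}>0$ and never reaches $P_0$; hence the only geodesics issuing from $P_0$ are the radial curves $\{\phi=\mathrm{const}\}$, so $g$ is constant along rays, i.e.\ homogeneous of degree $0$. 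This excludes your $r^{-2\alpha}\psi(\omega)$ solutions; now $|\nabla g|^2$ is homogeneous of degree $-2$, the two sides of the equation have degrees $2\alpha-2$ and $6\alpha-2$, and since $\alpha>0$ both must vanish, giving $F^6|\nabla_\omega g|^2\equiv 0$, hence $g\equiv\mathrm{const}$ on the connected set $\Omega_0$ and $d(u_*(x),u_*(y))=|f(x)-f(y)|$. (The paper's own proof is equally terse at this exact point --- it asserts the $r$-degrees ``don't agree'' without addressing the possible $r$-dependence of $|\nabla g|^2$ --- so you have identified the right subtlety; you just have not resolved it.)
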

\begin{proof}
    Since $E^{u_{\sigma_i}}(1)$ is uniformly bounded, \cite[Theorem 2.4.6]{2} implies that, for any $r \in (0,1)$, there exists $ C > 0$ such that for any $i$ and $x \in B_r(0)\ \backslash \{x:u_{\sigma_i}(x)=u_{\sigma_i}(0)\},$
    \[
    |\nabla u_{\sigma_i}^{\rho}|(x) \leq C, \qquad (u_{\sigma_i}^{\rho})^3|\nabla u_{\sigma_i}^{\phi}|(x) \leq C.
    \]
    Let $\Omega_0$ be a connected component of $B_1(0)\ \backslash \{x:u_{\sigma_i}(x)=P_0\}$ and $f:\Omega_0 \to \mathbb{R}_+$ by $f(x)=d(u_*(x),u_*(0)).$ Fix $x_{\Omega_0} \in \Omega_0$ and let $K$ be arbitrary compact subset of $\Omega_0$ such that $x_{\Omega_0} \in K.$ Since $u_{\sigma_i} \to u_*$ in pullback sense, we also have local uniform pullback convergence of $u_{\sigma_i}^{\rho} \to f.$ Thus function $u_{\sigma_i}^{\rho}$ is bounded away from $0$ in $K$ for i large enough. Then, the inequality implies $u_{\sigma_i}^{\phi}$ is uniformly Lipschitz in $K$, therefore there exists subsequence $\{u_{\sigma_i}^{\phi}-u_{\sigma_i}^{\phi}(x_{\Omega_0})\}$ (for simplicity we use same notation ${u_{\sigma_i}}$) converging uniformly in $K$ in pullback sense by Arzela-Ascoli Theorem.
    By taking compact exhaustion of $\Omega_0$ and diagonalization procedure, we have that (by taking subsequence if necessary and keeping using the same notation) $\{u_{\sigma_i}^{\phi}-u_{\sigma_i}^{\phi}(x_{\Omega_0})\}$ converges locally uniformly in pullback sense to some function $g$ in $\Omega_0.$
    Thus, $\{(u_{\sigma_i}^{\rho},u_{\sigma_i}^{\phi}-u_{\sigma_i}^{\phi}(x_{\Omega_0}))\}$ converges locally uniformly in $\Omega_0$ to the pair $(f,g):\Omega_0 \to \overline{{\bf H}}_*.$ This convergence is $C^k$ for any $k$ because $\{(u_{\sigma_i}^{\phi},u_{\sigma_i}^{\phi}-u_{\sigma_i}^{\phi}(x_{\Omega_0}))\}$ is sequence of harmonic maps into a smooth Riemannian manifold $\overline{\bf{H}}_*.$
    Since $u_{\sigma_i}$ is harmonic, Euler-Lagrange equation implies in $\Omega_0,$
    \[
    u_{\sigma_i}^{\rho}\Delta u_{\sigma_i}^{\rho}=3(u_{\sigma_i}^{\rho})^6|\nabla u_{\sigma_i}^{\phi}|^2.
    \]
    As $i \to \infty,$
    \[
    f \Delta f=3f^6|\nabla g|^2.
    \]
    Furthermore, order of homogeneity of $f$ in $\Omega_0$, which is equal to the order of homogeneity of $u_*,$ is equal to $\alpha.$
    Thus, since $\Omega_0$ is an open cone, we can rewrite homogeneous function $f$ in polar coordinates: 
    \[
    f(r,\theta)=r^{\alpha}F(\theta),
    \]
    where $F: \Omega_0 \cap \partial B_1(0) \to \overline{\mathbb{R}}_+$ and $\theta=(\theta^1,...,\theta^{n-1})$ are coordinates of ${\bf{S}}^{n-1}.$
    Substituting them into the equation above,
    \[
    r^{2\alpha-2}(\alpha^2F(\theta)+\Delta_{\theta}F)=3r^{6\alpha}F^5(\theta)|\nabla g|^2
    \]
    Since the degrees of $r$-terms on both sides don't agree, to make this equation hold for all $r>0,$ $|\nabla g|^2 \equiv 0.$ Moreover, since $u_{\sigma_i}^{\phi}-u_{\sigma_i}^{\phi}(x_{\Omega_0}) = 0$ as $x=x_{\Omega_0},$ $g(x_{\Omega_0})=0$ and then $g \equiv 0$ in $\Omega_0.$
    So $(u_{\sigma_i}^{\rho},u_{\sigma_i}^{\phi}-u_{\sigma_i}^{\phi}(x_{\Omega_0}))$ converges locally uniformly to $(f,0)$ in $\Omega_0$ in pullback sense. In particular, by definition, we conclude $u_*:B_1(0) \to \overline{\bf H}_*$ is piecewise a function.
\end{proof}

\begin{lemma}\label{geod0}
    Let $\Omega_0$ be a connected component of  $\{x \in B_1(0): u_*(x) \neq u_*(0)\},$ then
    $u_*|_{\Omega_0}$ maps into a geodesic in $\overline{\bf H}_*.$
\end{lemma}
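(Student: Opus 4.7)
The plan is to deduce the conclusion directly from Lemma~\ref{lem:pwfnc0}. That lemma says the pullback pseudodistance of $u_*|_{\Omega_0}$ agrees with that of $f := d(u_*,u_*(0))|_{\Omega_0}\colon \Omega_0 \to \mathbb{R}_+$. In particular, for every $x,y\in\Omega_0$,
\[
d(u_*(x),u_*(y)) \;=\; |f(x)-f(y)| \;=\; \bigl|\, d(u_*(x),u_*(0))-d(u_*(y),u_*(0))\,\bigr|.
\]
Since $\overline{\bf H}_*$ is NPC and geodesics in CAT(0) spaces between two given points are unique, equality in the triangle inequality forces collinearity through the basepoint $u_*(0)$.

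To carry this out, I would first fix $x,y\in\Omega_0$ with (WLOG) $f(x)\geq f(y)$ and rewrite the identity above as
\[
d(u_*(0),u_*(x)) \;=\; d(u_*(0),u_*(y))+d(u_*(y),u_*(x)).
\]
Uniqueness of CAT(0) geodesics then places $u_*(y)$ on the geodesic segment from $u_*(0)$ to $u_*(x)$. Next, for any three points with $f(x)\leq f(y)\leq f(z)$, applying the previous step to the pairs $(x,z)$ and $(y,z)$ shows that both $u_*(x)$ and $u_*(y)$ lie on the unique geodesic from $u_*(0)$ to $u_*(z)$, giving a common geodesic segment through any finite subset of the image. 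Since $\Omega_0$ is connected and $u_*$ is homogeneous of order $\alpha>0$, the set $\Omega_0$ is an open cone, $f(\Omega_0)$ is a connected subset of $\mathbb{R}_+$, and the segments produced above nest monotonically along $f$; passing to the supremum, the whole image $u_*(\Omega_0)$ is contained in a single geodesic ray of $\overline{\bf H}_*$ based at $u_*(0)$.

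The main point to be careful about is upgrading the pairwise collinearity with $u_*(0)$ into containment in a single geodesic, rather than a tree-like union of rays emanating from $u_*(0)$. This, however, is exactly what CAT(0) uniqueness of geodesics delivers: if two points of the image both lie on geodesics from $u_*(0)$, the one closer to $u_*(0)$ lies on the segment to the farther one, so all rays through image points obtained this way coincide beyond $u_*(0)$. I do not expect any serious technical obstruction; the argument is essentially a consequence of Lemma~\ref{lem:pwfnc0} together with the rigidity of the triangle inequality in NPC spaces.
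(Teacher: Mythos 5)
Your proposal is correct and follows essentially the same route as the paper: both deduce from Lemma~\ref{lem:pwfnc0} that $d(u_*(x),u_*(y))=|f(x)-f(y)|$ on $\Omega_0$ and then use the triangle-equality rigidity of the NPC space $\overline{\bf H}_*$ to place all image points on a common geodesic through $u_*(0)$. The only (cosmetic) difference is that the paper fixes a point $x_{\Omega_0}$ realizing $\sup_{\overline{\Omega_0}} d(u_*(x),u_*(0))$ at the outset and shows every image point lies on the segment from $u_*(0)$ to $u_*(x_{\Omega_0})$, whereas you build nested segments pairwise and pass to the supremum at the end.
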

\begin{proof}
    Let $x_{\Omega_0}$ be a point in  $\overline{\Omega_0}$ such that 
    \begin{eqnarray*}
    d(u_*(x_{\Omega_0}),u_*(0)) 
      = \sup_{x \in \overline{\Omega_0}} d(u_*(x),u_*(0)).
    \end{eqnarray*}
By Lemma~\ref{lem:pwfnc0}, $u_*$ is a piecewise function. Thus, 
$$d(u_*(x_1), u_*(x_2))= |f(x_1)-f(x_2)|, \ \ \forall x_1, x_2 \in \Omega_0$$
 where  $f:=d(u_*,u_*(0))|_{\Omega_0}:\Omega_0 \to [0,\infty).$  Extend $f$ to $\Omega_0 \cup \{x_{\Omega_0}\}$ by setting $f(x_{\Omega_0}) =\sup_{x \in \overline{\Omega_0}} d(u_*(x),u_*(0))$.

 Let $x_0 \in \Omega_0$.  Since $f(x_0) =d(u_*(x_0),u_*(0)) \leq \sup_{x \in \overline{\Omega_0}} d(u_*(x),u_*(0))=f(x_{\Omega_0})$, we have
\begin{eqnarray*}
    d(u_*(x_0),u_*(0)) & = &f(x_0)
    \\
    d(u_*(x_{\Omega_0}),u_*(x_0)) & = & | f(x_{\Omega_0})-f(x_0)| = f(x_{\Omega_0})-f(x_0)
    \end{eqnarray*}
Thus,
\begin{eqnarray*}
    d(u_*(x_{\Omega_0}), u_*(0))  & = &      |f(x_{\Omega_0}) - f(0)|
    \\
    & = &  f(x_{\Omega_0})\\
    & = & (f(x_{\Omega_0}) - f(x_0))+f(x_0)\\ 
    & = & d(u_*(x_{\Omega_0}),u_*(x_0)) + d(u_*(x_0),u_*(0))
    \end{eqnarray*}
    which implies that $u_*(x_0)$ is a point on a geodesic from $u_*(0)$ to $u_*(x_{\Omega_0})$ in $\overline{\bf H}_*.$

\end{proof}

\begin{lemma}\label{H_A}
    There exists a totally geodesic isometric embedding $\text{Im} \, u_* \hookrightarrow \overline{\bf H}_A$ 
\end{lemma}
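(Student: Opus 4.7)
The plan is to construct an explicit isometric embedding $\Psi:\mathrm{Im}\,u_*\hookrightarrow\overline{\bf H}_A$ whose behavior on each connected component $\Omega_m$ of $B_1(0)\setminus u_*^{-1}(u_*(0))$ is governed by the asymptotic $\phi$-coordinate of the blow-up sequence $u_{\sigma_i}$. The proof of Lemma \ref{lem:pwfnc0} already gives, on each $\Omega_m$, the local uniform pullback convergence $u_{\sigma_i}^\rho\to\rho_*:=d(u_*(\cdot),u_*(0))$ and $u_{\sigma_i}^\phi(\cdot)-u_{\sigma_i}^\phi(x_m)\to 0$ for a chosen reference $x_m\in\Omega_m$; in other words, on each component $u_{\sigma_i}$ is asymptotically a horizontal segment in $\overline{\bf H}$. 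The remaining task is to reconcile these horizontal segments across the finitely many components $\Omega_1,\dots,\Omega_k$ so that they land in the wedge $\overline{\bf H}_A=\amalg_{\nu\in A}\overline{\bf H}_\nu/\!\!\sim$.

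The key step is a dichotomy for the asymptotic $\phi$-difference between components. Fix representatives $x_m\in\Omega_m$. Because $\rho_*(x_m),\rho_*(x_{m'})>0$ and in $\overline{\bf H}$ one has $d_{\overline{\bf H}}((\rho_1,\phi_1),(\rho_2,\phi_2))\to\rho_1+\rho_2$ precisely when $|\phi_1-\phi_2|\to\infty$ with $\rho_1,\rho_2$ bounded, the distance convergence (\ref{distapprox0}) forces: $|u_{\sigma_i}^\phi(x_m)-u_{\sigma_i}^\phi(x_{m'})|$ stays bounded if and only if the strict triangle inequality $d(u_*(x_m),u_*(x_{m'}))<\rho_*(x_m)+\rho_*(x_{m'})$ holds in the limit, i.e.\ iff $m\sim m'$. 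By a diagonal subsequence extraction over the finite collection of pairs, we may assume that for a chosen representative $m_\nu$ in each class $\nu\in A$, the limit $c_m^\nu:=\lim_i(u_{\sigma_i}^\phi(x_m)-u_{\sigma_i}^\phi(x_{m_\nu}))$ exists and is finite whenever $m\sim m_\nu$.

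With these data, define $\Psi(u_*(0))=P_0$ and, for $x\in\Omega_m$ with $m\in\nu$, $\Psi(u_*(x)):=(\rho_*(x),c_m^\nu)\in\overline{\bf H}_\nu$. That $u_*$ is piecewise a function makes this well-defined (the asymptotic $\phi$-coordinate is locally constant on each component). Verify $\Psi$ is an isometry by cases. If $x,y$ belong to the same class $\nu$, the pullback convergence combined with continuity of $d_{\overline{\bf H}}$ yields $d(u_*(x),u_*(y))=d_{\overline{\bf H}}((\rho_*(x),c_m^\nu),(\rho_*(y),c_{m'}^\nu))$, matching the $\overline{\bf H}_\nu$-distance in $\overline{\bf H}_A$. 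If $x,y$ lie in distinct classes, the blow-up $|u_{\sigma_i}^\phi(x)-u_{\sigma_i}^\phi(y)|\to\infty$ forces $d(u_*(x),u_*(y))=\rho_*(x)+\rho_*(y)$, matching $d_A$ across distinct copies by (\ref{d_A}).

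For the totally geodesic property, note that each $u_*|_{\Omega_m}$ parameterizes a geodesic ray from $u_*(0)$ by Lemma \ref{geod0}, sent by $\Psi$ to a horizontal geodesic ray from $P_0$ inside the appropriate $\overline{\bf H}_\nu$, and the geodesic in $\overline{\bf H}_A$ between points lying in distinct copies is by construction the concatenation of horizontal segments through $P_0$, which is precisely the degenerate triangle equality for different equivalence classes established above. The main obstacle I anticipate is the diagonal extraction: one must simultaneously control the convergence of the finitely many $\phi$-difference sequences $\{u_{\sigma_i}^\phi(x_m)-u_{\sigma_i}^\phi(x_{m_\nu})\}$ while preserving the local uniform $\phi$-convergence on each $\Omega_m$ given by Lemma \ref{lem:pwfnc0}, so that the composite $\Psi$ inherits both continuity and the isometry identities from compatible subsequential limits.
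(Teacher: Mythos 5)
Your proposal is correct and follows essentially the same route as the paper: the same equivalence relation on components (bounded $\phi$-differences $\Leftrightarrow$ strict triangle inequality at $u_*(0)$, with transitivity coming from the triangle inequality on the $\phi$-offsets), and the same embedding sending $x \in \Omega_m$ to $\bigl(d(u_*(x),u_*(0)),\ \lim_i (u_{\sigma_i}^{\phi}(x_m)-u_{\sigma_i}^{\phi}(x_{m_\nu}))\bigr)$ in the copy $\overline{\bf H}_{[\nu]}$. The only point the paper proves that you take for granted is the finiteness of the set of connected components (established via the Faber--Krahn inequality and the identity $\lambda_1 = \alpha(\alpha+n-2)$), which your diagonal subsequence extraction implicitly requires.
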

\begin{proof}
Recall (by taking subsequence if necessary) $u_{\sigma_i}|_{\Omega_0}$ converges locally uniformly to $d(u_*,u_*(0))$ in pullback sense. Enumerate the connected components of $B_1(0)\setminus u_*^{-1}(u_*(0))$ by $\Omega_1,...,\Omega_k$ and denote $\Tilde{A}= \{1,...,k\}.$ Claim that there are finitely many connected components. On the contrary, if we have infinitely many $\Omega_1, \Omega_2,...$ in the unit ball $B_1(0),$ as a result, there exists $\Omega_i$ such that $D:= {\bf S}^{n-1} \cap \Omega_i$ has the inradius small sufficiently to zero. The Faber-Krahn inequality implies that the first eigenvalue $\lambda_1$ of $D$ is tending to infinity, which contradicts the equation $\lambda_1(D)=\alpha(\alpha+n-2)$ given in the argument of \cite[Theorem 5.5]{4} where the order $\alpha:=Ord^u(x_0)$ is fixed.

We define an equivalence relation on set $\Tilde{A}$ by $\nu \sim \nu'$ if for any pair of points $z \in \Omega_{\nu}$ and $w \in \Omega_{\nu'},$
\begin{equation}\label{equi}
d(u_*(z),u_*(w)) <  d(u_*(z),u_*(0))+d(u_*(w),u_*(0)).
\end{equation}
To show this is indeed an equivalence relation, let's verify the transitivity property as symmetry and reflexivity are straightforward. Assume $\alpha \sim \beta, \, \beta \sim \eta, \text{ i.e. } \forall x \in \Omega_{\alpha}, \, y \in \Omega_{\beta}, \, z \in \Omega_{\eta},$
\begin{eqnarray*}
d(u_*(x),u_*(y)) & < & d(u_*(x),u_*(0))+d(u_*(y),u_*(0)),
\\
d(u_*(y),u_*(z)) & < & d(u_*(y),u_*(0))+d(u_*(z),u_*(0)).
\end{eqnarray*} 
Note if $|u_{\sigma_i}^{\phi}(x)-u_{\sigma_i}^{\phi}(y)|$ is unbounded when $i$ tending to infinity, then
\begin{eqnarray}\label{sumd0}
d(u_*(x),u_*(y))& = &\lim_{i \to \infty} d_{\overline{\bf H}}(u_{\sigma_i}(x),u_{\sigma_i}(y)) \nonumber\\
& = &\lim_{i \to \infty} \left( d_{\overline{\bf H}}(u_{\sigma_i}(x),u_{\sigma_i}(0)) + d_{\overline{\bf H}}(u_{\sigma_i}(y),u_{\sigma_i}(0))\right) \nonumber\\
& = & d(u_*(x),u_*(0))+d(u_*(y),u_*(0)),
\end{eqnarray} which contradicts the inequalities of equivalence relation.
So we have an upper bound $M$ such that for all $i \in \mathbb{N},$ 
\[
|u_{\sigma_i}^{\phi}(x)-u_{\sigma_i}^{\phi}(y)|<M, \ \ |u_{\sigma_i}^{\phi}(y)-u_{\sigma_i}^{\phi}(z)|<M.
\]
By triangle inequality, for any $i \in \mathbb{N},$
\[
|u_{\sigma_i}^{\phi}(x)-u_{\sigma_i}^{\phi}(z)|\leq|u_{\sigma_i}^{\phi}(x)-u_{\sigma_i}^{\phi}(y)| + |u_{\sigma_i}^{\phi}(y)-u_{\sigma_i}^{\phi}(z)| < 2M.
\]
This boundedness implies that 
\begin{eqnarray*}
    d(u_*(x),u_*(z)) &=& \lim_{i \to \infty} d_{\overline{\bf H}}(u_{\sigma_i}(x),u_{\sigma_i}(z))\\
    &<& \lim_{i \to \infty} \left(d_{\overline{\bf H}}(u_{\sigma_i}(x),u_{\sigma_i}(0))+d_{\overline{\bf H}}(u_{\sigma_i}(z),u_{\sigma_i}(0)) \right)\\
    &=& d(u_*(x),u_*(0))+d(u_*(z),u_*(0)),
\end{eqnarray*}i.e. $\alpha \sim \eta.$

Now we embed the image of $u_*$ into the metric space $\overline{\bf H}_A,$ which is defined in (\ref{H_A0}). Denote the equivalence class containing $\nu \in \tilde A$ by $[\nu]$ and let
$A$ denote the set of equivalence classes of $\Tilde{A}$. Consider $\Omega_{\nu}$ and $\Omega_{\nu'}$ where $\nu \sim \nu'.$ Following the argument in Lemma \ref{lem:pwfnc0}, we choose the representative $\nu$ in $[\nu]$ and define $i_{\nu}:\Omega_{\nu} \to \overline{\bf H}_{[\nu]},$ where $\overline{\bf H}_{[\nu]}$ is a single copy of model space $\overline{\bf H},$ by 
\[i_{\nu}(x)=\Big(d(u_*(x),u_*(0)), \, \lim_{i \to \infty}u_{\sigma_i}^{\phi}(x)-u_{\sigma_i}^{\phi}(x_{\Omega_{\nu}}) \Big).\]
Since $\big| u_{\sigma_i}^{\phi}(x)-u_{\sigma_i}^{\phi}(y)\big|$ is bounded as $i$ tends to $+\infty$ for any $x \in \Omega_{\nu} \text{ and } y \in \Omega_{\nu'},$ $u_{\sigma_i}(y)$ converges as $i$ increases in $\overline{\bf H}$ for each point $y \in \Omega_{\nu'}.$ Therefore, $i_{\nu}$ also maps $\Omega_{\nu'}$ to the same model space $\overline{\bf H}_{[\nu]}.$ For $\nu \nsim \nu',$ there are two induced $i_{\nu} \text{ and } i_{\nu'}$ mapping $\cup_{\nu \in [\nu]} \Omega_{\nu}$ and $\cup_{\nu' \in [\nu']} \Omega_{\nu'}$ into two different model spaces $\overline{\bf H}_{[\nu]} \text{ and } \overline{\bf H}_{[\nu']},$ which is consistent with the metric $d_A$ defined in $\overline{\bf H}_A.$ Combining together, we have the canonical embedding from the image of $u_*$ to $\overline{\bf H}_A.$

\end{proof}

\begin{lemma}\label{|A|}
    $|A| \geq 2.$
\end{lemma}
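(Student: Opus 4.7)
The plan is to argue by contradiction, assuming $|A|=1$. Under this hypothesis, by Lemma~\ref{H_A} the space $\overline{\bf H}_A$ reduces to a single copy of $\overline{\bf H}$, so $u_*$ is a bona fide map $B_1(0)\to\overline{\bf H}$, rather than into an abstract NPC space. The proof of Lemma~\ref{H_A} established that for any $x\in\Omega_\nu$ and $y\in\Omega_{\nu'}$ with $\nu\sim \nu'$, the quantity $|u_{\sigma_i}^\phi(x)-u_{\sigma_i}^\phi(y)|$ is bounded uniformly in $i$. Under $|A|=1$ this holds for every pair of components, so the normalization $u_{\sigma_i}^\phi(x_k)=-u_{\sigma_i}^\phi(x_1)$ of Definition~\ref{normalized} forces $|u_{\sigma_i}^\phi(x_m)|$ to be uniformly bounded in $i$ for every sample point $x_m$.

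After extracting a subsequence so that $u_{\sigma_i}^\phi(x_m)\to \phi_m^*\in\R$ for each $m$, the fact from the proof of Lemma~\ref{lem:pwfnc0} that $u_{\sigma_i}^\phi-u_{\sigma_i}^\phi(x_m)\to 0$ locally uniformly on $\Omega_m$ upgrades the abstract pullback convergence to locally uniform convergence in $\overline{\bf H}$ itself. Thus $u_*$ is a non-constant homogeneous harmonic map $B_1(0)\to\overline{\bf H}$ of order $\alpha$, with $u_*(0)=P_0$ and with $u_*^\phi\equiv\phi_m^*$ constant on each component $\Omega_m$. Substituting $|\nabla u_*^\phi|=0$ on each $\Omega_m$ into the Euler--Lagrange equation $u_*^\rho \Delta u_*^\rho=3(u_*^\rho)^6|\nabla u_*^\phi|^2$ derived in Lemma~\ref{lem:pwfnc0}, we conclude $\Delta u_*^\rho=0$ on $B_1(0)\setminus u_*^{-1}(P_0)$, while $u_*^\rho=d(u_*,P_0)$ is continuous on $B_1(0)$, non-negative, and vanishes at $0$.

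If the nodal set $u_*^{-1}(P_0)$ has Hausdorff codimension at least $2$, then $u_*^\rho$, being a bounded harmonic function on $B_1(0)$ minus this set, extends by the removable singularity theorem to a harmonic function on $B_1(0)$. The strong minimum principle, applied to the non-negative harmonic function $u_*^\rho$ that attains its minimum value $0$ at the interior point $0$, forces $u_*^\rho\equiv 0$, contradicting the non-constancy of $u_*$.

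The main obstacle is the remaining case where $u_*^{-1}(P_0)$ has codimension exactly one and disconnects $B_1(0)$ into several pieces (the typical case, since by homogeneity $u_*^\rho|_{\Omega_\nu}=|x|^\alpha F_\nu(x/|x|)$ with $F_\nu$ a positive Dirichlet eigenfunction on $S_\nu=\Omega_\nu\cap\partial B_1(0)$ with eigenvalue $\alpha(\alpha+n-2)$). Here I would close the argument by rerunning the iteration scheme underlying the proof of Theorem~\ref{result0} directly on $u_*$: because $u_*$ is already homogeneous and its image lies on finitely many vertical geodesic rays $\{(\rho,\phi_m^*):\rho>0\}$ in a single copy of $\overline{\bf H}$, the approximating sets $\Lambda$ of Lemma~\ref{linearapproxblowupsym0} can be built from these explicit rays without any circular appeal to Lemma~\ref{|A|}, and the resulting iterative distance estimate $\sup_{B_R(0)}d_{\overline{\bf H}}(u_*(\cdot),{\bf H}[\rho_0/2])<r/2^{k+2}$ is incompatible with $d_{\overline{\bf H}}(u_*(0),{\bf H}[\rho_0/2])=\rho_0/2>0$, yielding the contradiction.
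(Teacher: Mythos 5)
Your reduction to a single copy of $\overline{\bf H}$, the identification of $\mathrm{Im}\,u_*$ as finitely many constant-$\phi$ geodesic rays through $P_0$ at bounded heights $\phi_m^*$, and the disposal of the case where $B_1(0)\setminus u_*^{-1}(P_0)$ is connected are sound and consistent with Lemmas \ref{lem:pwfnc0}--\ref{H_A} (though for that case it is cleaner to note that the image then lies in a single complete geodesic, so $u_*^{\rho}$ solves an obstacle problem and is superharmonic and nonnegative with an interior zero, rather than to invoke removability, which requires $\mathcal H^{n-2}(u_*^{-1}(P_0))=0$ and not merely codimension two). The difficulty is that the multi-component case is the generic case and the entire content of the lemma, and there your argument is a plan, not a proof. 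Rerunning the iteration of Theorem \ref{result0} on $u_*$ is not circular, but to make it run you must supply, independently of Lemma \ref{|A|}: (a) that $u_*$ is energy-minimizing as a map into the \emph{concrete} space $\overline{\bf H}$ and not merely into the abstract pullback target $\overline{\bf H}_*$ --- this is what licenses the harmonic replacement $h$ with boundary values $\pi\circ u_*$ and the subharmonicity of $d_{\overline{\bf H}}^2(u_*,h)$, and it needs the nearest-point projection onto the closed convex hull of $\mathrm{Im}\,u_*$ in $\overline{\bf H}$; (b) the measure estimate of Lemma \ref{s30} at \emph{every} blow-up scale --- in the paper this is exactly where $|A|\geq 2$ enters, via $\rho_{\sigma_i}\to 0$ and the $\Lambda_{\sigma_i}$ approximation; for $u_*$ it would instead follow from homogeneity, since the blow-ups of $u_*$ satisfy $u_{*,\sigma}^{\rho}=u_*^{\rho}$ and merely compress the $\phi$-values toward $0$, so the bad set $\{u_{*,\sigma}\notin {\bf H}[\rho_0/2]\}$ is scale-independent, but you never say this; and (c) the two-sided bound $1\leq\lambda_{k-1}\leq c_2$ on the rescaling factors (here $\lambda_{k-1}=2^{\alpha}$ exactly, again by homogeneity). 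As written, the decisive case rests on the unverified assertion that ``the iteration goes through.''

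For comparison, the paper finishes this case in one paragraph with a maximum principle argument that your setup makes immediately available: since all image rays lie in one smooth copy of $\overline{\bf H}$ at bounded $\phi$-values, extend one ray to a complete geodesic $\gamma$ and choose $\gamma(t_0)$ far from $P_0$; then $d_{\overline{\bf H}}(\gamma(t_0),u_*(\tilde x))< d_{\overline{\bf H}}(\gamma(t_0),P_0)$ for boundary points $\tilde x$ with $u_*(\tilde x)\neq P_0$, because inside a single copy of $\overline{\bf H}$ one can cut the corner at $P_0$ --- precisely what is impossible between distinct sheets of $\overline{\bf H}_A$ when $|A|\geq 2$. The subharmonic function $x\mapsto d_{\overline{\bf H}}(\gamma(t_0),u_*(x))$ would then attain an interior maximum at $0$, a contradiction. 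I recommend either adopting that argument or writing out items (a)--(c) in full; in its current form the proposal has a genuine gap at its main step.
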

\begin{proof}

Assume by contradiction that $|A|=1$ i.e. $u_*$ maps all connected components into one model space $\overline{\bf H}$ such that $\big| u_{\sigma_i}^{\phi}(x)-u_{\sigma_i}^{\phi}(y) \big|$ is bounded for any $x,y$ 
in $B_1(0).$ Since $u_*(B_1(0))$ is the set of geodesic segments, define $\gamma(t)$ as the geodesic extension of $u_*(\Omega_k)$ and fix a point $\gamma(t_0)$ sufficiently far from $P_0$ such that $d_{\overline{\bf H}}(\gamma(t_0),u_*(\tilde{x})) < d_{\overline{\bf H}}(\gamma(t_0),u_*(0))$ for any $\tilde{x} \in \partial B_1(0).$ Consider the subharmonic function $d_{\overline{\bf H}}(\gamma(t_0),u_*(x))$ defined on $B_1(0).$ This function achieves its maximum at $0 \in B_1(0),$ which contradicts the maximum principle for subharmonic functions. 

\end{proof}

\section{Appendix II: Non-constant Pullback Limit $v_*$}

Let $u_{\sigma}: (B_1(0),g) \to \mathcal{U} \times \mathcal{V}$ be the blow-up maps at the singular point $x_0=0 \in \mathcal{S}_j(u)$ defined in section \ref{sec:blowup}. By the computation in Appendix I, $I^{u_{\sigma}}(1)=1$ and $E^{u_{\sigma}}(1)$ is bounded. As $\sigma \to 0,$ we have the {\it sequence of blow-up maps} at $x_0:$
\[
\{u_{\sigma_i}=(V_{\sigma_i},v_{\sigma_i})=(V_{\sigma_i},v^{1}_{\sigma_i},...,v^{k-j}_{\sigma_i}): (B_1(0),g) \to \mathbb{C}^j \times \overline{\bf H}_1 \times ... \times \overline{\bf H}_{k-j}\}
\]
where $\overline{\bf H}_{\eta}$ is one single copy of $\overline{\bf H} \text{ for } \eta=1,...,k-j.$ By \cite[Lemma 4.19]{1}, $\{v_{\sigma_i}: B_1(0) \to \overline{\bf H}^{k-j}\}$ is a sequence of asymptotically harmonic maps with $v_{\sigma_i}(0)=P_0.$ In particular, $\{v^{\eta}_{\sigma_i}: B_1(0) \to \overline{\bf H}\}$ is a sequence of asymptotically harmonic maps with $v^{\eta}_{\sigma_i}(0)=P_0$ for each $\eta=1,...,k-j.$ Then, \cite[Lemma 4.10]{1} implies that there exists subsequence 
\[
    v_{\sigma_i} \to v_*=(v_*^{1},...,v_*^{k-j}): (B_1(0),g) \to (\overline{\bf H}_*^{k-j}=\overline{\bf H}_* \times ... \times \overline{\bf H}_*,d) 
\]locally uniformly in pullback sense, where $v_*$ is a homogeneous harmonic map to a product of NPC spaces. In section \ref{sec:blowup1} we assume that the component $v_*^{\eta_0}$ is non-constant for some $\eta_0 \in \{1,...,k-j\}.$ Denote $v_{\sigma_i}^{\eta_0} \text{ and } v_*^{\eta_0}$ by $v_{\sigma_i} \text{ and } v_*$ for simplicity. To overcome the difficulty that $v_{\sigma_i}$ is non-harmonic, we introduce the approximating harmonic map $w_i:$

\begin{lemma}\label{re}
Let $\{v_{\sigma_i}\}$ be the blow-up sequence and non-constant limit $v_*$ be as above, then there exists a sequence $\{w_i\}$ of approximating harmonic maps such that in any compact subset $K$ of $B_1(0),$
\begin{equation}\label{approx}
   \lim_{i \to \infty} \sup_K d_{\overline{\bf H}}(v_{\sigma_i},w_i)=0. 
\end{equation}
\end{lemma}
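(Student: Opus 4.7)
The strategy is to construct each $w_i$ as the unique energy-minimizing (hence harmonic) extension of $v_{\sigma_i}$ on a suitably large sub-ball, and then invoke the interior comparison estimate built into Definition~\ref{asy}(iv). Fix any compact $K\subset B_1(0)$ and set $\rho_K:=\sup_{x\in K}|x|<1$; choose $R\in(\rho_K,1)$. By the Korevaar--Schoen Dirichlet theory for harmonic maps into NPC spaces, there exists a unique harmonic map $w_i:(B_R(0),g_i)\to\overline{\bf H}$ with boundary trace $v_{\sigma_i}|_{\partial B_R(0)}$; by Theorem~\ref{cont} each $w_i$ is locally Lipschitz on $B_R(0)$.

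Next I verify the boundedness hypothesis of Definition~\ref{asy}(iv). Because $\{v_{\sigma_i}\}$ is asymptotically harmonic with $v_{\sigma_i}(0)=P_0$ and has uniformly bounded energy on $B_1(0)$, Theorem~\ref{cont} supplies a uniform local Lipschitz constant, so there is a constant $d>0$, independent of $i$, with $\sup_{B_R(0)}d_{\overline{\bf H}}(v_{\sigma_i},P_0)\leq d$. Since $d_{\overline{\bf H}}^2(\cdot,P_0)$ is convex on the NPC space $\overline{\bf H}$, its composition with the harmonic map $w_i$ is subharmonic, and the maximum principle yields
\[
\sup_{B_R(0)}d_{\overline{\bf H}}(w_i,P_0)\leq \sup_{\partial B_R(0)}d_{\overline{\bf H}}(v_{\sigma_i},P_0)\leq d.
\]

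Now apply Definition~\ref{asy}(iv) with $\vartheta=R$ and $r=\rho_K/R\in(0,1)$. Since $w_i$ and $v_{\sigma_i}$ share boundary values on $\partial B_R(0)$, the boundary integral vanishes identically and one obtains
\[
\sup_{K}d_{\overline{\bf H}}^2(v_{\sigma_i},w_i)\leq \sup_{B_{rR}(0)}d_{\overline{\bf H}}^2(v_{\sigma_i},w_i)\leq c_0\,\sigma_i^2 R^3 \;\xrightarrow[i\to\infty]{}\;0,
\]
which is precisely the desired approximation on $K$. The decay is driven by the $\sigma_i^2$ error term in the asymptotic harmonicity; the constant $c_0=c_0(R,r,d)$ is finite and harmless because $R$ is fixed strictly below $1$.

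The remaining (mild) technical point is to exhibit a \emph{single} sequence $\{w_i\}$ that works for every compact $K$ simultaneously. I handle this by fixing an exhaustion $K_k\uparrow B_1(0)$ with radii $R_k\uparrow 1$ satisfying $K_k\subset B_{R_k}(0)$, constructing $w_{i,k}$ as above on $B_{R_k}(0)$, and taking $w_i:=w_{i,k(i)}$ for a sufficiently slowly growing index $k(i)\to\infty$ so that the estimate above, with $R=R_{k(i)}$ and the associated $c_0$, still forces $\sup_{K_{k(i)}}d_{\overline{\bf H}}(v_{\sigma_i},w_i)\to 0$. The main (and only real) obstacle is precisely controlling the growth of $c_0(R_{k(i)},\cdot,d)$ as $R_{k(i)}\to 1$; because $c_0$ is finite for each fixed radius and the $\sigma_i^2$ factor decays, choosing $k(i)$ to grow slowly enough relative to how fast $c_0(R_{k(i)},\cdot,d)\sigma_i^2\to 0$ closes the argument.
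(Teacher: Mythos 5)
Your proposal is correct and follows essentially the same route as the paper: take $w_i$ to be the Korevaar--Schoen harmonic extension of $v_{\sigma_i}$'s boundary values on a ball, verify the uniform bound $\sup d_{\overline{\bf H}}(w_i,P_0)\leq d$ required by Definition~\ref{asy}(iv), and apply (iv) with $\vartheta$ equal to the radius of that ball so that the boundary integral vanishes and only the $c_0\sigma_i^2\vartheta^3$ term survives. The only differences are cosmetic: you verify the boundedness hypothesis via convexity of $d^2_{\overline{\bf H}}(\cdot,P_0)$ and the maximum principle rather than via the energy bound of Definition~\ref{asy}(ii) plus a triangle inequality as the paper does, and your diagonal argument over an exhaustion is in fact a more careful treatment of the ``one sequence for all compact $K$'' point, which the paper dispatches with a ``without loss of generality $K=B_{3/4}(0)$.''
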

\begin{proof}
Recall that $v_{\sigma_i} \to v_*$ locally uniformly in pullback sense. Let $K \subset \subset B_1(0) \text{ and } w_i:K \to \overline{\bf H}$ be the harmonic map such that $w_i|_{\partial K}=v_{\sigma_i}|_{\partial K}.$ Without loss of generality, let
\[
K=B_{\frac{3}{4}}(0), \ R=\vartheta=\frac{3}{4}, \ r=\frac{2}{3}.
\]
By Definition \ref{asy}(ii), there exists constant $E_0 >0$ such that 
\[
E^{w_i}\left(\frac{3}{4}\right) \leq E^{v_{\sigma_i}}\left(\frac{3}{4}\right) \leq \left(\frac{3}{4}\right)^nE_0 < \infty.
\]
Then, for a fixed $z_0 \in \partial B_{\frac{3}{4}}(0)$ and any $x \in B_{\frac{3}{4}}(0),$
\begin{center}
    $d_{\overline{\bf H}}(w_i(x),w_i(z_0))$ is uniformly bounded on $B_{\frac{3}{4}}(0).$
\end{center}
Combined with Definition \ref{asy}(iii), for $i$ large sufficiently, for any $x \in B_{\frac{3}{4}}(0),$
\begin{eqnarray*}
    d_{\overline{\bf H}}(w_i(x),P_0) &\leq& d_{\overline{\bf H}}(w_i(x),w_i(z_0))+d_{\overline{\bf H}}(w_i(z_0),P_0)\\
    &=& d_{\overline{\bf H}}(w_i(x),w_i(z_0))+d_{\overline{\bf H}}(v_{\sigma_i}(z_0),v_{\sigma_i}(0))\\
    &\leq&c < \infty.
\end{eqnarray*}
Thus, Definition \ref{asy}(iv) implies
    
\[
\lim_{i \to \infty} \, \sup_{B_{\frac{1}{2}}(0)}d_{\overline{\bf H}}(v_{\sigma_i},w_i) =0,
\]
i.e. $\sup d_{\overline{\bf H}}(v_{\sigma_i},w_i) \to 0$ holds in any compact subset of $B_1(0).$
\end{proof}

\begin{lemma} \label{lem:pwfnc}
Let ${v_{\sigma_i}}$ and non-constant limit map $v_*$ be as above, then $v_*$ is piecewise a function.
\end{lemma}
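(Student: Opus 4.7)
The plan is to adapt the argument of Lemma \ref{lem:pwfnc0} to the asymptotically harmonic setting. The essential obstruction is that $v_{\sigma_i}$ is no longer harmonic, so we cannot invoke the Euler--Lagrange equation
\[
v_{\sigma_i}^{\rho} \Delta v_{\sigma_i}^{\rho} = 3 (v_{\sigma_i}^{\rho})^6 |\nabla v_{\sigma_i}^{\phi}|^2
\]
directly. I would work instead with the sequence $\{w_i\}$ of approximating harmonic maps provided by Lemma \ref{re}, which satisfies $\sup_K d_{\overline{\bf H}}(v_{\sigma_i}, w_i) \to 0$ on every compact $K \subset B_1(0)$, and which by the construction in the proof of that lemma has uniformly bounded energy and uniformly bounded image.

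First I would fix a connected component $\Omega_0$ of $\{x \in B_1(0) : v_*(x) \neq v_*(0)\}$, choose a basepoint $x_{\Omega_0} \in \Omega_0$, and take an arbitrary compact subset $K$ of $\Omega_0$ containing $x_{\Omega_0}$. Since $v_{\sigma_i} \to v_*$ in the pullback sense, the values $v_{\sigma_i}^{\rho}(\cdot)=d_{\overline{\bf H}}(v_{\sigma_i}(\cdot),P_0)$ are bounded away from zero on $K$ for $i$ large, and by (\ref{approx}) the same holds for $w_i^{\rho}$. Because $w_i$ is harmonic with uniformly bounded energy, the interior Lipschitz bound of Theorem \ref{cont} (used as in Lemma \ref{lem:pwfnc0}) yields a constant $C>0$ with
\[
|\nabla w_i^{\rho}|(x) \leq C, \qquad (w_i^{\rho})^3 |\nabla w_i^{\phi}|(x) \leq C
\]
on $K$. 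Combining the lower bound on $w_i^{\rho}$ with the second estimate makes $w_i^{\phi}$ uniformly Lipschitz on $K$; after subtracting $w_i^{\phi}(x_{\Omega_0})$, Arzelà--Ascoli plus a compact exhaustion and diagonalization extract a subsequence so that $(w_i^{\rho},\, w_i^{\phi}-w_i^{\phi}(x_{\Omega_0}))$ converges locally uniformly on $\Omega_0$ to a pair $(f,g)$, where $f = d_A(v_*,v_*(0))|_{\Omega_0}$ thanks to (\ref{approx}) and the pullback convergence. Elliptic regularity for the harmonic sequence upgrades this convergence to $C^{\infty}$ inside $\Omega_0$.

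Next, since each $w_i$ is genuinely harmonic, it satisfies the displayed Euler--Lagrange equation; passing to the limit gives $f\Delta f = 3 f^6 |\nabla g|^2$ on $\Omega_0$. The homogeneity of $v_*$ of order $\alpha$ transfers to $f$, so in polar coordinates $f(r,\theta) = r^{\alpha} F(\theta)$. Substituting, exactly as in Lemma \ref{lem:pwfnc0}, yields
\[
r^{2\alpha - 2}\bigl(\alpha^2 F(\theta) + \Delta_{\theta} F\bigr) = 3 r^{6\alpha} F^5(\theta)\,|\nabla g|^2 ,
\]
and the mismatch of the $r$-exponents on the two sides forces $|\nabla g|\equiv 0$; combined with $g(x_{\Omega_0})=0$ this gives $g\equiv 0$ on $\Omega_0$. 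Consequently $w_i^{\phi}-w_i^{\phi}(x_{\Omega_0})\to 0$ locally uniformly, and by (\ref{approx}) the same holds for $v_{\sigma_i}^{\phi}-v_{\sigma_i}^{\phi}(x_{\Omega_0})$. Thus the pullback pseudodistance of $v_*|_{\Omega_0}$ agrees with that of $f=d_A(v_*,v_*(0))|_{\Omega_0}$, which is the definition of piecewise a function.

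The main obstacle is exactly the one Lemma \ref{re} resolves: transferring the analytic information available only for a harmonic sequence to the non-harmonic $v_{\sigma_i}$. Once the approximating maps $w_i$ are available and shown to share both the pullback limit and the quantitative Lipschitz and ellipticity bounds of a harmonic sequence, the homogeneity/degree-counting argument from Lemma \ref{lem:pwfnc0} carries over with essentially no change.
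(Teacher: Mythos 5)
Your proposal is correct and follows essentially the same route as the paper: the paper likewise reduces to the approximating harmonic maps $w_i$ of Lemma \ref{re}, applies the argument of Lemma \ref{lem:pwfnc0} to the harmonic sequence $\{w_i\}$ to get $(w_i^{\rho},\,w_i^{\phi}-w_i^{\phi}(x_{\Omega_0}))\to(f,0)$, and transfers the conclusion to $v_{\sigma_i}$ via the uniform closeness (\ref{approx}). The only difference is that you unpack the application of Lemma \ref{lem:pwfnc0} (Lipschitz bounds, Arzelà--Ascoli, Euler--Lagrange, degree counting) where the paper simply cites it.
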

\begin{proof}
    Let $B_r(0) \text{ where } r \in (0,1).$ From Lemma \ref{re}, we have the sequence $\{v_{\sigma_i} |_{B_r(0)} \to \overline{\bf H}\}$ and the sequence $\{w_i|_{B_r(0)} \to \overline{\bf H}\}$ of approximating harmonic maps with \[
    \sup_{B_r(0)} d^2_{\overline{\bf H}}(v_{\sigma_i},w_i) \to 0.\]
    Lemma \ref{lem:pwfnc0} implies that
    $(w_i^{\rho},w_i^{\phi}-w_i^{\phi}(x_{\Omega_0}))$ converges locally uniformly in pullback sense to $(f,0)=d(w_*,w_*(0))$ in connected component $\Omega_0.$ Lemma \ref{re} and \cite[Lemma 4.10]{1} implies that in $\Omega_0 \cap B_r(0),$    
    \begin{center}
        $(v_{\sigma_i}^{\rho},v_{\sigma_i}^{\phi}-v^{\phi}_{\sigma_i}(x_{\Omega_0})) \to (d(v_*,v_*(0)),0)$ locally uniformly in pullback sense. 
    \end{center}
    By Definition \ref{pw0}, we conclude $v_*:B_1(0) \to \overline{\bf H}_*$ is piecewise a function.
\end{proof}

\begin{remark}
    Since the harmonic homogeneous pullback limit $v_*$ is piecewise a function, Lemmas \ref{geod0}, \ref{H_A} and \ref{|A|} still hold for $v_*.$
\end{remark}

\section{Appendix III: Constant Pullback Limit $v_*$}
Let $\tilde{v}_{\sigma}: (B_1(0),g) \to \overline{\bf H}^{k-j}$ be the alternative blow-up map at $x_0=0$ defined in section \ref{sec:blowup2}. By construction, 
\[
I^{\tilde{v}_{\sigma}}(1):=\int_{\partial B_1(0)} d^2(\tilde{v}_{\sigma},\tilde{v}_{\sigma}(0)) d\Sigma=1.
\]
Notice that the energy of $\tilde{v}_{\sigma}$ is bounded: for $\sigma >0$ sufficiently small,
\begin{eqnarray*}
    E^{\tilde{v}_{\sigma}}(1) &=& \int_{B_1(0)} \frac{\sigma^{n-1}}{I^{v}(\sigma)} |\nabla v(\sigma x)|^2 \sigma^2 d\mu
    \\
    & = & (I^{v}(\sigma))^{-1} \sigma^{n+1} \int_{B_{\sigma}(0)} |\nabla v(x)|^2 \sigma^{-n} d\mu
    \\
    & = & \frac{\sigma E^{v}(\sigma)}{I^{v}(\sigma)} \leq 2\, Ord^v (x_0) = 2 \beta.
\end{eqnarray*}
In section \ref{sec:blowup2}, we have the subsequence of blow-up component maps $\{\hat{v}_{\sigma_i}=\tilde{v}_{\sigma_i}^{\eta_0}:(B_1(0),g) \to \overline{\bf H} \}$ converging to the non-constant homogeneous harmonic limit map $\hat{v}_*=\tilde{v}^{\eta_0}_*:(B_1(0),g) \to (\overline{\bf H}_*,d)$ locally uniformly in pullback sense in that
\[
d_{\overline{\bf H}}(\hat{v}_{\sigma_i}(\cdot),\hat{v}_{\sigma_i}(\cdot)) \to d(\hat{v}_*(\cdot),\hat{v}_*({\cdot})) \text{ in compact subsets of $B_1(0)$.}
\]

\begin{lemma}\label{re2}
Let $\{\hat{v}_{\sigma_i}\}$ be the blow-up sequence and non-constant limit $\hat{v}_*$ be as above, then there exists a sequence $\{\hat{w}_i\}$ of approximating harmonic maps such that in any compact subset $K$ of $B_1(0),$
\begin{equation}\label{approx2}
   \lim_{i \to \infty} \sup_K d_{\overline{\bf H}}(\hat{v}_{\sigma_i},\hat{w}_i)=0. 
\end{equation}
\end{lemma}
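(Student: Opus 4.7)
The plan is to mirror the proof of Lemma \ref{re} essentially verbatim, with $v_{\sigma_i}$ replaced by $\hat v_{\sigma_i}$ and $v_*$ by $\hat v_*$. The essential input is that by \cite[Lemma 4.32]{1} the sequence of alternative blow-up maps $\{\tilde v_{\sigma_i}: B_1(0) \to \overline{\bf H}^{k-j}\}$ is a sequence of asymptotically harmonic maps in the sense of Definition \ref{asy}, and this property passes to the single coordinate component $\hat v_{\sigma_i} = \tilde v_{\sigma_i}^{\eta_0}$ mapping into one copy of $\overline{\bf H}$; so all four conditions of Definition \ref{asy} are available for $\{\hat v_{\sigma_i}\}$.

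Given a compact set $K \subset B_1(0)$, choose $R \in (0,1)$ and $r \in (0,1)$ so that $K \subset B_{rR}(0)$; following Lemma \ref{re}, the explicit choice $K = B_{1/2}(0)$, $R = \vartheta = \tfrac{3}{4}$, $r = \tfrac{2}{3}$ suffices for this illustrative subset, and a general compact $K$ is handled by enlarging $R$ toward $1$. Define $\hat w_i: B_R(0) \to \overline{\bf H}$ to be the harmonic map with $\hat w_i|_{\partial B_R(0)} = \hat v_{\sigma_i}|_{\partial B_R(0)}$. Since $\hat w_i$ is energy minimizing among maps with these boundary values, Definition \ref{asy}(ii) yields $E^{\hat w_i}(R) \leq E^{\hat v_{\sigma_i}}(R) \leq R^n E_0$, and then \cite[Theorem 2.4.6]{2} produces a uniform oscillation bound for $\hat w_i$ on $B_R(0)$ relative to any fixed point $z_0 \in \partial B_R(0)$. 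Because $\hat w_i(z_0) = \hat v_{\sigma_i}(z_0)$ and pullback convergence $\hat v_{\sigma_i} \to \hat v_*$ controls $d_{\overline{\bf H}}(\hat v_{\sigma_i}(z_0),P_0) = d_{\overline{\bf H}}(\hat v_{\sigma_i}(z_0), \hat v_{\sigma_i}(0))$ for $i$ large, the triangle inequality gives a uniform constant $d > 0$ with $\sup_{B_R(0)} d_{\overline{\bf H}}(\hat w_i, P_0) \leq d$.

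With this uniform bound established, invoke Definition \ref{asy}(iv) for the harmonic map $\hat w_i$ at parameter $\vartheta = R$. Since $\hat v_{\sigma_i}$ and $\hat w_i$ coincide on $\partial B_R(0)$, the boundary integral in the key inequality is identically zero, so
\[
\sup_{B_{rR}(0)} d_{\overline{\bf H}}^2(\hat v_{\sigma_i}, \hat w_i) \;\leq\; c_0 \sigma_i^2 R^3 \;\longrightarrow\; 0
\]
as $i \to \infty$. Because $K \subset B_{rR}(0)$, this gives the desired uniform convergence on $K$.

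The argument is not really obstructed anywhere — the only non-routine point is verifying the hypothesis $\sup_{B_R(0)} d_{\overline{\bf H}}(\hat w_i, P_0) \leq d$ required by Definition \ref{asy}(iv), and this is handled by combining the energy bound from (ii), the interior Lipschitz estimate, and the pullback convergence of $\hat v_{\sigma_i}$ at the single boundary point $z_0$. Once that bound is in place, the vanishing of the boundary integral (by construction of $\hat w_i$) together with $\sigma_i \to 0$ immediately yields the conclusion.
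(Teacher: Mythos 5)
Your proposal is correct and is essentially the paper's own proof: the paper disposes of Lemma \ref{re2} by declaring it "same as the proof of Lemma \ref{re}," and you have reproduced exactly that argument (energy bound from Definition \ref{asy}(ii), uniform bound on $d_{\overline{\bf H}}(\hat w_i,P_0)$ via the interior estimate plus pullback convergence at a boundary point, then Definition \ref{asy}(iv) with the vanishing boundary integral) with $v_{\sigma_i},w_i$ replaced by $\hat v_{\sigma_i},\hat w_i$. The only addition is your remark on handling a general compact $K$ by enlarging $R$, which is a harmless refinement.
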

\begin{proof}
Same as the proof in Lemma \ref{re}.
\end{proof}

\begin{lemma} \label{lem:pwfnc2}
Let ${\hat{v}_{\sigma_i}}$ and non-constant limit map $\hat{v}_*$ be as above, then $\hat{v}_*$ is piecewise a function.
\end{lemma}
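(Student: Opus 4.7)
The plan is to follow the proof of Lemma~\ref{lem:pwfnc} verbatim, with $v_{\sigma_i}, v_*, w_i$ replaced by their hat-decorated counterparts $\hat{v}_{\sigma_i}, \hat{v}_*, \hat{w}_i$. The only substantive ingredient needed is Lemma~\ref{re2}, which supplies a sequence of genuinely harmonic maps $\hat{w}_i$ uniformly close to the asymptotically harmonic sequence $\{\hat{v}_{\sigma_i}\}$ on each compact subset of $B_1(0)$.

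First I would fix $r \in (0,1)$, restrict attention to $B_r(0)$, and apply Lemma~\ref{re2} to obtain harmonic maps $\hat{w}_i : B_r(0) \to \overline{\bf H}$ satisfying
\[
\sup_{B_r(0)} d_{\overline{\bf H}}^2(\hat{v}_{\sigma_i}, \hat{w}_i) \to 0.
\]
Because the $\hat{w}_i$ are harmonic into the smooth manifold ${\bf H}$, Lemma~\ref{lem:pwfnc0} applies directly and yields, on each connected component $\Omega_0$ of $B_1(0) \setminus \{x : \hat{v}_*(x) = \hat{v}_*(0)\}$, that $(\hat{w}_i^{\rho}, \hat{w}_i^{\phi} - \hat{w}_i^{\phi}(x_{\Omega_0}))$ converges locally uniformly in the pullback sense to $(d(\hat{v}_*, \hat{v}_*(0)), 0)$.

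Next I would combine this convergence with Lemma~\ref{re2} and \cite[Lemma 4.10]{1} to transfer the pullback convergence from $\{\hat{w}_i\}$ to $\{\hat{v}_{\sigma_i}\}$: the uniform distance control on compact sets ensures that the pullback pseudodistances of the two sequences share a common limit, so $(\hat{v}_{\sigma_i}^{\rho}, \hat{v}_{\sigma_i}^{\phi} - \hat{v}_{\sigma_i}^{\phi}(x_{\Omega_0}))$ also converges locally uniformly in the pullback sense to $(d(\hat{v}_*, \hat{v}_*(0)), 0)$ on $\Omega_0 \cap B_r(0)$. The vanishing of the second coordinate in this limit is exactly the condition in Definition~\ref{pw0}, so $\hat{v}_*$ is piecewise a function. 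The only point requiring care is ensuring that Lemma~\ref{re2} indeed delivers the uniform closeness even though $\{\hat{v}_{\sigma_i}\}$ is built using the modified scaling factor $\lambda^v$ rather than $\lambda^u$; this is precisely the role of the alternative blow-up in Section~\ref{sec:blowup2}, and once Lemma~\ref{re2} is accepted the rest of the argument is a direct translation of the Appendix~II proof.
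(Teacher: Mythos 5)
Your proposal matches the paper's argument: the paper's own proof of Lemma~\ref{lem:pwfnc2} is exactly the statement that one repeats the proof of Lemma~\ref{lem:pwfnc}, substituting $\hat{v}_{\sigma_i}, \hat{v}_*, \hat{w}_i$ (via Lemma~\ref{re2}) for $v_{\sigma_i}, v_*, w_i$. Your elaboration of that substitution, including the transfer of pullback convergence from the approximating harmonic maps $\hat{w}_i$ back to $\hat{v}_{\sigma_i}$, is precisely the intended route.
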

\begin{proof}
Same as the proof in Lemma \ref{lem:pwfnc} by replacing $v_{\sigma_i}, v_*,w_i,$ and $w_*$ with $\hat{v}_{\sigma_i}, \hat{v}_*,\hat{w}_i,$ and  $\hat{w}_*$.
\end{proof}

\begin{remark}
    The fact that non-constant homogeneous harmonic pullback limit $\hat{v}_*$ is piecewise a function implies that Lemmas \ref{geod0}, \ref{H_A} and \ref{|A|} still hold with $u_*, u_{\sigma_i}$ replaced by $\hat{v}_*$ and $\hat{v}_{\sigma_i}.$
\end{remark}

\end{document}